\newcommand{\hmu}{\widehat \mu}
\DeclareMathAlphabet{\mathpzc}{OT1}{pzc}{m}{it}
\newtheorem{condition}[theorem]{Condition}
\newtheorem{example}[theorem]{Example}
\newtheorem{remark}[theorem]{Remark}
\numberwithin{equation}{section}
\numberwithin{table}{section}
\numberwithin{figure}{section}
\newcommand{\noi}{\noindent}
\newcommand{\R}{\mathbb{R}}
\newcommand{\cL}{{\cal L}}
\newcommand{\cZ}{{\cal Z}}
\newcommand{\bx}{\mathbf{x}}
\newcommand{\be}{\mathbf{e}}
\newcommand{\bxi}{\boldsymbol{\xi}}
\newcommand{\bn}{\mathbf{n}}
\newcommand{\bv}{\mathbf{v}}
\newcommand{\bw}{\mathbf{w}}
\newcommand{\by}{\mathbf{y}}
\newcommand{\bz}{\mathbf{z}}
\newcommand{\bQ}{\mathbf{Q}}
\newcommand{\supp}{\mathrm{supp}}
\newcommand{\bZ}{\mathbf{Z}}
\newcommand{\re}{{\rm e}}
\newcommand{\ri}{{\rm i}}
\newcommand{\rd}{{\rm d}}
\newcommand{\beq}{\begin{equation}}
\newcommand{\eeq}{\end{equation}}
\newcommand{\beqs}{\begin{equation*}}
\newcommand{\eeqs}{\end{equation*}}
\newcommand{\bit}{\begin{itemize}}
\newcommand{\eit}{\end{itemize}}
\newcommand{\ben}{\begin{enumerate}}
\newcommand{\een}{\end{enumerate}}
\newcommand{\bal}{\begin{align}}
\newcommand{\eal}{\end{align}}
\newcommand{\bals}{\begin{align*}}
\newcommand{\eals}{\end{align*}}
\newcommand{\bse}{\begin{subequations}}
\newcommand{\ese}{\end{subequations}}
\newcommand{\bpr}{\begin{proposition}}
\newcommand{\epr}{\end{proposition}}
\newcommand{\bre}{\begin{remark}}
\newcommand{\ere}{\end{remark}}
\newcommand{\bpf}{\begin{proof}}
\newcommand{\epf}{\end{proof}}
\newcommand{\ble}{\begin{lemma}}
\newcommand{\ele}{\end{lemma}}
\newcommand{\bco}{\begin{corollary}}
\newcommand{\eco}{\end{corollary}}
\newcommand{\bex}{\begin{example}}
\newcommand{\eex}{\end{example}}
\newcommand{\bth}{\begin{theorem}}
\newcommand{\enth}{\end{theorem}}
\newcommand{\Rea}{\mathbb{R}}
\newcommand{\Com}{\mathbb{C}}
\newcommand{\esssup}{\mathop{{\rm ess} \sup}}
\newcommand{\GR}{{\Gamma_R}}
\newcommand{\eps}{\varepsilon}
\newcommand{\pdiff}[2]{\frac{\partial #1}{\partial #2}}
\newcommand{\nus}{|u|^2}
\newcommand{\ngus}{|\nabla u|^2}
\newcommand{\gu}{\nabla u}
\newcommand{\nvs}{|v|^2}
\newcommand{\gv}{\nabla v}
\newcommand{\vb}{\overline{v}}
\newcommand{\gvb}{\overline{\nabla v}}
\newcommand{\tendi}{\rightarrow \infty}
\newcommand{\tendo}{\rightarrow 0}
\def\XXint#1#2#3{{\setbox0=\hbox{$#1{#2#3}{\int}$}
     \vcenter{\hbox{$#2#3$}}\kern-.5\wd0}}
\definecolor{myblue}{rgb}{0,0,0.5}
\definecolor{myred}{rgb}{0.6,0,0}
\newcommand*{\N}[1]{\left\|#1\right\|}
\newcommand{\tfa}{\text{ for all }}
\newcommand{\tfor}{\text{ for }}
\newcommand{\tas}{\text{ as }}
\newcommand{\tand}{\text{ and }}
\newcommand{\tst}{\text{ such that }}
\newcommand{\vertiii}[1]{{\left\vert\kern-0.25ex\left\vert\kern-0.25ex\left\vert #1
    \right\vert\kern-0.25ex\right\vert\kern-0.25ex\right\vert}}
\newcommand{\DOmegabar}{C^\infty(\overline{B_R})}
\newcommand{\GammaR}{\GR}
\newcommand{\domain}{{D}}
\newcommand{\domaingen}{{D}}
\newcommand{\domainin}{{\domain_{\rm in}}}
\newcommand{\domainout}{\domain_{\rm out}}
\newcommand{\var}{{n}}
\newcommand{\varmin}{{n_{\min}}}
\newcommand{\varmax}{{n_{\max}}}
\newcommand{\opL}{\cL}
\newcommand{\hatx}{\widehat{\bx}}
\newcommand{\ub}{\overline{u}}
\newcommand{\fout}{{f_{\rm out}}}
\newcommand{\SPD}{{\mathsf{SPD}}}
\newcommand{\Sym}{{\mathsf{Sym}}}
\newcommand*{\conj}[1]{\overline{#1}}
\newcommand{\mymatrix}[1]{\mathsf{#1}}
\newcommand{\MA}{{\mymatrix{A}}}\newcommand{\MB}{{\mymatrix{B}}}\newcommand{\MC}{{\mymatrix{C}}}
\newcommand{\MI}{{\mymatrix{I}}}
\newcommand{\MM}{{\mymatrix{M}}}
\newcommand{\sesqui}{b}
\newcommand{\DtN}{{\rm DtN}_k}
\definecolor{amcol}{rgb}{0.8,0,0}
\definecolor{escol}{rgb}{0,0,0.8}
\definecolor{estcol}{rgb}{0,0.5,0}
\definecolor{esnewcol}{rgb}{0,0.5,0}
\newcommand{\ZZZ}{\mathbb Z}
\newcommand{\RRR}{\mathbb R}
\newcommand{\CCC}{\mathbb C}
\newcommand{\xx}{\bx}
\newcommand{\yy}{\by}
\newcommand{\al}{{\boldsymbol \alpha}}
\newcommand{\bt}{{\boldsymbol \beta}}
\newcommand{\LL}{\boldsymbol L}
\newcommand{\HH}{\boldsymbol H}
\newcommand{\CC}{\boldsymbol C}
\newcommand{\per}{{\rm per}}
\newcommand{\ddiv}{\operatorname{div}}
\newcommand{\ccurl}{\boldsymbol{\operatorname{curl}}}
\newcommand{\pd}[2]{\frac{\partial #1}{\partial #2}}
\newcommand{\grad}{\nabla}
\renewcommand{\div}{\grad \cdot}
\newcommand{\curl}{\grad \times}
\newcommand{\gradx}{\grad_{\xx}}
\newcommand{\curlx}{\gradx \times}
\newcommand{\grady}{\grad_{\yy}}
\newcommand{\divy}{\grady \cdot}
\newcommand{\curly}{\grady \times}
\newcommand{\scurl}{\operatorname{curl}}
\newcommand{\vcurl}{\boldsymbol{\scurl}}
\newcommand{\eq}{:=}
\newcommand{\pphi}{\boldsymbol \phi}
\newcommand{\ppsi}{\boldsymbol \psi}
\newcommand{\qq}{\boldsymbol q}
\newcommand{\pp}{\boldsymbol p}
\newcommand{\ee}{\boldsymbol e}
\newcommand{\vv}{{\boldsymbol v}}
\newcommand{\hM}{\widehat{\mymatrix{M}}}
\newcommand{\Cemb}{C_{{\rm emb},s}}
\newcommand{\Din}{{D_{\rm in}}}
\newcommand{\Dout}{{D_{\rm out}}}
\newcommand{\nnu}{\boldsymbol \nu}
\newcommand{\EE}{E}
\newcommand{\hn}{\widehat n}
\newcommand{\hMA}{\widehat \MA}
\newcommand{\hMB}{\widehat \MB}
\newcommand{\hMC}{\widehat \MC}
\newcommand{\nH}{n^H}
\newcommand{\MAH}{\MA^H}
\newcommand{\hchi}{\widehat \chi}
\newcommand{\bin }{b_{\rm in} }
\newcommand{\bout}{b_{\rm out}}
\newcommand{\Clayer}{C_{\rm layer}(kR, kR_0,n_{\min})}
\newcommand{\Clayershort}{C_{\rm layer}}
\newcommand{\Clayerh}{C_{\rm layer}(kR, kR_0,\hn_{\min})}
\newcommand{\CsH}{\Csol(\MAH,\nH, k, R,R_0)}
\newcommand{\CsHshort}{C_{{\rm sol}, H}}
\newcommand{\Cse}{\Csol(\MA_\eps, n_\eps, k, R, R_0)}
\newcommand{\Cs}{C_{\rm sol}}
\newcommand{\PY}{\mathscr P}
\newcommand{\Csol}{C_{\rm sol}}
\newcommand{\Cwave}{C_{\rm wave}}
\newcommand{\Cone}{C_1}
\newcommand{\Ctwonew}{C_2}
\newcommand{\Cthreenew}{C_3}
\newcommand{\Cfournew}{C_4}
\newcommand{\Cfivenew}{C_5}
\newcommand{\CTR}{\widetilde{C}_{\rm DtN}}
\newcommand{\CPF}{C_{\rm PF}}
\title{Scattering by finely-layered obstacles:~frequency-explicit bounds and homogenization}
\author{%
T.~Chaumont-Frelet\thanks{
Inria, 2004 Route des Lucioles, 06902 Valbonne, France, and  
Laboratoire J.A.~Dieudonn\'e, Parc Valrose, 28 Avenue Valrose, 06000 Nice, France
(\email{theophile.chaumont@inria.fr})
}
\and
E.~A.~Spence\thanks{
Department of Mathematical Sciences, University of Bath, Bath, BA2 7AY, UK
(\email{E.A.Spence@bath.ac.uk})
}}
\begin{document}

\maketitle

\begin{abstract}
We consider the scalar Helmholtz equation with variable, discontinuous coefficients,
modelling transmission of acoustic waves through an anisotropic penetrable obstacle.
We first prove a well-posedness result and a frequency-explicit bound on the solution
operator, with both valid for sufficiently-large frequency and for a class of coefficients
that satisfy certain monotonicity conditions in one spatial direction, and are only assumed
to be bounded (i.e., $L^\infty$) in the other spatial directions. This class of coefficients
therefore includes coefficients modelling transmission by penetrable obstacles with a
(potentially large) number of layers (in 2-d) or fibres (in 3-d). Importantly, the
frequency-explicit bound holds uniformly for all coefficients in this class; this 
uniformity allows us to consider highly-oscillatory coefficients and study the limiting
behaviour when the period of oscillations goes to zero. In particular, we bound the $H^1$
error committed by the first-order bulk correction to the homogenized
transmission problem, with this bound explicit in both the period of oscillations of
the coefficients and the frequency of the Helmholtz equation; to our knowledge,
this is the first homogenization result for the Helmholtz equation that is explicit
in these two quantities and valid without the assumption that the frequency is small.
\end{abstract}

\begin{keywords}
Helmholtz equation, high frequency, transmission problem, homogenization
\end{keywords}

\begin{AMS}
35B27, 35J05, 35P25, 78M40
\end{AMS}

\section{Introduction}\label{sec:1}

\subsection{Definition of the Helmholtz transmission problem}\label{sec:notation}

For $R > 0$, let $B_R:=\{\bx \in \RRR^d \; | \; |\bx| < R\}$ and let
$\Gamma_R := \partial B_R = \{\bx \in \RRR^d \; | \; |\bx|=R\}$. 
Let $\DtN : H^{1/2}(\Gamma_R) \rightarrow H^{-1/2}(\Gamma_R)$ be the Dirichlet-to-Neumann map for the equation $\Delta u+k^2 u=0$
posed in the exterior of $B_R$ with the Sommerfeld radiation condition
\beq\label{eq:src}
\partial_r u(\bx) - \ri k u(\bx) = o \big(r^{-(d-1)/2}\big) \quad\tas r:=|\bx|\tendi,
\eeq
uniformly in $\hatx:= \bx/r$. The definition of $\DtN $
in terms of Hankel functions and polar coordinates ($d=2$)/spherical polar coordinates
($d=3$) is given in, e.g., 
\cite[Equations 3.7 and 3.10]{MeSa:10}. 

Let $\Sym$ denote the set of $d\times d$ real, symmetric matrices and let $\SPD$ denote the
set of $d\times d$ real, symmetric, positive-definite matrices. Given $\MM_1,\MM_2\in\SPD$
we write $\MM_1\preceq \MM_2$ to indicate inequality in the sense of quadratic forms, namely
$\MM_1\bv\cdot \conj\bv\le \MM_2\bv\cdot \conj\bv$ for all $\bv\in\Com^d$. For a non-negative
scalar $m$ and $\MM\in\SPD$ we write $m\preceq \MM$ if $m\MI\preceq\MM$, and $\MM\preceq m$
if $\MM\preceq m\MI$, where $\MI$ is the identity matrix.

We now give the weak form of the variable coefficient Helmholtz equation
\beq\label{eq:Helmholtz}
\nabla\cdot(\MA \nabla u) + k^2 n u =-f.
\eeq

\begin{definition}[Helmholtz transmission problem]\label{def:transmission}
Given $R > R_0 > 0$, $F\in (H^1(B_R))'$,
\bit
\item 
$n\in L^\infty(\Rea^d,\Rea)$ with $\supp(1-n)\subset B_{R_0}$ and such that
\beq\label{eq:nlimits}
0 < \varmin \leq \var(\bx)\leq\varmax < \infty\,\, \text{ for almost every } \bx \in \Rea^d,
\eeq
\item $\MA \in L^\infty (\Rea^d , \SPD)$ with $\supp(\MI-\MA)\subset B_{R_0}$ and such that
\beq\label{eq:Alimits}
0< A_{\min}\preceq \MA(\bx)\preceq A_{\max}<\infty\quad\text{ for almost every }\bx \in \Rea^d,
\eeq
\eit
and $k>0$, 
we say that $u\in H^1(B_R) $ satisfies the \emph{Helmholtz transmission problem} if 
\beq\label{eq:vf}
\sesqui(u,w)=F(w) \quad \tfa w\in H^1(B_R),
\eeq
where
\beq\label{eq:sesqui}
\sesqui(v,w):= \int_{B_R} 
\Big((\MA \gv)\cdot\overline{\nabla w}
 - k^2 n v\overline{w}\Big) - \big\langle \DtN v,w \big\rangle_{\Gamma_R},
\eeq
where $\langle\cdot,\cdot\rangle_{\Gamma_R}$ is the duality pairing on $\Gamma_R$
that is linear in the first argument and antilinear in the second.
\end{definition}

An important special case of the Helmholtz transmission problem of Definition
\ref{def:transmission} is transmission through a (not necessarily connected) Lipschitz
penetrable obstacle; in this case $\supp(\MI-\MA) = \supp(1-n)= \domainin$, where $\domainin \subset\Rea^d$ is a bounded, Lipschitz open 
set such that $\domainout:=\Rea^d\setminus\overline\domainin$ is connected.

\subsection{Main result I: existence, uniqueness, and a priori bound on particular Helmholtz transmission problem}\label{sec:PDE}

\subsubsection{Statement of the result}

For $\bx\in\Rea^d$, $\bx=(\bx',x_d)$ with $\bx'\in\Rea^{d-1}$ and $x_d\in \Rea$.

\begin{condition}\label{cond:1}
$\MA$ and $n$ satisfy the conditions in Definition \ref{def:transmission}. 
Additionally, $\MA$ is such that 
\beq\label{eq:monotoneA}
x_d \Big[\MA(\bx+h\be_d) -\MA(\bx)\Big] \preceq 0
\quad
\text{ for almost every } h \geq 0 \text{ and almost every } \bx \in \Rea^d.
\eeq
and $(\MA)_{d\ell}(\bx)=(\MA)_{\ell d}(\bx)=0$ for $\ell= 1,\ldots, d-1$ and for all $\bx\in \Rea^d$, 
and $n$ is such that
\beq\label{eq:monotonen}
x_d \Big[n(\bx+h\be_d) -n(\bx)\Big] \geq 0
\quad
\text{ for almost every } h \geq 0 \text{ and almost every } \bx \in \Rea^d.
\eeq
\end{condition}

\noi We make three remarks:
(i) The monotonicity conditions \eqref{eq:monotoneA} and \eqref{eq:monotonen}
(i.e.~that $\MA$ \emph{decreases} with distance from $x_d=0$ and $n$ \emph{increases})
along with the assumptions that $\supp (\MI-\MA)$ and $\supp (1-n)$ are compact imply
that $A_{\min}\geq 1$ and $n_{\max}\leq 1$.
(ii) Condition \ref{cond:1} only imposes constraints on the behaviour of $\MA$ and $n$
in the $x_d$ direction, while the behaviour in the $\bx'$ directions is unconstrained.
(iii) The origin of the condition that $(\MA)_{d\ell}(\bx)=0$ for $\ell=1,\ldots,d-1$, is
discussed in Remark \ref{rem:Adell}.

We are most interested in the following example of $\MA$ and $n$ satisfying Condition \ref{cond:1}.

\begin{example}[Example of $\MA$ and $n$ defined piecewise satisfying Condition \ref{cond:1}]
\label{ex:1}
\bit
\item 
$\domainin$ is as in \S\ref{sec:notation} and such that $\Gamma \cap \{\bx: x_d \geq 0\}$
is the graph of a $C^0$ function $f_+: \bx' \rightarrow \Rea$, and
$\Gamma \cap \{\bx: x_d \leq 0\}$ is the graph of a $C^0$ function $f_-: \bx' \rightarrow \Rea$.
\item $\MA$ is as in Definition \ref{def:transmission} 
and furthermore
\beqs
\MA(\bx)
=
\boldsymbol{1}_{\domainin}(\bx)\,\widetilde{\MA}(\bx) + \boldsymbol{1}_{\domainout}(\bx)\MI,
\eeqs
where $\boldsymbol{1}_D$ denotes the indicator function of a set $D$ and
$\widetilde{\MA} \in L^\infty(\domainin,\SPD)$ satisfies \eqref{eq:monotoneA}
(with $\MA$ replaced by $\widetilde{\MA}$), $(\widetilde{\MA})_{d\ell}(\bx)=0$ for
$\ell= 1,\ldots, d-1$ and for all $\bx\in \domainin$, and $\widetilde{A}_{\min}\geq 1$.
\item $n$ is as in Definition \ref{def:transmission} and furthermore
\beqs
n(\bx) =  \boldsymbol{1}_{\domainin}(\bx) \,\widetilde{n}(\bx) + \boldsymbol{1}_{\domainout}(\bx),
\eeqs
where $\widetilde{n} \in L^\infty(\domainin,\Rea)$ satisfies \eqref{eq:monotoneA}
(with $n$ replaced by $\widetilde{n}$), and $\widetilde{n}_{\max}\leq 1$.
\eit
\end{example}

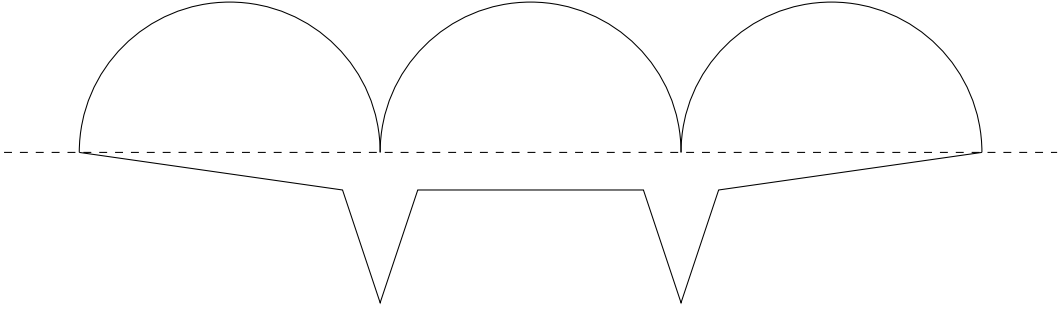
\begin{figure}
\begin{tikzpicture}[scale=2]
\draw ( 0, 0) arc[start angle=0,end angle=180,radius=1cm];
\draw ( 2, 0) arc[start angle=0,end angle=180,radius=1cm];
\draw ( 4, 0) arc[start angle=0,end angle=180,radius=1cm];


\draw[dashed] (-2.5,0) -- (4.5,0);

\draw (-2,0) -- (-0.25,-0.25) -- (0,-1) -- (0.25,-0.25) -- (1.75,-0.25) -- (2,-1) -- (2.25,-0.25) -- (4,0);
\end{tikzpicture}
\caption{An example of a domain $\Din$ satisfying the first bullet point in Example \ref{ex:1}}
\label{figure_domain}
\end{figure}

In Example \ref{ex:1}, if $\Gamma$ is Lipschitz, then the first bullet point can be replaced by
\beq\label{eq:normal}
x_d \be_d \cdot \bn(\bx) \geq 0 \quad\tfa \bx \in \Gamma \tst \bn(\bx) \text{ is defined},
\eeq
and the boundary value problem is then transmission through a Lipschitz penetrable obstacle.

The following theorem bounds the norm of the solution operator of the Helmholtz transmission
problem; i.e., the operator norm of the (linear) mapping $(H^1(B_R))' \ni F \to u \in H^1(B_R)$,
given by
\begin{equation}\label{eq_Csol_def}
\Csol(\MA, n, k, R, R_0) 
\eq \sup_{\substack{F \in (H^{1}_k(B_R))', \,\|F\|_{(H^{1}_k(B_R))'} = 1}} \|u\|_{H^1_k(B_R)},
\end{equation}
where, for a bounded open set $D$, 
\begin{equation} \label{eq:1knorm}
\|u\|^2_{H^1_k(D)} \eq \int_{D} |\grad u|^2+k^2|u|^2 \quad \tand \quad
\|F\|_{(H^{1}_k(D))'}
\eq
\sup_{\substack{v \in H^1(D),\, \|v\|_{H^1_k(D)} = 1}} \big| F(v) \big|.
\end{equation}
Recall that $\Csol$ must grow at least linearly with $k$ as $k\tendi$; indeed, if $\MA=\MI$ and $n=1$, then this follows by considering, e.g., $u(x)= \re^{\ri k x_1} \chi(|x|/R)$ for $\chi \in C^\infty$ supported in $[0,1)$.

\begin{theorem}[Well-posedness under Condition \ref{cond:1}]
\label{main_theorem_stability}
There exists $\Cwave,\Cone>0$ such that the following holds. Given $R>R_0>0$ and $\MA$ and $n$ satisfying Condition \ref{cond:1}, if $kR_0\geq \Cwave$ then the Helmholtz transmission problem of Definition \ref{def:transmission} exists, is unique, and 
\beq\label{main_eq_stability}
\Csol(\MA, n, k, R, R_0)
\leq
\frac{1}{n_{\min}}\Big(1 + 2 (kR)^2(1+ kR_0) \Cone\Big)
=:
\Clayer.
\eeq
\end{theorem}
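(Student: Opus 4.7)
The strategy is to combine a Morawetz/Rellich-type multiplier identity with the Fredholm alternative. The monotonicity conditions \eqref{eq:monotoneA} and \eqref{eq:monotonen} are precisely the \emph{non-trapping} assumptions needed for such a multiplier argument to yield an a priori bound polynomial in $k$. Because the coefficients are only controlled in the $x_d$-direction, the multiplier must be $Zu := x_d\,\partial_d u + \alpha u$ for a suitable real constant $\alpha$ (rather than the full radial $\bx\cdot\nabla u$ used for isotropic non-trapping problems). I would test the weak formulation \eqref{eq:vf} against $w = Zu$ on $B_R$ and then combine the resulting identity with the one obtained from $w=u$.

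In the bulk, integration by parts of $\int_{B_R}(\MA\nabla u)\cdot\overline{\nabla(Zu)}$ produces: (i) positive kinetic contributions of the form $\int_{B_R}\MA\nabla u\cdot\overline{\nabla u}$; (ii) a distributional term $-\int_{B_R} x_d(\partial_d\MA)\nabla u\cdot\overline{\nabla u}$ which is non-negative in the sense of matrix-valued measures by \eqref{eq:monotoneA}, with any jumps of $\MA$ across layers contributing with the correct sign; and (iii) cross-terms between $\partial_d$ and the transverse gradient that vanish identically because $(\MA)_{d\ell}=0$ for $\ell<d$ — this is precisely the role of the off-diagonal assumption. In the same way, $\int_{B_R} k^2 n u\,\overline{Zu}$ produces a non-negative term $\int_{B_R} k^2 x_d(\partial_d n)|u|^2\ge 0$ by \eqref{eq:monotonen}, together with a mass term that combines with the kinetic one. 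The non-trapping contributions thus have the correct sign and may be discarded, leaving an identity that controls $\|u\|^2_{H^1_k(B_R)}$ modulo boundary terms on $\Gamma_R$ and an $L^2$-piece that is absorbed using $n\ge \varmin$ (this is the source of the $1/\varmin$ prefactor in \eqref{main_eq_stability}).

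The boundary contribution on $\Gamma_R$ is a surface integral involving $\langle\DtN u, R\partial_r u\rangle$ and $\langle\DtN u, u\rangle$; the required $k$-explicit sign/boundedness properties of $\DtN$ are classical for the constant-coefficient exterior problem (see, e.g., \cite{MeSa:10,ChMo:08}) and account precisely for the explicit $(kR)^2(1+kR_0)$ factor in \eqref{main_eq_stability}.

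The main technical obstacle will be making the multiplier identity rigorous when $\MA$ and $n$ are merely $L^\infty$ with potentially dense jumps: $\partial_d\MA$ must be interpreted as a non-positive, matrix-valued Radon measure, and an approximation/mollification argument is needed to justify pairing it with $\nabla u\cdot\overline{\nabla u}\in L^1$. Once the a priori bound is established, uniqueness for $F=0$ follows at once; existence and the stated bound \eqref{main_eq_stability} on $\Csol$ then follow from the Fredholm alternative applied to the sesquilinear form \eqref{eq:sesqui}, which is a compact perturbation of a coercive form via the compact embedding $H^1(B_R)\hookrightarrow L^2(B_R)$ and the standard compactness/mapping properties of $\DtN$.
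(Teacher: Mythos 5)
Your proposal has the right overall architecture (multiplier identity + Fredholm + approximation of the $L^\infty$ coefficients by smooth ones), which is indeed the strategy of the paper. However, there is a genuine gap in the heart of the argument: the multiplier you propose, $Zu := x_d\,\partial_d u + \alpha u$, used \emph{globally} on $B_R$, does not close.

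There are two related problems. First, the boundary term on $\Gamma_R$. With your choice, the surface contribution involves $\int_{\Gamma_R}\bQ(u)\cdot\hat\bx$ where $\bQ$ carries the factor $\overline{x_d\partial_d u}$, \emph{not} $\overline{R\,\partial_r u}$. The one-sided inequality for outgoing solutions that makes the boundary term $\leq 0$ (Lemma~\ref{lem:2.1}) is proved for the Morawetz--Ludwig multiplier $\cM_\alpha v = r(\partial_r v - \ri k v + \alpha v/r)$, i.e.\ it requires the vector field to be $\bx$ near $\Gamma_R$ and to include a $-\ri k\beta$ piece. There is no analogous sign result for $\langle \DtN u, x_d\partial_d u\rangle$, and trying to estimate that pairing by brute force only buys the continuity bound \eqref{eq:TR2}, which has the wrong sign. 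This is precisely why the paper uses the transitional vector field of \cite{ChSpGiSm:20},
\begin{equation*}
\bZ(\bx) = \be_d x_d\big(1-\chi(r)\big) + \bx\,\chi(r),
\end{equation*}
which equals $x_d\be_d$ on $\supp(\MI-\MA)\cup\supp(1-n)$ (so the monotonicity assumptions can be exploited there) and equals $\bx$ near $\Gamma_R$ (so Lemma~\ref{lem:2.1} applies), with the cutoff $\chi$ constrained so that the cross-term $x_d\conj{\partial_d u}\,\partial_r u\,\chi'$ can be absorbed.

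Second, even where the sign arguments work, the $x_d\partial_d$ multiplier alone only gives you control of $|\partial_d u|^2$ in the inner region where $\chi=0$; it does \emph{not} directly control the transverse derivatives $|\nabla' u|^2$ or $k^2|u|^2$ there. Your statement that the identity ``controls $\|u\|^2_{H^1_k(B_R)}$ modulo boundary terms'' glosses over this. The paper has to recover the missing pieces by two extra steps: a Poincar\'e--Friedrichs-type inequality that bounds $\int_{B_{R_0}}|u|^2$ in terms of $\int|\partial_d u|^2$ plus a contribution from the annulus where $\chi>0$ (this step is what costs the extra powers of $k$ in \eqref{main_eq_stability}), and Green's identity with the $\DtN$ sign property \eqref{eq:TR} to bound $\int_{B_R}|\nabla u|^2$ by $\int_{B_R}|u|^2$ plus data. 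Without these, the multiplier estimate does not yield the full $H^1_k$-norm on the left. So while the scaffolding of your proposal is correct and matches the paper (Lemma~\ref{lem:Fred}, Lemma~\ref{lem:H1}, Lemma~\ref{lem:basicapprox1}, mollification), the core estimate as you have sketched it would fail; the transition of the vector field from $x_d\be_d$ to $\bx$ and the recovery of the full energy norm via Poincar\'e--Friedrichs and Green's identity are essential and missing.
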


The key point is that, other than $n_{\min}$, all the terms on the right-hand side of
\eqref{main_eq_stability} are independent of $\MA$ and $n$. Explicit expressions for $\Cwave$
and $\Cone$ are available; see Remark \ref{rem:explicit} below. The notation $\Cwave$ is chosen
because this constant determines the minimum number of wavelengths in $B_{R_0}$ for the results of the theorem to hold.

Theorem \ref{main_theorem_stability} is obtained as a corollary of the following result. 

\begin{theorem}[A priori bound under Condition \ref{cond:1} for $L^2$ data]
\label{main_theorem_stability2}
Under the assumptions of Theorem \ref{main_theorem_stability}, if $F(v) = \int_{B_R} f\,\overline{v}$ and $kR_0 \geq \Cwave$, then 
\begin{equation}
\label{eq:resol}
\N{u}_{H^1_k(B_R)} \leq \Cone (1+ kR_0)(kR) R \N{f}_{L^2(B_R)}.
\eeq
\end{theorem}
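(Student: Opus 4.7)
The plan is to prove the $L^2$-data bound \eqref{eq:resol} via a Morawetz--Rellich-type multiplier identity, tailored to the anisotropic monotonicity imposed by Condition \ref{cond:1}. The overall strategy parallels standard $k$-explicit Helmholtz resolvent estimates (as in, e.g., \cite{MoSp:19,GrPeSp:19}), but because $\MA$ and $n$ have no regularity in the transverse directions $\bx'$, the classical radial multiplier $\bx\cdot\nabla u$ cannot be used directly; only a multiplier aligned with $\be_d$ can safely interact with the variable coefficients.

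The first step is to test $\sesqui(u,w)=F(w)$ against a function of the form $w = \alpha u + Z\cdot\nabla u$, where $\alpha\approx (d-1)/2$ and $Z$ is a vector field chosen so that: (i) $Z=\bx$ outside $B_{R_0}$, where $\MA=\MI$ and $n=1$ and the classical Morawetz calculus applies; and (ii) inside $B_{R_0}$, the pairing between $Z$ and the $\bx'$-derivatives of $\MA,n$ is annihilated---this uses the block-diagonality $(\MA)_{d\ell}=0$ for $\ell<d$---so that only the $x_d$-derivatives of $\MA, n$ appear. Taking real and imaginary parts of the resulting identity, and combining with $\sesqui(u,u)=F(u)$ to control the contribution from the imaginary part of the DtN map, yields an inequality of the schematic form
\begin{equation*}
\|u\|_{H^1_k(B_R)}^2 \,\lesssim\, |F(w)| + \mathcal{B}_{\Gamma_R}(u) + \mathcal{C}(\MA,n;u),
\end{equation*}
where $\mathcal{B}_{\Gamma_R}$ collects boundary contributions on $\Gamma_R$ and $\mathcal{C}$ collects coefficient-variation volume terms.

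The second step is to bound $|F(w)|$ and $\mathcal{B}_{\Gamma_R}(u)$. The former gives $|F(w)|\lesssim R \|f\|_{L^2(B_R)}\|u\|_{H^1_k(B_R)}$ via Cauchy--Schwarz, since $|Z|\leq R$ on $B_R$ and the $\alpha u$ term contributes at comparable order once $kR\gtrsim 1$ (ensured by the hypothesis $kR_0\ge C_{\rm wave}$). The latter is handled using well-known $k$-explicit identities and bounds for the free-space Dirichlet-to-Neumann map on $\Gamma_R$ (see, e.g., \cite{ChMo:08,MeSa:10,Ne:01}), which supply the $(kR)$ factor in the final estimate.

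The main obstacle lies in the third step: rigorously controlling $\mathcal{C}(\MA,n;u)$, which formally involves $\int (Z\cdot\nabla\MA)\nabla u\cdot\overline{\nabla u}$ and $\int (Z\cdot\nabla n)\,k^2|u|^2$. For $L^\infty$ coefficients these integrals cannot be interpreted pointwise; however, by construction of $Z$ and by the block-diagonality in Condition \ref{cond:1}, only the $x_d$-derivatives of $\MA, n$ enter, and the monotonicity conditions \eqref{eq:monotoneA}--\eqref{eq:monotonen} supply the favourable signs
\begin{equation*}
-\int_{B_R} x_d(\partial_d \MA)\nabla u \cdot \overline{\nabla u} \ge 0
\qquad\text{and}\qquad
\int_{B_R} x_d(\partial_d n)\,k^2|u|^2 \ge 0,
\end{equation*}
interpreted distributionally (or equivalently as integrals against matrix-/scalar-valued measures of definite sign). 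I would make this rigorous by first mollifying $\MA,n$ in the $x_d$-variable only (which preserves Condition~\ref{cond:1}), deriving the bound with smooth coefficients, and passing to the limit using that the constant is independent of the regularization parameter; alternatively one can work directly from the finite-difference form of monotonicity given in \eqref{eq:monotoneA}--\eqref{eq:monotonen} without introducing distributional derivatives at all.

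Collecting all contributions, applying Young's inequality to absorb $\|u\|^2_{H^1_k(B_R)}$ on the left-hand side, and using $kR_0 \ge C_{\rm wave}$ to eliminate lower-order remainders, yields \eqref{eq:resol}. The factor $(1+kR_0)$ tracks the size of the layered region through the interior multiplier, while $(kR)R$ tracks the DtN norm on $\Gamma_R$ and the diameter of $B_R$. The heart of the argument is the choice of $Z$: it must simultaneously produce the standard radial Morawetz estimate in the homogeneous exterior and respect the anisotropy of $\MA, n$ in the layered interior; the block-diagonality $(\MA)_{d\ell}=0$ for $\ell<d$ and the $x_d$-direction monotonicity of Condition \ref{cond:1} are exactly the assumptions that make this compromise feasible.
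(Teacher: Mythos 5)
Your high-level strategy is the same as the paper's: a Morawetz-type identity with a vector field that equals $\bx$ outside $B_{R_0}$ and a purely $\be_d$-aligned field inside, exploiting the block-diagonality $(\MA)_{d\ell}=0$ and the $x_d$-monotonicity to get favourable signs, followed by mollification and a passage to the limit. However, there are two genuine gaps.

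\textbf{First gap: the mollification step does not preserve Condition \ref{cond:1}.} You assert that mollifying $\MA,n$ in the $x_d$-variable only preserves the monotonicity conditions \eqref{eq:monotoneA}--\eqref{eq:monotonen}. This is false, and the paper makes the point explicitly: take $n$ depending only on $x_d$ with $n\equiv C$ for $0\le x_d<c$ and $n=C-x_d$ for $-c<x_d<0$ (which satisfies \eqref{eq:monotonen}). Any mollification of radius $\delta>0$ --- one-dimensional in $x_d$ or full $d$-dimensional --- smears the decreasing branch from $x_d<0$ into $0<x_d<\delta$, so there is $x_d^*\in(0,\delta)$ with $\partial n_\delta/\partial x_d(x_d^*)<0$, violating \eqref{eq:monotone}. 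The fix (Lemma \ref{lem:approx}) is to first replace $\MA,n$ by constants ($A_{\max}\MI$ and $n_{\min}$, respectively, so Condition \ref{cond:1} is preserved) in the strip $\{|x_d|\le\delta_1\}\cap B_{R_0}$, and then mollify with radius $\delta\le\delta_1$. Your alternative of working directly from the finite-difference form of monotonicity also does not obviously close: the multiplier $\bZ\cdot\nabla u$ only makes sense and is integrable by parts once $u\in H^2(B_R)$, which in turn needs smooth coefficients, so the approximation cannot be sidestepped by a purely distributional reading.

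\textbf{Second gap: the Morawetz identity alone does not control $\|u\|_{H^1_k(B_R)}$.} Your schematic inequality places the full $H^1_k$-norm on the left. But with $\bZ=x_d\be_d$ inside $B_{R_0}$, the quadratic form the multiplier produces there (see the term $2\Re\langle \MA\gu,\partial\bZ\,\gu\rangle$ and the discussion after \eqref{eq:rellich4}) controls only $|\partial_d u|^2$, not $|\gu|^2+k^2|u|^2$; the full norm is obtained only on $\supp\chi$, i.e.\ in the annulus $\{R_0<r<R\}$ where $\bZ=\bx$. Two further ingredients are needed and they are the heart of the argument: (i) a Poincar\'e--Friedrichs-type inequality (\cite[Lemma~2.7]{ChSpGiSm:20}) to recover $\int_{B_{R_0}}|u|^2$ from $\int|\partial_d u|^2$ plus $\int |u|^2$ on an annulus, and (ii) Green's identity with $\Re\langle\DtN u,u\rangle\le 0$ to recover $\int_{B_{R_0}}|\gu|^2$ once $\int|u|^2$ is under control (inequality \eqref{eq:Green_ineq}). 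Step (i) is also where the extra powers of $k$ (and hence the $(1+kR_0)(kR)R$ structure in \eqref{eq:resol}) arise, since it trades $|\partial_d u|^2$ for $|u|^2$ without the compensating $k^2$; your attribution of those factors is therefore inaccurate as well.
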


\begin{remark}[Plane-wave scattering]
A common Helmholtz problem in applications is scattering of an incident field $u_{\rm inc}$
(such as a plane wave or a point source), creating a scattered field $u_{\rm sca}$, satisfying
the Sommerfeld radiation condition, with the total field $u_{\rm inc} + u_{\rm sca}$ satisfying
the Helmholtz equation with zero right-hand side (see, e.g., \cite[Definition 2.4]{MoSp:19}). 
This problem fits into the framework of Definition \ref{def:transmission} with
$u = u_{\rm sca} + \chi u_{\rm inc}$, where $\chi$ is a smooth cutoff function such that
$\chi = 1$ on $\Din$ and $\chi = 0$ in a neighborhood of $\partial B_R$, and
$f \eq (\Delta \chi) u_{\rm inc} + 2 \grad u_{\rm inc} \cdot \grad \chi$.
\end{remark}

\subsubsection{%
Discussion of the novelty of the well-posedness result in Theorem \ref{main_theorem_stability}%
}
\label{sec:UCP}

The well-posedness result of Theorem \ref{main_theorem_stability}
(i.e., well-posedness for coefficients satisfying Condition \ref{cond:1}) 
is new for $d=3$, but not for $d=2$.  

Recall from Fredholm theory that well-posedness of the Helmholtz equation follows from proving uniqueness. Two established ways of proving uniqueness are to (a) prove a unique continuation principle (UCP) or (b) prove an a priori bound on the solution in terms of the data.

When $d=2$, a UCP for the
Helmholtz equation holds with $\MA \in L^\infty$ and $n \in L^p$ with $p>1$ \cite{Al:12}, thus covering coefficients satisfying Condition \ref{cond:1}.
In constrast, when $d=3$ the UCP holds when $\MA$ is piecewise Lipschitz \cite{BaCaTs:12},
\cite[Proposition 2.11]{LiRoXi:19} (by using the UCP in the Lipschitz case \cite{Pr:60, GaLi:87, Ka:88})
and $n \in L^{3/2}$ \cite{JeKe:85,  Wo:92}. However, examples of $\MA \in C^{0,\alpha}$ for
all $\alpha<1$ for which the solution of the transmission problem is not unique at a particular
$k>0$ (and thus the UCP fails) are given in 
\cite{Pl:63,Mi:74,Fi:01}.

The only other Helmholtz
well-posedness result we are aware of that is valid for $\MA$ with no smoothness assumptions
other than being $L^\infty$ is in \cite[Theorem 2.7]{GrPeSp:19}. There, well-posedness is proved (via proving an a priori bound -- similar to in the present paper) for $\MA \in L^\infty$ satisfying a radial-monotonicity condition (as opposed to the monotonicity
in a single coordinate direction in \eqref{eq:monotoneA}). 

\subsubsection{Discussion of the novel features of the bound in Theorem \ref{main_theorem_stability}}

The bound \eqref{eq:resol} is novel in two ways.

(i) A standard way of obtaining frequency-explicit bounds on solutions of the high-frequency
Helmholtz equation is to consider the billiard flow defined by the (semiclassical) principal
symbol of the Helmholtz equation and use associated results on propagation of singularities
under this flow \cite{MeSj:78}, \cite{MeSj:82}, \cite[Chapter 24]{Ho:85}. However, this flow
not well-defined for the class of $\MA$ and $n$ in Condition \ref{cond:1}.

(ii) 
The constant in the bound \eqref{main_eq_stability} only depends on the coefficients $\MA$ and $n$ via $n_{\min}$ (as noted below Theorem \ref{main_theorem_stability}), and the constant in the bound \eqref{eq:resol} is independent of $\MA$ and $n$.
\footnote{We note that a bound explicit in the coefficients for nontrapping $A$ and $n$ was recently proved in \cite{GaSpWu:20} using the techniques discussed in Point (i).}

Regarding (i): the flow is defined as the solution $(\bx(t),\bxi(t))$ of the Hamiltonian system
\beq\label{eq:Hamilton}
\dot{x_i}(t) = \partial_{\xi_i}p\big(\bx(t), \bxi(t) \big),
\qquad
\dot{\xi_i}(t) = -\partial_{x_i}p\big(\bx(t), \bxi(t) \big),
\eeq
where the Hamiltonian equals the semiclassical principal symbol of the Helmholtz equation, namely
\beqs
p(\bx,\bxi):= \sum_{i=1}^d\sum_{j=1}^{d} \MA_{ij}(\bx)\xi_i \xi_j - n(\bx);
\eeqs
see, e.g., \cite[Page 281]{Zw:12}.
By the Picard--Lindel\"of theorem, the flow exists and is unique if $\MA,n$ are both $C^{1,1}$, since the coefficients of the ODE system \eqref{eq:Hamilton} are then Lipschitz. 
By the well-known examples of ODE non-uniqueness with non-Lipschitz coefficients, if $\MA$ and $n$ are rougher than $C^{1,1}$, then the flow is not guaranteed to be well-defined.
The flow can be defined piecewise, with results about the behaviour of singularities hitting the interface 
given in \cite[Chapter 11]{MeTa}, \cite{Mi:00}, and $k$-explicit bounds on the solution of the Helmholtz transmission problem when $A$ and/or $n$ are discontinuous on a $C^\infty$ strictly-convex interface given in \cite{CaPoVo:99}, \cite{PoVo:99}, \cite{PoVo:99a}. However, it is not possible to defined the flow for the range of coefficients covered by Condition \ref{cond:1}, which need only be $L^\infty$ in the $\bx'$ directions.

Regarding (ii):~this explicitness in the coefficients is a consequence of the method we use to prove \eqref{eq:resol}, namely a Morawetz-type identity -- see the discussion in \S\ref{sec:idea} and the references therein.
The fact that the constant in the bound \eqref{eq:resol} is independent of $\MA$ and $n$ is the basis of the homogenisation results in \S\ref{sec_homo_results}.


\subsubsection{Discussion about the $k$-dependence in the bound \eqref{eq:resol}}
\label{sec:kexplicit}

The key points are the following.
\bit
\item
There exist (fixed) $\MA$ and $n$ such that $\Csol$ grows super-algebraically in $k$ as $k\tendi$
due to trapped rays \cite{Ra:71, PoVo:99, St:99, CaPo:02, Ca:12, CaLePa:12}.
The polynomial bound on $\Csol$ in \eqref{main_eq_stability} shows that this behaviour is ruled out by Condition \ref{cond:1}, which is consistent
with the physical understanding of what causes rays to be trapped for the transmission problem (however, we do not know whether the $k$-dependence in the polynomial bound \eqref{main_eq_stability} is sharp or not).%
\item
The recent results of \cite{SaTo:18} show that, in 1-d, $\Csol$ can grow exponentiallly
through a sequence $n_j, k_j$, with $k_j\tendi$ as $j\tendi$,
through a mechanism \emph{not} involving the trapping of rays.
These examples in \cite{SaTo:18} involve piecewise
constant $n$ (i.e.,~a 1-d layered medium), and the key point is that to get the exponential
growth the width of the pieces need to be tied to $k$ in a delicate way.
This behaviour is ruled out for the layered obstacles included in Condition \ref{cond:1} via 
the polynomial bound on $\Csol$ in \eqref{main_eq_stability}, which is uniform in $\MA$ and $n$ satisfying
Condition \ref{cond:1}.
\eit

\noi We now give more detail on these two points.

\paragraph{Growth of $\Csol$ via trapped rays}
For the Helmholtz transmission problem of Definition \ref{def:transmission}, super-algebraic growth of $\Csol$ through a sequence of $k_j$s has been proved  in the following two cases.
\bit
\item[(i)]
$\MA$ and $n$ are 
such that $\supp(\MI-\MA) = \supp(1-n)= \domainin$
with $\Din$ a $C^\infty$
convex domain with strictly positive curvature, and the jumps of $\MA$ and $n$
have a certain combination of signs 
(e.g., moving outwards, $\MA$ jumps down with $n$ fixed, or $n$ jumps up with $\MA$ fixed) 
so that rays can be totally internally reflected when hitting $\Gamma$ from inside $\Din$
\cite{PoVo:99} (see also \cite{Ca:12}, \cite{CaLePa:12},
\cite[Chapter 5]{AlCa:18} for similar results in the specific case when the obstacle is a ball).
The solutions corresponding to the trapped rays are known as
``whispering-gallery modes''; see, e.g., \cite{BaBu:91} and the references therein.
\item[(ii)]
$\MA=\MI$ and $n$ is $C^\infty$ and spherically symmetry with 
$2n(r_1) + (\partial n/\partial r)(r_1)<0$ for some $r_1$ \cite{Ra:71} (see also \cite[Theorem 7.7(ii)]{GrPeSp:19})
\footnote{Strictly speaking, \cite{Ra:71} proves the existence of
a sequence of resonances exponentially close to the real axis, but then the
``resonances to quasimodes'' result of \cite{St:00} implies super-algebraic
growth through a sequence of real $k_j$s.}; this condition on $n$ causes great circles on $r=r_1$ to be stable trapped rays; see \cite[Page 572]{Ra:71}.
\eit
Neither of the situations in (i) or (ii) are allowed under Condition \ref{cond:1}.
Indeed, Condition \ref{cond:1} implies that $A_{\min}\geq 1$ and $n_{\max}\leq 1$
(as noted just below the condition), and these prevent $\MA$ and $n$ having the ``bad''
jumps in (i). Furthermore, the monotonicity condition on $n$ \eqref{eq:monotonen} prevents $n$ from satisfying the condition in (ii).

\paragraph{Growth of $\Csol$ through a mechanism \emph{not} involving the trapping of rays}
For \eqref{eq:Helmholtz} posed on a 1-d interval with either Dirichlet or impedance boundary
conditions at either end, and at least one of the ends having impedance boundary conditions,
bounds on the Helmholtz solution where $\MA$ and $n$ have finite number of jumps are given in \cite{Ch:16},
\cite{GrSa:20}. These results have $\Csol \leq \exp \big(C(\MA,n)\big)$
(see \cite[Theorem 5.4 and 5.10]{GrSa:20}), but $\Csol \tendi$ if the number of jumps goes to infinity.
The fact that $\Csol$ is independent of $k$ is consistent with the fact that,
for fixed $\MA$ and $n$ geometric-optic rays are not trapped, since although $\MA$ and $n$ can jump
across interfaces, the 1-d nature of the problem means that no rays can get trapped moving
tangent to the interface (like in Point (i) above).

The paper \cite{SaTo:18} considers the 1-d case with $\MA=1$ and $n$ variable and proves 
\bit
\item there exist sequences $n_j, k_j$ (with $k_j\tendi$ as $j\tendi$) such that $\Csol$ grows exponentially as $j\tendi$ \cite[Remark 14]{SaTo:18},
\item if the number of jumps $\lesssim k$ then $\Csol \leq \exp \big(C(n) k\big)$, but $C(n)$ doesn't blow up with number of jumps
\cite[Prop.~18]{SaTo:18}, and
\item for a class of $n$ that oscillate between two values, with an arbitrarily-large number of jumps,
$\Csol \leq \exp \big(C(n) k\big)$ and  $C(n)$ doesn't blow up with number of jumps
\cite[Theorem 22]{SaTo:18}.
\eit
The paper \cite{SaTo:21} generates the results of \cite{SaTo:18} to radially-symmetric $n$ in
3-d (see \cite[Theorem 3.7]{SaTo:21} for the upper bounds and \cite[Lemma 4.3]{SaTo:21} for the
examples of exponential blow-up).

\subsubsection{Summary of the ideas behind the proof of Theorem \ref{main_theorem_stability}}
\label{sec:idea}

Theorem \ref{main_theorem_stability} is proved by the following five steps.

(i)
Observing that Theorem \ref{main_theorem_stability} follows from Theorem
\ref{main_theorem_stability2}  since $\sesqui(\cdot,\cdot)$ satisfies a G\aa rding inequality;
see Lemma \ref{lem:H1} below.

(ii)
Observing that, by Fredholm theory, to prove Theorem \ref{main_theorem_stability2} it is
sufficient to prove the bound \eqref{eq:resol} under the assumption of existence
(see Lemma \ref{lem:Fred}).

(iii) Approximating the coefficients $\MA$ and $n$ in Condition \ref{cond:1} by a sequence of
smooth coefficients $(\MA^\ell)_{\ell=0}^\infty$ and $(n^\ell)_{\ell=0}^\infty$ satisfying
Condition \ref{cond:smooth} below (see \S\ref{sec:approx}); this condition 
is similar to Condition \ref{cond:1}, but involves derivatives of the smooth coefficients $\MA^\ell$ and $n^\ell$.

(iv) Proving the bound \eqref{eq:resol} for these smooth coefficients, importantly with the
constants $\Cwave$ and $\Cone$ independent of $\ell$, using a Morawetz-type identity
(\eqref{eq:morid1} below) and 
making key use of the particular multiplier, vector field, and associated arguments
introduced recently for the constant-coefficient Helmholtz equation in  \cite{ChSpGiSm:20}; see the overview discussion in \S\ref{sec:ideasmooth},
where we recall that in using Morawetz-type identities, the main work is in finding coefficients of the multiplier so that a) the volume terms are sign-definite and control the desired norm of the solution (here the $H^1$ norm) and b) the boundary terms have the correct sign.
The key point for the present paper is that the vector field
(defined by \eqref{eq:Z}) equals $\be_d x_d$ in a neighbourhood of $\supp(\MI-\MA)$ and
$\supp(1-n)$, where $\be_d$ denotes the unit vector in the $x_d$ direction; this vector field is
constant in the $\bx'$ directions, and so ``does not see'' the behaviour of $\MA$ and $n$ in these
directions, hence why no constraint is imposed on this behaviour.
We highlight that our use of a Morawetz-type identity to prove an a priori bound on solutions of the Helmholtz equation satisfying the Sommerfeld radiation condition 
follows, e.g., \cite{MoLu:68, Bl:73, Mo:75, MoRaSt:77, BlKa:77, PeVe:99, NgVo:12, NgVo:12b, NgNg:15, GrPeSp:19, MoSp:19,ChSpGiSm:20} (see also the bibliographic comments in Remark \ref{rem:biblio}), although all these papers apart from \cite{Mo:75, BlKa:77, MoRaSt:77,ChSpGiSm:20} prove their bounds using radial vector fields.

(v) Using approximation arguments from \cite{GrPeSp:19} to prove the bound \eqref{eq:resol}
for the original $\MA$ and $n$ (see Lemma \ref{lem:basicapprox1}); these approximation
arguments were inspired by similar arguments in \cite{Th:06} in the setting of rough-surface
scattering  (with this thesis recently made available as \cite{Ba:19}). 

\subsection{Main result II: homogenization}
\label{sec_homo_results}

An attractive feature of Theorem \ref{main_theorem_stability} is that it does not constrain
the variations of $\MA$ or $n$ along the $\bx'$ hyperplane. In particular, highly-oscillatory
coefficients are allowed, and the results in this section concern the limiting behaviour when
the period of oscillations goes to zero. Theorem \ref{main_theorem_stability} would also allow
one to do homogenization under assumptions other than periodicity (see, e.g.,
\cite[Chapter 13]{cioranescu_donato_1999a}), but for simplicity we only consider
a periodic setting here.

\subsubsection{Statement of the result}

With $\varepsilon > 0$ a (small) oscillation period, we define oscillatory coefficients
$n_\eps$ and $\MA_\eps$ created by repeating functions $\hn$ and $\hMA$ on a grid of
size $\eps$ inside a bounded Lipschitz domain $\Din$.

\begin{definition}[Oscillatory coefficients in $\Din$]
\label{definition_oscillatory_coefficients}
Let $Y \eq (0,1)^d$ (i.e., the unit cube in $\mathbb R^d$).
If $\hn \in L^\infty(Y,\mathbb R)$ and $\hMA \in L^\infty(Y,\SPD)$, let
\begin{equation*}
\hn^\eps(\bx) \eq \hn \left (\left \{ \frac{\bx}{\eps} \right \}\right )
\quad\tand\quad
\hMA^\eps(\bx) \eq \hMA \left (\left \{ \frac{\bx}{\eps} \right \}\right )
\end{equation*}
for all $\bx \in \mathbb R^d$, where $\{\bx/\eps\}_j = (x_j/\eps) \mod  1$.
Then, given a bounded Lipschitz domain $\Din$ satisfying \eqref{eq:normal}, let
\begin{equation}\label{eq_n_eps_A_eps}
n_\eps \eq \mathbf 1_{\Din} \hn^\eps + \mathbf 1_{\Dout}
\quad\tand\quad
\MA_\eps \eq \mathbf 1_{\Din} \hMA^\eps + \mathbf 1_{\Dout} \MI.
\end{equation}
\end{definition}

We now impose constraints on $\hn$ and $\hMA$ so that $n_\eps$ and $\MA_\eps$ satisfy
Condition \ref{cond:1} (and so Theorem \ref{main_theorem_stability} applies) -- these
are the first three points in Condition \ref{condition_periodic_patterns} below.
We also impose additional constraints on $\hn$ and $\hMA$ so that the
correctors in the homogenization argument (see \S\ref{section_homogenization} below)
have sufficient regularity -- these constraints are in the final point in Condition
\ref{condition_periodic_patterns}.


\begin{figure}
\begin{minipage}{.45\linewidth}
\begin{tikzpicture}[scale=3]
\path[draw,clip,rotate=90] plot[smooth cycle]
coordinates{(0, 1) ( 1.2, 0.8) ( .2, .25) ( .2,-.25) ( 1.2,-0.8)
            (0,-1) (-1.2,-0.8) (-.2,-.25) (-.2, .25) (-1.2, 0.8)};

\foreach \i in {-80,...,80}
{
	\draw[solid] (0.10*\i,-2.0) -- (0.10*\i,2.0);
	\draw[solid] (0.03+0.10*\i,-2.0) -- (0.03+0.10*\i,2.0);
}
\end{tikzpicture}
\end{minipage}
\begin{minipage}{.45\linewidth}
\begin{tikzpicture}[scale=5]
\path[draw,clip,rotate=25] (.5,.5) ellipse (0.5 and 0.8);

\foreach \i in {-6,...,16}
\foreach \j in {-2,...,28}
{
   \draw[solid] (0.05*\i,0.05*\j) circle (.015);
}
\end{tikzpicture}
\end{minipage}
\caption{Sketches of admissible periodic patterns, with the black lines representing discontinuities in the coefficients. Left-panel: if the coefficients in this 2-d domain are constant in the vertical direction, then they satisfy (a)
in Condition \ref{condition_periodic_patterns}.
Right-panel: horizontal cross section of a 3-d domain satisfying (b) in Condition \ref{condition_periodic_patterns}.}
\label{figure_periodic_patterns}
\end{figure}
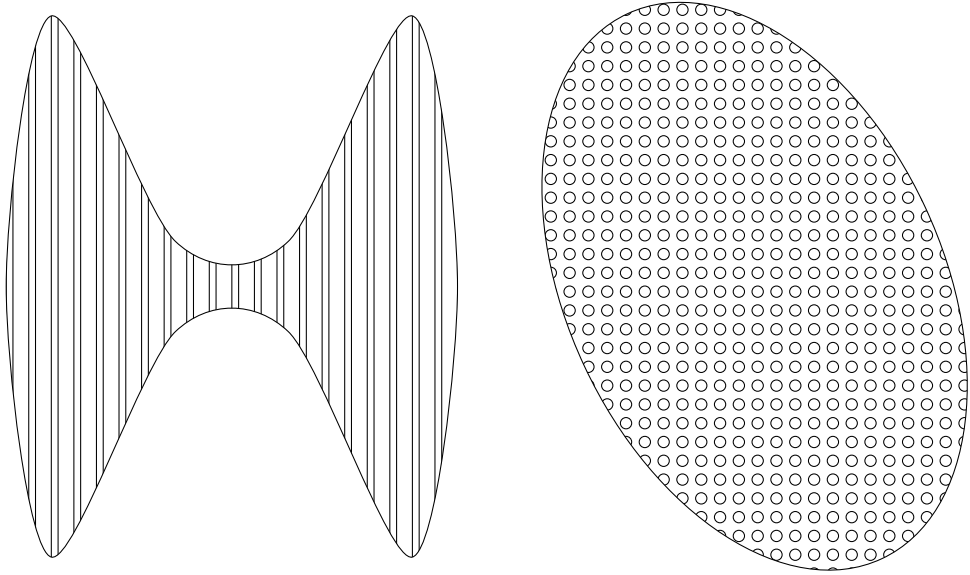

\begin{condition}[Admissible periodic patterns]
\label{condition_periodic_patterns}
$\hn \in L^\infty(Y)$ and $\hMA \in L^\infty(Y,\SPD)$ are
\emph{admissible periodic patterns} if
\begin{itemize}
\item $\hn$ and $\hMA$ are independent of $y_d$
\item $\widehat{A}_{d\ell}= 0$ for $\ell=1,\ldots,d-1$,
\item $\hMA_{\rm min} \geq 1$ and $\hn_{\rm max} \leq 1$,
\item either
\begin{itemize}
\item[(a)] $\hn$ and $\hMA$ only depend on $y_1$ (or,  when $d=3$, only on $y_2$)
and $\hMA$ is piecewise $C^{0,1}$.
\item[(b)] the periodic extension of $\hMA$ is piecewise $C^{1,1}$ and that of $\hn$
is piecewise $H^1$ on a partition of $\Rea^d$ that consists of subdomains with
$C^{2,1}$ boundaries.
\end{itemize}
\end{itemize}
\end{condition}

The class of coefficients covered by Condition \ref{condition_periodic_patterns}
corresponds to obstacles made of thin layers in 2D, and of thin layers or fibers
in 3D.

\bre[Physical relevance of finely-layered and finely-fibred materials]\label{rem:physical}
Many key applications motivating homogenized theory are based on ``composite'' or ``meta''
materials. Such materials are based on blending two (or more) materials in a periodic
manner at a fine scale, and thus are modelled by piecewise-constant periodic
patterns, as covered by Condition \ref{condition_periodic_patterns}. Particular instances
of layered and fibred media arise when considering the propagation of elastic waves
through rocks \cite{rocks} and electromagnetic waves in thin-film coatings \cite{layers} or optical fibres \cite{fibres}.
\ere

\begin{corollary}
Let $\Cwave$ be as in Theorem \ref{main_theorem_stability}. If $\hn$ and $\hMA$ satisfy
Condition \ref{condition_periodic_patterns} and $kR_0\geq \Cwave$, then, for all $\eps>0$,
the solution of the variational problem \eqref{eq:vf} with coefficients $n_\eps$ and $\MA_\eps$
exists and is unique, and, with $\Clayershort$ defined by \eqref{main_eq_stability},
\begin{equation}
\label{main_eq_estimate_Cse}
\Cse \leq \Clayerh.
\end{equation}
\end{corollary}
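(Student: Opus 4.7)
The plan is to verify that, under Condition \ref{condition_periodic_patterns} together with the graph-type assumption \eqref{eq:normal} on $\Din$ built into Definition \ref{definition_oscillatory_coefficients}, the oscillatory coefficients $n_\eps$ and $\MA_\eps$ satisfy Condition \ref{cond:1} for every $\eps > 0$; the corollary then follows immediately by applying Theorem \ref{main_theorem_stability}. Only the first three bullet points of Condition \ref{condition_periodic_patterns} are needed for this step; the regularity bullet (a)/(b) is reserved for the homogenization analysis later.

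I would first check the basic hypotheses of Definition \ref{def:transmission}. Since $\hMA \in L^\infty(Y,\SPD)$ with $\hMA_{\min}\geq 1$ and $\hn \in L^\infty(Y)$ is positive and bounded with $\hn_{\max}\leq 1$, the periodized fields $\hMA^\eps$ and $\hn^\eps$ inherit these bounds, and hence, by \eqref{eq_n_eps_A_eps}, so do $\MA_\eps$ and $n_\eps$, uniformly in $\eps$. Because $\MA_\eps=\MI$ and $n_\eps=1$ outside of $\Din\subset B_{R_0}$, the support conditions $\supp(1-n_\eps)\subset B_{R_0}$ and $\supp(\MI-\MA_\eps)\subset B_{R_0}$ are immediate. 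Finally, $(\MA_\eps)_{d\ell}=(\MA_\eps)_{\ell d}=0$ for $\ell=1,\ldots,d-1$ follows from the corresponding assumption on $\hMA$ together with the fact that $\MI$ is diagonal.

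The substantive part of the argument is the verification of the monotonicity conditions \eqref{eq:monotoneA} and \eqref{eq:monotonen}. The key observation is that, since $\hMA$ and $\hn$ are independent of $y_d$, the periodized fields $\hMA^\eps$ and $\hn^\eps$ are independent of $x_d$; combined with \eqref{eq_n_eps_A_eps}, this means that $\MA_\eps$ and $n_\eps$ are constant along any vertical segment staying entirely inside $\Din$, and also constant ($\MI$ and $1$, respectively) along any vertical segment staying entirely inside $\Dout$. The assumption \eqref{eq:normal} on $\Din$ then rules out re-entry: for any $\bx$ with $x_d > 0$ and any $h\geq 0$, the segment $[\bx,\bx+h\be_d]$ can cross $\Gamma$ only from $\Din$ into $\Dout$ (at a re-entry point the outer normal of $\Din$ would have a negative $\be_d$-component, contradicting \eqref{eq:normal} since $x_d>0$ there). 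Across such a single transition, $\MA_\eps$ jumps from a value $\succeq \MI$ down to $\MI$, giving $\MA_\eps(\bx+h\be_d)-\MA_\eps(\bx)\preceq 0$, and $n_\eps$ jumps from a value $\leq 1$ up to $1$, giving $n_\eps(\bx+h\be_d)-n_\eps(\bx)\geq 0$. These signs are precisely what \eqref{eq:monotoneA} and \eqref{eq:monotonen} require for $x_d>0$; the case $x_d<0$ is symmetric, and $x_d=0$ is trivial.

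With Condition \ref{cond:1} verified, Theorem \ref{main_theorem_stability} (which applies because $kR_0\geq\Cwave$) immediately yields existence, uniqueness, and the bound $\Csol(\MA_\eps,n_\eps,k,R,R_0)\leq C_{\rm layer}(kR,kR_0,(n_\eps)_{\min})$. Since $\hn_{\max}\leq 1$ forces $\hn_{\min}\leq 1$, we have $(n_\eps)_{\min}=\min(\hn_{\min},1)=\hn_{\min}$ independently of $\eps$, so the right-hand side equals $\Clayerh$, establishing \eqref{main_eq_estimate_Cse}. I do not anticipate any serious obstacle here: the content lies in the case analysis of the monotonicity check, which cleanly combines the graph structure of $\Din$ with the $y_d$-independence imposed by Condition \ref{condition_periodic_patterns}.
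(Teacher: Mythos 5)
Your proof is correct and takes exactly the approach the paper intends: the text immediately preceding Condition \ref{condition_periodic_patterns} states explicitly that the first three bullets are designed precisely so that $n_\eps$ and $\MA_\eps$ satisfy Condition \ref{cond:1} (and hence Theorem \ref{main_theorem_stability} applies); the paper leaves this verification implicit, and you have filled it in correctly, including the case analysis across $\Gamma$ and the identification $(n_\eps)_{\min}=\hn_{\min}$.

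One small point worth tightening: the ``no re-entry'' claim is cleanest when formulated via the graph condition of Example \ref{ex:1} (i.e.\ $\Gamma\cap\{x_d\gtrless 0\}$ is the graph of a $C^0$ function of $\bx'$), since the normal-based argument you sketch only sees the a.e.-defined Lipschitz normal and doesn't by itself preclude tangential or measure-zero pathologies. The paper asserts, just after Example \ref{ex:1}, that for Lipschitz $\Gamma$ the graph condition and \eqref{eq:normal} can be used interchangeably; your write-up implicitly relies on that equivalence, which is fine, but citing the graph formulation directly would make the crossing analysis watertight. Also, as you note, $\hn_{\min}>0$ is implicit in the setup (needed for \eqref{eq:nlimits} and for $\Clayerh$ to be finite), and it is good that you flagged it.
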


Crucially, the bound \eqref{main_eq_estimate_Cse} is uniform in $\eps$, and this uniformity
allows us to extract a convergent subsequence $u_\eps$. In fact, by classical homogenization
theory \cite{BeLiPa:78, cioranescu_donato_1999a}, in the limit $\eps \to 0$, the whole sequence
$u_\eps$ to $u_0$ weakly in $H^1(B_R)$ and strongly in $L^2(B_R)$, where $u_0$ is the
solution to the transmission problem of Definition \ref{def:transmission} with constant
``homogenized'' coefficients $\MAH$ and $\nH$. These homogenized coefficients are obtained
from $\hMA$ and $\hn$ by averaging formulas given by \eqref{eq_def_AH} and \eqref{eq_def_nH}
below; note that these are the same formulas for the coercive case
(see, e.g., \cite[Chapter 1]{BeLiPa:78}, \cite[Chapter 6]{cioranescu_donato_1999a}).

Recall that a domain $\domain \subset \RRR^d$ is \emph{star-shaped
with respect to the point $\bx_0 \in \RRR^d$} if, whenever $\bx \in \domain$,
the line segment $[\bx_0,\bx]\in \domain$.

\begin{lemma}[Well-posedness of the homogenized problem]
\label{main_lemma_homogenization}
Let $\Cwave$ be as in Theorem \ref{main_theorem_stability2}.
Given $\hn$ and $\hMA$ satisfying Condition \ref{condition_periodic_patterns},
let $\nH$ and $\MAH$ be defined by \eqref{eq_def_nH} and \eqref{eq_def_AH}, respectively,
and let $u_0 \in H^1(B_R)$ be the solution of the variational problem \eqref{eq:vf} with
coefficients $\nH$ and $\MAH$ (we then say that $u_0$ is the solution of the homogenized problem).

Then, if $kR_0\geq \Cwave$, $u_0$ exists and is unique, and 
\begin{equation}\label{eq_CsolH}
\CsH \leq \Clayerh.
\end{equation}

Furthemore, if $\Din$ is additionally star-shaped with respect to a point,
then, for all $k>0$, $u_0$ exists and is unique, and 
\begin{equation}\label{eq_Cstar}
\CsH \leq 
\frac{1}{\hn_{\min}}
\left(
1
+
kR\frac{4}{\sqrt{\hn_{\min}}}
\sqrt{ 1 + \frac{1}{\hn_{\min}}\left(1 + \frac{d-1}{2kR}\right)^2}
\right).
\end{equation}
\end{lemma}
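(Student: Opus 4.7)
My plan is to show that the homogenized coefficients $\MAH$ and $\nH$ themselves satisfy Condition \ref{cond:1}, so that \eqref{eq_CsolH} follows directly by applying Theorem \ref{main_theorem_stability} to the variational problem with these coefficients. The star-shaped bound \eqref{eq_Cstar} will then follow from a classical Morawetz/Rellich identity for the constant-coefficient transmission problem.

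First, since $\MAH$ and $\nH$ are \emph{constants}, the monotonicity conditions \eqref{eq:monotoneA} and \eqref{eq:monotonen} hold trivially (both sides vanish). To verify the remaining hypotheses of Condition \ref{cond:1}, I would check:
\emph{(i)} that $\nH \le 1$ and $\nH \ge \hn_{\min}>0$, which is immediate from the averaging formula $\nH = \int_Y \hn$ combined with $\hn_{\max}\le 1$ in Condition \ref{condition_periodic_patterns};
\emph{(ii)} that $\MAH \succeq \MI$, which I would obtain from the variational characterization
\begin{equation*}
\MAH \bxi \cdot \overline{\bxi} = \inf_{v \in H^1_{\per}(Y)} \int_Y \hMA(\by)(\bxi + \nabla v) \cdot \overline{(\bxi + \nabla v)}\, \rd\by,
\end{equation*}
using $\hMA \succeq \MI$ together with $\int_Y \nabla v = 0$ for periodic $v$ to conclude $\MAH \bxi\cdot \overline{\bxi} \ge |\bxi|^2$;
\emph{(iii)} that $(\MAH)_{d\ell}=0$ for $\ell=1,\ldots,d-1$, which is where the structural hypotheses in Condition \ref{condition_periodic_patterns} are essential. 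Since $\hMA$ is independent of $y_d$ and $\hMA_{d\ell}=0$ for $\ell<d$, the cell problem defining the $\ell$-th corrector $\chi_\ell$ can be taken independent of $y_d$, and then a direct calculation in the averaging formula for $\MAH$ yields the required block structure. With all three properties in hand, Theorem \ref{main_theorem_stability} applied to $(\MAH,\nH)$ yields both existence-uniqueness of $u_0$ and the bound \eqref{eq_CsolH}.

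For the star-shaped improvement \eqref{eq_Cstar}, which must hold for \emph{all} $k>0$ (not just $kR_0 \ge \Cwave$), I would exploit that $(\MAH,\nH)$ are constants and $\Din$ is star-shaped with respect to some point (without loss of generality the origin, by translation). Under these hypotheses the classical Morawetz multiplier $\bx\cdot\nabla u_0 - \ri k u_0 + \tfrac{d-1}{2} u_0$ applied to the transmission problem produces a Rellich-type identity whose interior jump terms on $\Gamma$ have a favourable sign exactly because $\MAH \succeq \MI$, $\nH \le 1$ and $\bx\cdot\bn\ge 0$ on $\Gamma$ (by star-shapedness). This is precisely the constant-coefficient transmission setup treated e.g. in \cite[Theorem 2.5]{MoSp:19} (see also \cite{GrPeSp:19}), and I would either invoke that bound directly or reproduce the short Morawetz computation, inserting $\hn_{\min}\le \nH$ to get the factors of $1/\hn_{\min}$ and $1/\sqrt{\hn_{\min}}$ in \eqref{eq_Cstar}. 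Well-posedness for all $k$ then follows from this a priori bound via the same Fredholm argument as in Step (ii) of \S\ref{sec:idea}.

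The main obstacle is the verification of point \emph{(iii)}: the block structure $(\MAH)_{d\ell}=0$ is not automatic from $(\hMA)_{d\ell}=0$ for general periodic matrix-valued coefficients, and one genuinely needs the additional hypothesis that $\hMA$ is independent of $y_d$, which decouples the $d$-th cell problem from the others. Everything else is either a direct application of Theorem \ref{main_theorem_stability} or a reduction to a known constant-coefficient star-shaped Morawetz bound.
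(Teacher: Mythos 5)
Your proposal is correct and follows essentially the same route as the paper: verify that the constant coefficients $(\MAH,\nH)$ satisfy Condition \ref{cond:1} (using $\hn_{\max}\le 1$ for $\nH\le 1$, $\hMA\succeq\MI$ plus homogenization theory for $\MAH\succeq\MI$, and independence of $y_d$ together with $(\hMA)_{d\ell}=0$ to kill the off-diagonal entries of $\MAH$ via the correctors), then invoke Theorem \ref{main_theorem_stability} for \eqref{eq_CsolH} and the constant-coefficient star-shaped Morawetz bound of \cite{GrPeSp:19} combined with Lemma \ref{lem:H1} for \eqref{eq_Cstar}. The only cosmetic difference is that you derive $\MAH\succeq\MI$ from the variational/Dirichlet characterization of the homogenized matrix, whereas the paper cites \cite[Theorem 13.7]{cioranescu_donato_1999a} — these are equivalent.
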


\begin{theorem}[$k$- and $\eps$-explicit homogenization error estimate in $H^1_k(B_R)$]
\label{main_theorem_homogenization}
Let $\Cwave$ and $\Clayershort$ be as in Theorem \ref{main_theorem_stability}.
Let $\hn$ and $\hMA$ satisfy Condition \ref{condition_periodic_patterns}
and assume that $\Gamma$ is $C^{1,1}$. Given $f\in L^2(B_R)$, let $u_\eps$
be the solution of the variational problem \eqref{eq:vf} with coefficients
$\MA_\eps$ and $n_\eps$ and $F(v) := \int_{B_R}f\overline{v}$. Let $u_0$ be
the solution of the variational problem \eqref{eq:vf} with this $F$ and 
coefficients $\MAH$ and $\nH$, and let $u_1^\eps$ be defined in terms
of $u_0$ by \eqref{eq_def_first_order_corrector} and \eqref{eq_eps_diff} below.

If $kR_0\geq \Cwave$ then $u_\eps, u_0$, and $u_1^\eps$ exist and are unique,
and there exist $\Ctwonew, \Cthreenew>0$ (depending only on $\hMA,\hn$, and $\Din$)
such that
\begin{align}
\nonumber
&
k \|u_\eps-u_0- \eps u_1^\eps\|_{H^1_k(\Din)} + k \|u_\eps-u_0\|_{H^1_k(\Dout)}
\\
&
\hspace{2cm}
\leq
\Ctwonew\,
\Clayerh\,
\Big((k\eps)^{1/2} + k\eps \Big)\Big( \N{u_0}_{H^2(B_R)} + k \N{u_0}_{H^1_k(B_R)}\Big) 
\label{eq_homo_thm_1}
\\
&\hspace{2cm}
\leq
\Cthreenew\,
\Clayerh\,
\Big((k\eps)^{1/2} + k\eps \Big)
\CsH
\N{f}_{L^2(B_R)}.
\label{eq_homo_thm_2}
\end{align}
\end{theorem}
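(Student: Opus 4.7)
The plan is to apply the a priori stability estimate \eqref{main_eq_stability} of Theorem \ref{main_theorem_stability} to a suitably modified error. Concretely, introduce a smooth cutoff $\eta_\eps \in C^\infty_c(\Din)$ that equals $1$ in $\Din$ outside an $\eps$-neighbourhood of $\Gamma$, vanishes on $\Gamma$, and satisfies $\|\nabla \eta_\eps\|_{L^\infty} \lesssim \eps^{-1}$, and set
\[
e_\eps \eq u_\eps - u_0 - \eps\, \eta_\eps\, u_1^\eps \qquad \text{on } B_R.
\]
The cutoff is needed because $\eps u_1^\eps$ does not in general satisfy the Dirichlet transmission condition on $\Gamma$; killing it in a thin layer enforces matching across $\Gamma$ at the price of a localized error supported on an $\eps$-strip.

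The key step is to compute and bound, uniformly in $\eps$ and $k$, the residual $R_\eps(w) \eq \sesqui_\eps(e_\eps, w)$ for $w \in H^1(B_R)$, where $\sesqui_\eps$ is the form \eqref{eq:sesqui} with coefficients $\MA_\eps, n_\eps$. Writing $\sesqui_H$ for the analogous form with homogenized coefficients $\MAH, \nH$, and using $\sesqui_\eps(u_\eps, w) = F(w) = \sesqui_H(u_0, w)$, one has
\[
R_\eps(w) = (\sesqui_H - \sesqui_\eps)(u_0, w) - \eps \, \sesqui_\eps(\eta_\eps u_1^\eps, w).
\]
Away from $\Gamma$ the classical cancellation identity of periodic homogenization (i.e., the very definition of $\MAH, \nH$ and $u_1^\eps$ via the cell problems) reduces the interior part of $R_\eps$ to zero-mean oscillating integrals of the form $\eps \int \psi(\bx/\eps)\,\partial^2 u_0 \,\overline{\partial w}$ with $\psi \in L^\infty(Y)$ a $Y$-periodic function of zero mean; these terms are bounded in $(H^1_k(B_R))'$ by $\eps \|u_0\|_{H^2(\Din)} \|w\|_{H^1_k(B_R)}$, producing the $k\eps$ contribution in \eqref{eq_homo_thm_1}. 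The contributions supported on the $\eps$-strip near $\Gamma$ (coming from $\nabla \eta_\eps$ and from the jump of the conormal derivative of $u_0 + \eps u_1^\eps$ across $\Gamma$) are estimated by the trace theorem and Cauchy--Schwarz on a strip of width $\eps$, yielding a $\sqrt{\eps}$ factor that becomes the $(k\eps)^{1/2}$ term. Condition \ref{condition_periodic_patterns}(a) or (b) is exactly what provides the $W^{1,\infty}(Y)$ regularity of the cell correctors needed in these estimates, while $\Gamma \in C^{1,1}$ combined with piecewise-constant $\MAH, \nH$ furnishes the piecewise $H^2$ regularity of $u_0$ used in the trace argument.

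Inserting the residual bound into \eqref{main_eq_stability} applied to $e_\eps$ (the well-posedness of $u_\eps$ is given by the corollary preceding the theorem, and that of $u_0, u_1^\eps$ by Lemma \ref{main_lemma_homogenization} and classical cell-problem theory), and then absorbing the cutoff contribution $\eps\|(1-\eta_\eps)u_1^\eps\|_{H^1_k(\Din)}$ (which by a similar $\eps$-strip estimate is bounded by the same right-hand side), yields \eqref{eq_homo_thm_1}. To obtain \eqref{eq_homo_thm_2} from \eqref{eq_homo_thm_1}, I would invoke standard piecewise $H^2$ elliptic regularity for the homogenized problem,
\[
\|u_0\|_{H^2(B_R)} \,\lesssim\, \|f\|_{L^2(B_R)} + k^2 \|u_0\|_{L^2(B_R)} \,\lesssim\, k\, \CsH\, \|f\|_{L^2(B_R)},
\]
valid because $\MAH, \nH$ are piecewise constant with $C^{1,1}$ interface $\Gamma$, and combine it with $\|u_0\|_{H^1_k(B_R)} \le \CsH\, \|f\|_{L^2(B_R)}$ from Lemma \ref{main_lemma_homogenization}.

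I expect the hard part to be obtaining the residual bound $k$-explicitly with the sharp prefactor $(k\eps)^{1/2} + k\eps$: the $(k\eps)^{1/2}$ rate is the intrinsic obstruction for the first-order ansatz, since the boundary layer near $\Gamma$ contributes a $\sqrt{\eps}$ trace error that cannot be improved without a dedicated boundary corrector. Tracking factors of $k$ correctly throughout requires using the $H^1_k$ norm in every duality pairing (rather than the plain $H^1$ norm standard in fixed-frequency homogenization), and the whole scheme ultimately hinges on the fact that \eqref{main_eq_stability} provides a stability constant $\Clayerh$ that is uniform in the $\eps$-dependent coefficients $\MA_\eps, n_\eps$.
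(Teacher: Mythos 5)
Your plan takes a genuinely different route from the paper. You work with a cutoff $\eta_\eps$ that kills $\eps u_1^\eps$ in an $\eps$-strip near $\Gamma$; the paper instead defines $u_{1,\eps}$ by extending $u_1^\eps$ to $\Dout$ via a stable extension operator $\EE$, introduces an explicit boundary corrector $\theta_\eps$ as the Helmholtz solution \eqref{eq_boundary_corrector}, and proves Lemma~\ref{lemma_homogenization_with_theta} by a duality argument with the adjoint solution $W_\eps$. Both strategies are legitimate in principle (and the cutoff is the more classical one in bounded-domain homogenization, going back at least to Bensoussan--Lions--Papanicolaou); the direct application of the stability bound \eqref{main_eq_stability} to your error $e_\eps$ is equivalent to the paper's duality argument. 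Your observation that the boundary strip trace yields the $(k\eps)^{1/2}$ factor and the interior terms yield $k\eps$ is correct, and your $H^2$ regularity estimate for $u_0$ matches the paper's Lemma~\ref{lemma_estimates_u0}.

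However, there is a genuine gap that your sketch glosses over: the assertion that ``the classical cancellation identity ... reduces the interior part of $R_\eps$ to $\eps\int\psi(\bx/\eps)\partial^2 u_0\,\overline{\partial w}$ with $\psi\in L^\infty(Y)$.'' The cell-problem cancellation only gives you that the matrix $\hMB-\MAH$ is $Y$-periodic, has zero mean, and is $\yy$-divergence-free column-by-column; it does \emph{not} by itself turn $\int_\Din(\hMB^\eps-\MAH)\grad u_0\cdot\overline{\grad w}$ into an $\eps$-order term. You need a flux corrector / vector potential for $(\hMB-\MAH)\grad u_0$ and an integration by parts, and two issues arise that the paper addresses at some length and you do not. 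First, the $L^\infty(Y)$ (or $H^{1+s}(Y)$ with $s>(d-2)/2$) bound on the potential is not free: in dimension $d=3$, generic $L^\infty$ coefficients $\hMA$ would not give enough regularity, and the paper obtains it (Appendix~\ref{appendix_vector_potentials}) by exploiting precisely the independence of $\hMA,\hn$ from $y_d$ in Condition~\ref{condition_periodic_patterns}, which drops the effective dimension to $d-1\le 2$ and makes Sobolev embedding $H^{1+s}\hookrightarrow L^\infty$ available for any $s>0$. Second, the naive flux-corrector integration by parts produces a boundary term on $\Gamma$ involving $\overline{\grad w}$, which is not in $(H^1_k)'$; the paper avoids this by folding $\grad u_0$ into the potential so that $\curly\qq=(\hMB-\MAH)\grad u_0$ and then integrating by parts via $\div\curl=0$, which yields a boundary term tested only against the \emph{trace} of the adjoint solution (see the proof of the lemma giving \eqref{eq_lemma_ueps_u0}). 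Your cutoff partially mitigates this because the part of the residual supported away from $\Gamma$ produces no boundary term, but the term $(b_H-b_\eps)(u_0,w)$ in your own residual decomposition is not localized, so the issue persists. Finally, you say nothing about the lower-order residual $k^2\int_\Din(\nH-\hn^\eps)u_0\,\overline{w}$, which is also $O(1)$ without the second-order corrector $\hmu$ satisfying \eqref{eq_mu}; the paper obtains the needed $O(\eps)$ bound in Lemma~\ref{lemma_L2_term} via Green's identity and the piecewise regularity of $\hmu$, another construction your sketch does not supply.
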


Since
\beqs
\N{u_\eps}_{H^1_k(B_R)}\leq \N{u_\eps-u_0}_{H^1_k(B_R)} + \N{u_0}_{H^1_k(B_R)},
\eeqs
the bound on $u_\eps-u_0$ from Theorem \ref{main_theorem_homogenization}
and the definition of $\CsHshort$ then imply the following bound on $\Cse$.

\begin{corollary}[Improved bound on $C_{{\rm sol}, \eps}$ for $k\eps$ sufficiently small (depending on $k$)]
\label{corollary_stability}
Let $\Cwave$ and $\Clayershort$ be as in Theorem \ref{main_theorem_stability}.
Let $\hn$ and $\hMA$ satisfy Condition \ref{condition_periodic_patterns}
and assume that $\Gamma$ is $C^{1,1}$. 
Then there exists $\Cfivenew$ (depending only on $\hMA,\hn$, and $\Din$) such that 
if $kR_0\geq \Cwave$ then
\begin{equation}\label{eq_newbound1}
\Cse
\leq
\frac{\Cfivenew}{\hn_{\min}} \bigg[ 1 +\left ( (k\eps)^{1/2} + k\eps \right )\Clayerh \bigg]
\CsH.
\end{equation}
\end{corollary}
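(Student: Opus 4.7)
\textbf{Proof plan for Corollary \ref{corollary_stability}.}
The plan is to follow exactly the observation recorded just above the statement. Fix $f\in L^2(B_R)$ and let $F(v):=\int_{B_R} f\,\overline v$; let $u_\eps$ and $u_0$ be the solutions of \eqref{eq:vf} with coefficients $(\MA_\eps,n_\eps)$ and $(\MAH,\nH)$, respectively (both exist and are unique by the previous results once $kR_0\ge\Cwave$). The triangle inequality
\beqs
\N{u_\eps}_{H^1_k(B_R)} \le \N{u_\eps-u_0}_{H^1_k(B_R)} + \N{u_0}_{H^1_k(B_R)}
\eeqs
reduces the task to bounding each piece by a multiple of $\N{f}_{L^2(B_R)}$. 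The second piece is immediate from the definition of $\CsHshort$ in \eqref{eq_Csol_def}, giving $\N{u_0}_{H^1_k(B_R)}\le \CsH \N{f}_{L^2(B_R)}$; this is the contribution corresponding to the $1$ inside the bracket of \eqref{eq_newbound1}.

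For the first piece I split the norm into its $\Din$ and $\Dout$ contributions and insert the interior corrector $\eps u_1^\eps$ in $\Din$ via the triangle inequality,
\beqs
\N{u_\eps-u_0}_{H^1_k(B_R)}
\le
\N{u_\eps-u_0-\eps u_1^\eps}_{H^1_k(\Din)}
+ \eps \N{u_1^\eps}_{H^1_k(\Din)}
+ \N{u_\eps-u_0}_{H^1_k(\Dout)}.
\eeqs
The first and third summands on the right are controlled directly by the second inequality \eqref{eq_homo_thm_2} of Theorem \ref{main_theorem_homogenization}. Dividing \eqref{eq_homo_thm_2} through by $k$ and using the standing hypothesis $kR_0\ge \Cwave$ to absorb the resulting factor of $1/k\le R_0/\Cwave$ into the constants yields
\beqs
\N{u_\eps-u_0-\eps u_1^\eps}_{H^1_k(\Din)} + \N{u_\eps-u_0}_{H^1_k(\Dout)}
\le C\,\Clayerh\,\bigl((k\eps)^{1/2}+k\eps\bigr)\,\CsH\,\N{f}_{L^2(B_R)},
\eeqs
which is exactly the second summand inside the bracket of \eqref{eq_newbound1}.

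The remaining ingredient is to bound the corrector term $\eps\N{u_1^\eps}_{H^1_k(\Din)}$. Since $u_1^\eps$ is constructed from $\nabla u_0$ and $\eps$-periodic cell correctors whose $W^{1,\infty}$ regularity is ensured by the last bullet of Condition \ref{condition_periodic_patterns}, a direct computation of derivatives (the prefactor $\eps$ exactly cancels the $1/\eps$ blow-up of $\nabla u_1^\eps$) gives $\eps\N{u_1^\eps}_{H^1_k(\Din)} \lesssim \N{u_0}_{H^1_k(B_R)} + \eps\N{u_0}_{H^2(B_R)}$. Applying standard elliptic regularity for the homogenized transmission problem (which has constant, uniformly elliptic coefficients and the $C^{1,1}$ interface $\Gamma$) bounds $\N{u_0}_{H^2(B_R)}$ by $C(k^2\N{u_0}_{L^2(B_R)}+\N{f}_{L^2(B_R)})\le C(k\CsH+1)\N{f}_{L^2(B_R)}$; under the standing assumption $k\eps\le 1$ (otherwise the homogenization summand already dominates and \eqref{eq_newbound1} is trivial), this collapses to $\eps\N{u_1^\eps}_{H^1_k(\Din)}\le C\CsH\N{f}_{L^2(B_R)}$, which is absorbed in the $1$ inside the bracket of \eqref{eq_newbound1}.

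Collecting the three bounds, dividing by $\N{f}_{L^2(B_R)}$, and taking the supremum over $f$ then yields \eqref{eq_newbound1}; the extraneous $1/\hn_{\min}$ prefactor in \eqref{eq_newbound1} is absorbed into $\Cfivenew$. The main (essentially bookkeeping) obstacle is the corrector estimate in the final step, which relies both on the elliptic regularity of the homogenized problem and on the piecewise smoothness of the cell correctors provided by Condition \ref{condition_periodic_patterns}. A minor subtlety is that $\Cse$ is defined as a supremum over general $F\in(H^1_k(B_R))'$ while the argument above treats only $L^2$ right-hand sides; as in the derivation of Theorem \ref{main_theorem_stability} from Theorem \ref{main_theorem_stability2}, this is handled by the same density/duality argument and does not affect the qualitative dependence on the parameters.
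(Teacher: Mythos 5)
Your plan has the right overall architecture and matches the paper's: pass to $L^2$ data $f$, expand $\|u_\eps\|_{H^1_k}$ by the triangle inequality into a homogenization-error piece (controlled by Theorem \ref{main_theorem_homogenization}), a corrector piece $\eps\|u_1^\eps\|_{H^1_k(\Din)}$, and a piece $\|u_0\|_{H^1_k}$, and then convert back to a bound on $\Cse$ via the G\aa rding/duality argument of Lemma \ref{lem:H1}. However, there is a genuine bookkeeping error in the powers of $k$ that, as written, leaves you with a bound a full factor of $k$ worse than \eqref{eq_newbound1}. The issue appears in several places, and they all stem from the same source: for $F(v)=\int_{B_R}f\bar v$ one has $\|F\|_{(H^1_k(B_R))'}\leq k^{-1}\|f\|_{L^2(B_R)}$, so the intermediate goal must be the bound $\|u_\eps\|_{H^1_k(B_R)}\lesssim \frac{\CsHshort}{k}\big[1+\Clayerh\big((k\eps)^{1/2}+k\eps\big)\big]\|f\|_{L^2(B_R)}$ \emph{with} the factor $1/k$. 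This $1/k$ is precisely what cancels the factor of $k$ that Lemma \ref{lem:H1} introduces when upgrading from $L^2$ data to $(H^1_k)'$ data. You instead ``absorb the factor of $1/k\leq R_0/\Cwave$ into the constants'' after dividing \eqref{eq_homo_thm_2} by $k$, and separately write $\|u_0\|_{H^1_k}\leq\CsHshort\|f\|_{L^2}$ (it should be $\leq\CsHshort\|F\|_{(H^1_k)'}\leq\frac{\CsHshort}{k}\|f\|_{L^2}$). Each of these throws away a $1/k$ that you cannot get back: applying Lemma \ref{lem:H1} to the bound you obtain, $\|u_\eps\|_{H^1_k}\lesssim\CsHshort[1+\Clayerh(\cdots)]\|f\|_{L^2}$, produces $\Cse\lesssim\frac{1}{\hn_{\min}}\big(1+k\,\CsHshort[1+\Clayerh(\cdots)]\big)$, not \eqref{eq_newbound1}. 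The closing remark that the $L^2\to(H^1_k)'$ passage ``does not affect the qualitative dependence on the parameters'' is exactly where this is hidden -- it \emph{does} introduce a factor of $k$, which is why the $1/k$ must be retained upstream.

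The fix is to keep the $1/k$ throughout, as the paper does. Concretely: \eqref{eq_homo_thm_2} divided by $k$ controls the $\Din$- and $\Dout$-error terms by $\frac{\CsHshort}{k}\Clayerh((k\eps)^{1/2}+k\eps)\|f\|_{L^2}$; the definition of $\CsHshort$ together with $\|F\|_{(H^1_k)'}\leq k^{-1}\|f\|_{L^2}$ gives $\|u_0\|_{H^1_k}\leq\frac{\CsHshort}{k}\|f\|_{L^2}$; and for the corrector term you can simply cite the already-proved bound \eqref{eq_u1_H1} together with \eqref{eq_u0_H2}, yielding $\eps\|u_1^\eps\|_{H^1_k(\Din)}\lesssim\eps(1+(k\eps)^{-1})\CsHshort\|f\|_{L^2}=\frac{\CsHshort}{k}(1+k\eps)\|f\|_{L^2}$, rather than re-deriving a corrector bound from the $W^{1,\infty}$ regularity of $\hchi_j$ and elliptic regularity for $u_0$. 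Summing these (and absorbing the $(1+k\eps)$ into the $\Clayerh((k\eps)^{1/2}+k\eps)$ term since $\Clayerh\gtrsim 1$) gives the desired $\frac{\CsHshort}{k}[1+\Clayerh(\cdots)]\|f\|_{L^2}$, and then Lemma \ref{lem:H1} delivers \eqref{eq_newbound1} cleanly, including the $1/\hn_{\min}$ prefactor (which is not ``extraneous'' -- it comes directly from $\min(A_{\min},n_{\min})=\hn_{\min}$ in Lemma \ref{lem:H1}, using $A_{\min}\geq 1$).
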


Corollary \ref{corollary_stability} shows that if $k\eps$ is sufficiently small (depending on $k$),
then the problem with coefficients $\MA_\eps$ and $n_\eps$ inherits the behaviour of the solution
operator for the homogenized problem.

\subsubsection{Discussion of the novelty and context of Theorem \ref{main_theorem_homogenization}}
\label{sec:homo_discuss}

The first thing to highlight about Theorem \ref{main_theorem_homogenization} is that it is
a $k$-explicit analogue of the $H^1$ error bound in \cite[Theorem 2]{CaGuMo:16}.
More precisely, \cite[Theorem 2]{CaGuMo:16} proves the bound 
\beqs
\|u_\eps-u_0- \eps u_1^\eps\|_{H^1_k(\Din)}
+
\|u_\eps-u_0\|_{H^1_k(\Dout)}
\leq
C(k) \eps^{1/2} \|u_0\|_{H^2(B_R)},
\eeqs
for some unspecified $C(k)$, with this bound valid for sufficiently small $kR$ and then arbitrarily
small $\eps/R$. In contrast, in Theorem \ref{main_theorem_homogenization}, $kR$ can be arbitrarily large, and $\eps/R$ can be tied to $kR$. 
We recall that (i) a main novelty of \cite[Theorem 2]{CaGuMo:16} is that it uses boundary correctors for homogenization of a transmission problem (as opposed to using boundary correctors for problems on bounded domains with Dirichlet or Neumann boundary conditions; see, e.g., \cite{BeLiPa:78, MoVo:97, MoVo:97a, Gr:04, ZhPa:05, Gr:06, OnVe:07, JiKoOl:12, KeLiSh:13, Su:13, KeLiSh:14,SeDeSc:18}), and (ii) 
\cite[Theorem 2]{CaGuMo:16} also proves the $L^2$ error estimate $\|u_\eps -u_0\|_{L^2(B_R)}\leq C(k) \eps\|u_0\|_{H^2(B_R)}$, but for brevity we have not pursued here obtaining a $k$-explicit analogue of this result.

The main obstacle for obtaining homogenization results about the Helmholtz equation for
large $kR$ is that the first step of the homogenization procedure relies on a uniform-in-$\eps$
bound on the $H^1$ norm of $u_\eps$, which allows one to extract a weakly-converging subsequence.
Such estimate is immediate for the coercive PDE $-\nabla \cdot(\MA_\eps\nabla u) =f$, since the
coercivity constant only depends on $A_{\min}$, which is independent of $\eps$.
In Appendix \ref{app:small_k}, we recall how the $b(\cdot,\cdot)$ defined by \eqref{eq:sesqui}
is coercive if $kR$ is sufficiently small, with both the coercivity constant (and hence $\Csol$)
and the constant for ``sufficiently small'' independent of $\eps$ and depending on $A_{\min}$
and $n_{\max}$; see Lemma \ref{lem:coercivity} below. Therefore, homogenization results for
the Helmholtz equation can be obtained for $kR$ sufficiently small, and this is implicitly
what \cite{CaGuMo:16} do. 
\footnote{The paper \cite{CaGuMo:16} cites \cite[Theorem 5.26]{CaCo:14} as a reference for $\Csol$
being independent of $\eps$ and depending on $\MA_\eps$ and $n_\eps$ only through
$A_{\max}, A_{\min}, n_{\max},$ and $n_{\min}$ (see \cite[Bottom of Page 2539 and
top of Page 2540]{CaGuMo:16}). Actually, \cite[Theorem 5.26]{CaCo:14} records how Fredholm
theory proves an a priori bound on the solution of the transmission problem; however this
theory (relying on abstract functional analysis arguments) is unable to give the constant
in this bound. The arguments in \cite{CaGuMo:16} hold, however, with the results of
Appendix \ref{app:small_k} replacing reference to  \cite[Theorem 5.26]{CaCo:14}, and under
the explicit assumption that $kR$ is sufficiently small.}
In contrast, if $kR$ is sufficiently large, then  $b(\cdot,\cdot)$ is not coercive (see, e.g., \cite[\S6.1.6]{Sp:15}); furthermore, at least when $\MA_\eps$ and $n_\eps$ are smooth, \cite{GaSpWu:20} shows that $\Csol$ depends on global properties of $\MA_\eps$ and $n_\eps$ (more precisely, $\Csol$ is proportional to the length of the longest ray in $B_R$). 
Therefore, obtaining a uniform-in-$\eps$ bound on $\Csol$ for arbitrary $\MA_\eps$ and $n_\eps$ and arbitrary $kR$ is very challenging.
The present paper bypasses this fundamental problem by considering the restricted class of $\MA_\eps$ and $n_\eps$ of Condition 
\ref{condition_periodic_patterns}, for which Theorem \ref{main_theorem_stability} gives
the required 
bound that is uniform in $\eps$ and valid for large $kR$.
Another place where this problem is bypassed is \cite{BoFe:04}:~this paper considers homogenization of particular 2-d Helmholtz transmission problem (where the contrast in the coefficients depends on $\eps$). The result of \cite{BoFe:04} is that, via a contradiction argument, the sequence $u_\eps$ is bounded in $L^2$ and two-scale converges to the homogenized solution (albeit without an error estimate); this result is valid for fixed $k$, excluding a countable set, but crucially not assumed to be small.  

Finally, we note that another difference between the analysis in the present paper and that of \cite{CaGuMo:16} is that, for simplicity, \cite{CaGuMo:16} assume that $\hMA$, $\hn$, and $\Din$ are $C^\infty$, but the present paper makes much weaker regularity assumptions on $\hMA$, $\hn$, and $\Din$.
In particular, we allow for piecewise smooth $\hMA$ and $\hn$; recall from Remark \ref{rem:physical} that these are particularly important in applications. 

\subsubsection{Discussion of the $k$- and $\eps$-dependence of the bounds in Theorem \ref{main_theorem_homogenization} and Corollary \ref{corollary_stability}}

The bound \eqref{eq_homo_thm_1} shows that the ``relative error"
\beq\label{eq_rel_error}
\frac{k \|u_\eps-u_0- \eps u_1^\eps\|_{H^1_k(\Din)} + k \|u_\eps-u_0\|_{H^1_k(\Dout)}
}{
 \N{u_0}_{H^2(B_R)} + k \N{u_0}_{H^1_k(B_R)}
 }
\eeq
is controllably small if $\Clayerh (k\eps)^{1/2}$ is small, independently
of $k$ and $\eps$, i.e., if $k\eps \leq c (kR)^{-6}$ (with $c$ independent of $k$ and $\eps$) by the definition of
$\Clayer$ \eqref{main_eq_stability}. While this is certainly pessimistic, to our knowledge it is the first
homogenization result for the Helmholtz equation that is explicit in both $\eps$ and $k$. Note that $\Clayer \gtrsim kR$ (see the discussion after
\eqref{eq:1knorm}), and therefore even if we obtained a sharper upper bound on $\Clayer$,
the bound \eqref{eq_homo_thm_1} would still only show that \eqref{eq_rel_error} is small when
$k\eps \leq c (kR)^{-2}$. (Note that the factor of $\Clayer$ in the bound on \eqref{eq_rel_error} from \eqref{eq_homo_thm_1} arises from the standard treatment of $u_\eps-u_0$ minus the correctors as a solution of the PDE -- in our case the Helmholtz equation; see \eqref{tmp_duality_argument} and \eqref{eq_Weps} below.)

The bound \eqref{eq_newbound1} shows that if $\Clayerh (k\eps)^{1/2}$ is bounded independently
of $k$ and $\eps$, then 
\beq\label{eq:compare0}
\Cse \lesssim \CsH.
\eeq
From \eqref{main_eq_estimate_Cse} and \eqref{eq_CsolH}, both $\Cse$ and $\CsH$ are $\leq \Clayer$.
Therefore, the bound \eqref{eq:compare0} is only interesting when $\CsH \ll \Clayer$. 
A concrete case when this is true is when $\Din$ is star-shaped with respect to a point,
since then $\CsH\lesssim kR$ by \eqref{eq_Cstar} (which is the smallest possible
growth of $\Csol$ with $kR$), whereas $\Clayer \lesssim (kR)^3$ by its definition
\eqref{main_eq_stability}.

\section{Preliminary results} \label{sec:prelim}

\subsection{Morawetz-type identities and associated results}\label{sec:4}

When writing these identities, it is convenient to use the notation that $\langle \textbf{a},\textbf{b}\rangle :=\sum_{j=1}^d a_j \overline{b_j}$ for $\textbf{a}, \textbf{b}\in \Com^d$.
Here and in the rest of the paper, we use the convention that all indices are lowered, and repeated indices $i,j,$ and $\ell$, are summed over, but \emph{not} repeated indices $d$.

\ble[Morawetz-type identity]
Let $\domaingen\subset \Rea^d$.
Let $v\in C^2(\domaingen)$, $\MA\in C^1(\domaingen, \Sym)$, $n\in C^1(\domaingen,\Rea)$,
$\bZ\in C^1(\domaingen, \Rea^d)$, $\alpha\in C^2(\domaingen,\Rea)$, and
$\beta\in C^1(\domaingen,\Rea)$. Let 
\beq
\opL_{\MA,n} v:= \nabla\cdot (\MA \gv) + k^2 n\, v
\quad\tand\quad
\label{eq:cZ}
\cZ v:= \bZ\cdot \gv - \ri k \beta v + \alpha v.
\eeq
Then, in $D$, and where $\partial \bZ$ is the derivative matrix of $\bZ$, i.e.~$(\partial \bZ)_{ij} = \partial_i Z_j$,
\begin{align}\nonumber 
2 \Re \big\{\overline{\cZ v } \,\opL_{\MA,n} v \big\} = &\, \nabla \cdot \bigg[ 2 \Re\big\{\overline{\cZ v}\, A \gv\big\} + \bZ\Big(k^2n \nvs - 
\langle \MA\gv, \gv\rangle
\Big)- \nabla\alpha \nvs\bigg] 
- (2\alpha -\nabla\cdot\bZ) 
\langle \MA\gv, \gv\rangle 
\\
&\qquad+ 
\Big\langle \big((\bZ\cdot\nabla)\MA\big)\gv,\gv\Big\rangle
 - \Big( (\nabla\cdot\bZ-2\alpha) n + \bZ\cdot \nabla n\Big) k^2 \nvs -2 \Re\big\langle \MA \gv, \partial \bZ\, \gv\big\rangle\nonumber\\
&\qquad -2 \Re\big\{\ri k \vb
\langle \MA\gv, \nabla \beta\rangle
\big\}-2\Re \big\{\vb \langle (\MA-\MI) \gv, \nabla\alpha\rangle\big\} + \Delta \alpha\nvs.
\label{eq:morid1}
\end{align}
\ele

\bpf
Splitting $\cZ v$ up into its component parts, we see that the identity \eqref{eq:morid1}
is the sum of the following four identities:
\begin{align}\nonumber
2 \Re \big\{\bZ\cdot \overline{\nabla v} \,\opL_{\MA,n} v \big\} &= \nabla \cdot \bigg[ 2 \Re\big\{ \bZ\cdot \overline{\nabla v}\,\MA\nabla v \big\} + \bZ\big(k^2n\, \nvs - \langle \MA\gv,\gv\rangle\big) \bigg]+ 
\nabla\cdot\bZ\langle \MA\gv,\gv\rangle \\
&\quad
-2 \Re\big\langle \MA \gv, \partial \bZ\, \gv\big\rangle
+ \big\langle \big((\bZ\cdot\nabla)\MA\big)\gv,\gv\big\rangle - (\nabla\cdot\bZ\,n  +\bZ\cdot\nabla n)k^2 \nvs,
\label{A}
\end{align}
\beq\label{B}
2 \Re \big\{ \ri k \beta \overline{v} \,\opL_{\MA,n} v\big\} = \nabla \cdot \big[ 2 \Re \big\{\ri k \beta \overline{v}\, \MA\nabla v\big\}\big]-2 \Re\big(\ri k \vb\langle \MA\gv, \nabla \beta\rangle\big) ,
\eeq
\beq\label{C}
2 \Re \big\{\alpha \overline{v} \,\opL_{\MA,n} v \big\} = \nabla \cdot \big[ 2 \Re \{\alpha \overline{v}\, \MA\nabla v \}\big] + 2\alpha k^2 n\nvs
-2\alpha\langle \MA\gv,\gv\rangle- 2\Re \big\{\vb \langle \MA \gv, \nabla\alpha\rangle\big\}, \quad\tand
\eeq
\beq\label{D}
0 = -\nabla\cdot\big[\nabla\alpha \nvs\big] + 2 \Re \big\{\vb \nabla\alpha\cdot \nabla v\big\} + \Delta \alpha \nvs.
\eeq

To prove \eqref{B}, \eqref{C}, and \eqref{D}, expand the divergences on the right-hand sides (remembering that $\alpha$ and $\beta$ are real and that $\MA$ is symmetric, so $\langle \MA \bxi,\bxi\rangle$ is real for any $\bxi\in \Com^d$).

The basic ingredient of \eqref{A} is the identity
\beq\label{basic}
(\overline{\bZ\cdot \nabla v})\nabla\cdot(\MA\gv) = \nabla \cdot \big[(\bZ\cdot \overline{\nabla v})\MA\nabla v\big] - 
\langle \MA\gv,\partial \bZ\,\gv\rangle- \big((\bZ\cdot\nabla)\gvb\big)\cdot \MA\gv.
\eeq
To prove this, expand the divergence on the right-hand side and use the fact that the second derivatives of $v$ commute.
We would like each term on the right-hand side of \eqref{basic} to either be single-signed or be the divergence of something.
To deal with the final term we use the identity
\beq\label{Melenktrick}
2 \Re \big\{ (\bZ\cdot \nabla)\gvb \cdot \MA\gv\big\}= \nabla \cdot \big[ \bZ\langle \MA\gv,\gv\rangle \big] - (\nabla\cdot\bZ) \langle \MA\gv,\gv\rangle -
\big\langle \big((\bZ\cdot\nabla)\MA\big)\gv,\gv\big\rangle, 
\eeq
which can be proved by expanding the divergence on the right-hand side and using the fact that $\MA$ is symmetric.
Therefore, taking twice the real part of \eqref{basic} and using \eqref{Melenktrick} yields
\begin{align}\nonumber
2 \Re \big\{(\bZ\cdot \gv)\nabla\cdot(\MA\gv)\big\} =&  \nabla \cdot \bigg[ 2 \Re\big\{ (\bZ\cdot \overline{\nabla v})\,\MA\nabla v \big\} -\bZ\langle \MA\gv,\gv\rangle\bigg] + (\nabla\cdot\bZ)\langle \MA\gv,\gv\rangle \\&\qquad\qquad\qquad+ \big\langle \big((\bZ\cdot\nabla)\MA\big)\gv,\gv\big\rangle
-2\Re\langle \MA\gv,\partial \bZ\,\gv\rangle.  \label{A2}
\end{align}
Now add $k^2$ times
\beqs
2 \Re \left\{ (\bZ\cdot \overline{\nabla v})nv  \right\}= \nabla \cdot \big[ \bZ n \nvs \big] -(\nabla\cdot\bZ)n\nvs - \bZ\cdot\nabla n \nvs
\eeqs
(which is the analogue of \eqref{Melenktrick} with the vector $\nabla v$ replaced by the scalar $v$ and the matrix $\MA$ replaced by the scalar $n$) to \eqref{A2} to obtain \eqref{A}. 
\epf

\bre[Bibliographic remarks on Morawetz-type identities for $\opL_{\MA,n}$]\label{rem:biblio}
Multiplying $\Delta v$ by a derivative of $v$ goes back to Rellich \cite{Re:40, Re:43}, and multiplying $\nabla\cdot(\MA\gv)$ by a derivative of $v$ goes back to H\"ormander \cite{Ho:53} and Payne and Weinberger \cite{PaWe:58} (e.g., the identity \eqref{eq:morid1} with $\bZ= \bx$ and $n$, $\alpha$, and $\beta$ all equal zero appears as \cite[Equation 2.4]{PaWe:58}).

In the context of the Helmholtz equation, the identity \eqref{eq:morid1} with $\MA=\MI$, $n= 1$, $\bx$ replaced by a general vector field, and $\alpha$ and $\beta$ replaced by general scalar fields was the heart of Morawetz's paper \cite{Mo:75} (following the earlier work \cite{Mo:61, MoLu:68}); hence why we call \eqref{eq:morid1} a ``Morawetz-type" identity.
The identity \eqref{eq:morid1} with $\bZ=\bx$, $\MA$ variable, and $n= 1$ was used by Bloom in \cite{Bl:73}, and the identity \eqref{eq:morid1} with $\bZ=\bx$, $\MA=\MI$, and variable $n$ was used in Bloom and Kazarinoff in \cite{BlKa:77}. 
\ere

\bre[The origin of the condition $(\MA)_{d \ell}=0$ for $\ell= 1,\ldots, d-1$ in Condition \ref{cond:1}]\label{rem:Adell}
Our argument below requires that the term $2\Re\langle \MA\gv,\partial \bZ\,\gv\rangle$ be bounded below by a multiple of $|\partial_d v|^2$ when $\bZ= \be_d x_d$. Under this choice of $\bZ$,
\beq\label{eq:keyZA}
2\Re\big\langle \MA\gv,\partial \bZ\,\gv\big\rangle = 2 \Re\left\{\conj{\partial_d v}\left(\sum_{\ell=1}^d (\MA_{d\ell}) \partial_\ell v\right)\right\},
\eeq
hence we impose that $(\MA)_{d \ell}=0$ for $\ell= 1,\ldots, d-1$.
\ere

For notational convenience, given $\MA, n, \bZ, \beta,$ and $\alpha$, we let 
\beqs
\bQ(v):=2 \Re\big(\overline{\cZ v}\, \MA \gv\big) + \bZ\big(k^2n \nvs - \big\langle \MA\gv, \gv\big\rangle\big) - \nabla\alpha \nvs,
\eeqs
and 
\begin{align}\nonumber 
P(v) :=& 2 \Re \big(\overline{\cZ v } \,\opL_{\MA,n} v \big) + (2\alpha -\nabla\cdot\bZ) \langle \MA\gv, \gv\rangle 
\\
&\qquad- 
\Big\langle \big((\bZ\cdot\nabla)\MA\big)\gv,\gv\Big\rangle
 + \Big( (\nabla\cdot\bZ-2\alpha) n + \bZ\cdot \nabla n\Big) k^2 \nvs +2 \Re\big\langle \MA \gv, \partial \bZ\, \gv\big\rangle\nonumber\\
&\qquad +2 \Re\big(\ri k \ub
\langle \MA\gv, \nabla \beta\rangle
\big)+2\Re \big(\vb \langle (\MA-\MI) \gv, \nabla\alpha\rangle\big) - \Delta \alpha\nvs,\label{eq:P}
\end{align}
so that the identity \eqref{eq:morid1} becomes
$\nabla \cdot \bQ(v)= P(v).$

\begin{lemma}[The Morawetz-type identity \eqref{eq:morid1} integrated over a ball]
\label{lem:morid1int}
Recall the notation that $B_R:= \{\bx : |\bx|<R\}$ and $\Gamma_R := \partial B_R$. 
On $\Gamma_R$, let $\partial_r $ denote the normal derivative and let $\nabla_S$ denote the surface gradient. 
Let $v \in H^2(B_R)$, $\MA\in C^{1}(\overline{B_R},\SPD)$, $n\in C^{1}(\overline{B_R},\Rea)$, $\bZ\in C^1(\overline{B_R}, \Rea^d)$, $\beta\in C^1(\overline{B_R},\Rea)$, and $\alpha\in C^2(\overline{B_R},\Rea)$. Suppose further that, in a neighbourhood of $\Gamma_R$, $\MA=\MI$, $n=1$, $\bZ= \bx$, and $\alpha$ is constant.
Then
\beq
\int_{B_R} P(v)
=\int_{\GammaR} R
\left( \left|\partial_r v\right|^2 - |\nabla_S u|^2 + k^2 |u|^2\right)   - 2 k \, \Im \int_{\GammaR} \beta\vb \,\partial_r v + 2\alpha\,\Re \int_{\GammaR} \vb\, \partial_r v
\label{eq:morid1int}
\eeq
where $P(v)$ is defined by \eqref{eq:P}. 
\ele

\bpf
Under the assumption that $v\in \DOmegabar$, \eqref{eq:morid1int} follows from integrating the identity \eqref{eq:morid1} over $B_R$ and using the divergence theorem; the result then follows from the density of $\DOmegabar$ in $H^2(B_R)$ and the fact that \eqref{eq:morid1int} is continuous in $v$ with respect to the topology of $H^2(B_R)$. The integrated versions of similar Morawetz-type identities can be found in, e.g., \cite[Lemma 1]{NgVo:12} and \cite[Lemma 4.2]{GrPeSp:19}.
\epf

The following lemma deals with the contribution from $\GR$ in \eqref{eq:morid1int} when $v$ equals a solution of the Helmholtz equation satisfying the Sommerfeld radiation condition.

\ble[Inequality on $\GammaR$ for outgoing Helmholtz solutions]\label{lem:2.1}
Let $u$ be a solution of the homogeneous Helmholtz equation in
$\Rea^d\setminus \overline{B_{R_0}}$, for some $R_0>0$, satisfying the
Sommerfeld radiation condition \eqref{eq:src}.
Let $\alpha\in \Rea$ with $2\alpha\geq d-1$. Then, for $R>R_0$, 
\beqs
\int_{\GammaR} R\left( \left|
\partial_r u
\right|^2 - |\nabla_S u|^2 + k^2 |u|^2\right)   - 2 k R\, \Im \int_{\GammaR} \bar{u}\,\partial_r u 
+ 2\alpha\,\Re \int_{\GammaR}\bar{u}\,\partial_r u
 \leq 0,
\eeqs
where $\nabla_S$ is the surface gradient on $r=R$.
\ele

\bpf[References for proof of Lemma \ref{lem:2.1}]
See \cite[Lemma 2.1]{ChMo:08} or \cite[Lemma 2.4]{SpChGrSm:11} for the proof when $d=2,3$; the proof for general $d\geq 2$ is very similar. 
This result is also essentially contained in \cite[proof of Proposition 1]{NgVo:12}.
\epf


\subsection{Background results for the Helmholtz transmission problem}

Since $\DtN: H^{1/2}(\Gamma_R)\to H^{-1/2}(\Gamma_R)$ is continuous, the sesquilinear form $b(\cdot,\cdot)$ of the transmission problem defined by \eqref{eq:sesqui} is continuous on $H^1(B_R)$.

\ble[Well-posedness from an a priori bound]\label{lem:Fred}
Suppose that, under the assumption of existence, the solution of the transmission problem \eqref{eq:vf} with 
$F(v) = \int_{B_R}f \,\overline{v}$ and $f\in L^2(B_R)$ satisfies 
\beq\label{eq:bound_lem1}
\|u\|_{H^1_k(B_R)}
\leq C(\MA,n, k, R) \N{f}_{L^2(B_R)} \quad \tfa k\geq k_0,
\eeq
for some $C(\MA,n,k,R)>0$ and $k_0>0$.
Then the Helmholtz transmission problem of Definition \ref{def:transmission} is well-posed: i.e.,
given $F\in (H^1(B_R))'$, let $\widetilde{u}$ satisfy the variational problem \eqref{eq:vf};
then, if $k\geq k_0$, $\widetilde{u}$ exists, is unique, and the map $F\mapsto \widetilde{u}$
is continuous (i.e., $\Cs<\infty$).
\ele

\bpf
The inequalities
\beq\label{eq:TR}
\Re \big\{ - \langle \DtN  \phi,\phi\rangle_{\Gamma_R}\big\} \geq 0 \quad\tfa \phi \in H^{1/2}(\Gamma_R)
\eeq
(see, e.g., \cite[Lemma 3.3]{MeSa:10}),
\eqref{eq:nlimits}, and \eqref{eq:Alimits} imply that, for any $v\in H^1(B_R)$, 
\begin{align}
\Re \sesqui(v,v)
\geq
A_{\min} \N{\gv}^2_{L^2(B_R)} - k^2 \varmax \N{v}^2_{L^2(B_R)}
\label{eq:Garding}
\geq A_{\min} \N{v}^2_{H^1_k(B_R)} - k^2 (\varmax + A_{\min})\N{v}^2_{L^2(B_R)};
\end{align}
i.e.~$\sesqui(\cdot,\cdot)$ satisfies a G\aa rding inequality.
A bound on the solution in terms of the data such as \eqref{eq:bound_lem1}, under the assumption of existence, shows that the solution of the boundary value problem (if it exists) is unique. Since $\sesqui(\cdot,\cdot)$ is continuous and satisfies a G\aa rding inequality, the result then follows from Fredholm theory; see, e.g., \cite[Theorems 2.27 and 2.34]{Mc:00}, \cite[\S6.2.3]{Ev:98}. 
\epf

\begin{lemma}
[Well-posedness and bound for $f\in L^2$ implies well-posedness and bound for $F\in (H^1)'$]
\label{lem:H1}
Assume that $\MA$ and $n$ are such that, given $f\in L^2(B_R)$, the solution of the transmission problem of Definition \ref{def:transmission} with $F(v) = \int_{B_R}f \,\overline{v}$  exists, is unique, and satisfies the bound \eqref{eq:bound_lem1}
for some $C(\MA,n,k,R)>0$ and $k_0>0$.
Given $F\in (H^1(B_R))'$, let $\widetilde{u}$ satisfy the variational problem \eqref{eq:vf}.
Then $\widetilde{u}$ exists, is unique, and satisfies the bound 
\beqs
\N{\widetilde{u}}_{H^1_k(B_R)} \leq \frac{1}{\min(A_{\min},n_{\min})}\Big( 1 + 2n_{\max}  k \, C(\MA,n,k, R)\Big) \N{F}_{(H^1_k(B_R))'} \quad\tfa k\geq k_0.
\eeqs
\end{lemma}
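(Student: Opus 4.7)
The plan is to dispose of existence, uniqueness, and continuity of the solution map via Lemma \ref{lem:Fred} (which directly turns the hypothesized $L^2$-data bound into well-posedness for every $F\in (H^1(B_R))'$), and then to obtain the quantitative bound by the splitting $\widetilde u = u_F + v$, where $u_F$ solves a coercive auxiliary problem driven by $F$ and the remainder $v$ solves the original transmission problem with an \emph{explicit $L^2$ right-hand side} to which the hypothesis \eqref{eq:bound_lem1} applies.

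To construct $u_F$, I would introduce the sign-flipped sesquilinear form
\[
\tilde b(v,w) := \int_{B_R} (\MA\grad v)\cdot\overline{\grad w} + k^2 n\, v\,\overline w - \big\langle \DtN v,w\big\rangle_{\Gamma_R},
\]
which differs from $b$ only in the sign of its zeroth-order term. Continuity of $\tilde b$ on $H^1_k(B_R)$ is exactly the argument of Lemma \ref{lem:cont}, and the key gain is coercivity: since $\MA\succeq A_{\min}\MI$, since $n\geq n_{\min}>0$, and since $-\Re\langle\DtN\cdot,\cdot\rangle_{\Gamma_R}\geq 0$ by \eqref{eq:TR},
\[
\Re\,\tilde b(v,v) \geq A_{\min}\N{\grad v}^2_{L^2(B_R)} + n_{\min} k^2 \N{v}^2_{L^2(B_R)} \geq \min(A_{\min},n_{\min})\N{v}^2_{H^1_k(B_R)}.
\]
The Lax--Milgram theorem then produces a unique $u_F\in H^1(B_R)$ with $\tilde b(u_F,w)=F(w)$ for all $w\in H^1(B_R)$, and
\[
\N{u_F}_{H^1_k(B_R)} \leq \frac{\N{F}_{(H^1_k(B_R))'}}{\min(A_{\min},n_{\min})}.
\]

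Setting $v:=\widetilde u - u_F$ and using $b(\widetilde u,w)=F(w)=\tilde b(u_F,w)$, I would compute
\[
b(v,w) = \tilde b(u_F,w) - b(u_F,w) = 2k^2\int_{B_R} n\, u_F\,\overline w \qquad \text{for every } w\in H^1(B_R),
\]
so that $v$ solves the transmission problem with the $L^2$ datum $f=2k^2 n\, u_F$; existence of this $v$ is automatic from the constructions of $\widetilde u$ and $u_F$. Applying the hypothesis \eqref{eq:bound_lem1} and bounding $\N{nu_F}_{L^2}\leq n_{\max}\N{u_F}_{L^2}\leq k^{-1}n_{\max}\N{u_F}_{H^1_k}$ gives
\[
\N{v}_{H^1_k(B_R)} \leq C(\MA,n,k,R)\N{2k^2 n\,u_F}_{L^2(B_R)} \leq 2 k\, n_{\max}\, C(\MA,n,k,R)\N{u_F}_{H^1_k(B_R)}.
\]
The triangle inequality $\N{\widetilde u}_{H^1_k(B_R)}\leq \N{u_F}_{H^1_k(B_R)}+\N{v}_{H^1_k(B_R)}$ combined with the two displayed bounds yields precisely the constant $\tfrac{1}{\min(A_{\min},n_{\min})}\bigl(1+2n_{\max}kC(\MA,n,k,R)\bigr)$. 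The only substantive choice in the argument is the correct sign flip on the $k^2n$ term that makes $\tilde b$ coercive via \eqref{eq:TR}; once that is in place, every estimate is a one-line application of the hypothesis or of elementary inequalities, so there is no real obstacle.
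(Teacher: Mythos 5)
Your proposal is correct and reconstructs essentially the same argument that the paper only cites: the paper's proof defers to [GrPeSp:19, Lemma 5.1], which generalizes to variable coefficients the classical sign-flip decomposition from [ChMo:08] (write $\widetilde u$ as the sum of the solution of the coercive sign-flipped problem plus a remainder with $L^2$ data $2k^2 n u_F$), and your computation reproduces exactly the constant $\frac{1}{\min(A_{\min},n_{\min})}(1+2n_{\max}kC)$ claimed in the lemma. As a side remark, your decomposition $\widetilde u = u_F + v$ also gives existence and uniqueness of $\widetilde u$ directly from the hypothesis and Lax--Milgram, so the preliminary appeal to Lemma \ref{lem:Fred} is dispensable (though harmless).
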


\bpf[References for the proof]
This well-known result is proved in, e.g., \cite[Proof of Lemma 5.1]{GrPeSp:19} using the fact that $\sesqui(\cdot,\cdot)$ satisfies the G\aa rding inequality \eqref{eq:Garding}. 
\epf

In the rest of the paper, $|\cdot|_2$ denotes \emph{both} the Euclidean vector norm on $\Com^d$ \emph{and} the induced matrix norm on $\Com^{d\times d}$, and $\|\MA\|_{L^2(B_R, \Com^{d\times d})}:= \| |\MA(\cdot)|_2 \|_{L^2(B_R)}$ for $\MA\in \Com^{d\times d}$.

\ble[Solution of variational problems by approximation]\label{lem:basicapprox1}
Given coefficients $\MA$ and $n$ satisfying the conditions in Definition \ref{def:transmission}, assume that there exist sequences of coefficients $(\MA^\ell)_{\ell=0}^\infty$ and $(n^\ell)_{\ell=0}^\infty$ satisfying, for every $\ell$, the conditions in Definition \ref{def:transmission} (possibly with different limits $n_{\min}, n_{\max}, A_{\min},$ and $A_{\max}$ in the inequalities \eqref{eq:nlimits} and \eqref{eq:Alimits}), and such that (i)
\beqs
\N{\MA- \MA^\ell}_{L^2(B_R, \Com^{d\times d})}\leq \frac{1}{\ell} \quad\tand\quad \N{n-n^\ell}_{L^2(B_R)}\leq \frac{1}{\ell} \quad\tfa \ell,
\eeqs
and (ii) there exists an $\ell_0>0$ and $C(k,R)>0$ such that, for every $\ell\geq \ell_0$, the solution of the variational problem \eqref{eq:vf} with 
$F(v) = \int_{B_R}f \,\overline{v}$
and coefficients $\MA^\ell$ and $n^\ell$ exists, is unique, and satisfies the bound 
\beq\label{eq:basicapprox1}
\N{u}_{H^1_k(B_R)}\leq C(k,R) \N{f}_{L^2(B_R)}.
\eeq

Then the solution of the variational problem \eqref{eq:vf} with 
$F(v) = \int_{B_R}f \,\overline{v}$
and with coefficients $\MA$ and $n$ exists, is unique, and satisfies the bound \eqref{eq:basicapprox1}.
\ele

\bpf[References for the proof]
See, e.g., \cite[Pages 2901 and 2902]{GrPeSp:19}:~indeed, the proof goes through ad verbatim from the second paragraph of 
 \cite[Page 2901]{GrPeSp:19} to the end-of-proof symbol on \cite[Page 2902]{GrPeSp:19} with $A_\delta,n_\delta$ replaced by $\MA^\ell, n^\ell$.
\epf

Lemma \ref{lem:basicapprox1} holds with the $\|f\|_{L^2(B_R)}$ in the bound \eqref{eq:basicapprox1} replaced by $\|F\|_{(H^1_k(B_R))'}$ (with a different $C(k,R)$), thereby covering more general variational problems. In the rest of the paper, however,  we only use Lemma \ref{lem:basicapprox1} as stated, i.e.~applied to variational problems with right-hand sides of the form $\int_{B_R} f\vb$.

\subsection{Traces and weighted Sobolev spaces}
\label{sec:trace}

We repeatedly use the following standard trace result (see, e.g., \cite[Theorem 3.37]{Mc:00}).

\ble
\label{lem:trace}
If $D$ is a bounded $C^{\ell-1,1}$ domain and $1/2<s\leq \ell$, then the trace operator is bounded $H^s(D)\rightarrow H^{s-1/2}(\partial D)$.
\ele
We use also the multiplicative trace inequality for Lipschitz $D$, i.e.~that there exists
$C_{\rm tr}>0$ such that
\beq
\label{eq:multtrace}
\N{v}^2_{L^2(\partial \domain)}
\leq
C_{\rm tr}\left(
\frac{1}{\ell_D} \N{v}^2_{L^2(\domain)}
+
\N{v}_{L^2(\domain)}|v|_{H^1(\domain)}
\right) \quad\tfa v\in H^1(D),
\eeq
where $\ell_D$ is the diameter of $D$ (see, e.g.,
\cite[Theorem 1.5.1.10, last formula on Page 41]{Gr:85}
or \cite[Lemma 5.2]{HaJoNg:05}) and thus,
with $\|\cdot\|_{H^1_k(\domain)}$ defined by \eqref{eq:1knorm},
\beq
\label{eq:multtrace2}
k^{1/2}\N{v}_{L^2(\partial \domain)}
\lesssim
\N{v}_{H^1_k(\domain)} \quad\tfa v\in H^1(D).
\eeq
We define $\|\cdot\|_{H_k^s(\partial \domain)}$ for $s=1$ by
\beqs
 \|\phi\|_{H^1_k(\partial \domain)}^2 := \|\nabla_{\partial \domain} \phi\|^2_{L^2(\partial \domain)} +k^2\|\phi\|^2_{L^2(\partial \domain)},
\eeqs
and for $0<s<1$ by interpolation, choosing the specific norm given by the complex interpolation method (equivalently, by real methods of interpolation appropriately defined and normalised; see, e.g., \cite[Remark 3.6]{ChHeMo:15}). We then define the norms on $H^s(\partial \domain)$ and $H^s_k(\partial \domain)$ for $-1\leq s<0$ by duality,
\beqs
\|\phi\|_{H^s(\partial \domain)} := \sup_{0\neq\psi\in H^{-s}(\partial \domain)}\, \frac{|\langle\phi,\psi\rangle_{\partial \domain}|}{\|\psi\|_{H^{-s}(\partial \domain)}} \quad \mbox{and} \quad \|\phi\|_{H_k^s(\partial \domain)} := \sup_{0\neq\psi\in H^{-s}(\partial \domain)}\, \frac{|\langle\phi,\psi\rangle_{\partial \domain}|}{\|\psi\|_{H_k^{-s}(\partial \domain)}},
\eeqs
for $\phi\in H^s(\partial \domain)$,
where $\langle\phi,\psi\rangle_{\partial \domain}$ denotes the standard duality pairing that reduces to $(\phi,\psi)_{\partial \domain}$, the inner product on $L^2(\partial \domain)$, when $\psi\in L^2(\partial \domain)$. In the terminology of \cite[Remark 3.8]{ChHeMo:15}, with the norms we have selected, 
$\{H^s_k(\partial \domain):-1\leq s\leq 1\}$ are {\em exact interpolation scales}.

Finally, we need the weighted analogue of Lemma \ref{lem:trace} with $s=1$; see, e.g., \cite[Theorem 5.6.4]{Ne:01}.

\ble[Trace and extension in $k$-weighted spaces]\label{lem:trace_weight}
If $D$ is Lipschitz, then given $k_0>0$ there exists $C$, depending on $k_0$ but independent of $k$, such that 
\begin{equation*}
\N{ v}_{H^{1/2}_k(\partial D)} \leq C \|v\|_{H^1_k(D)}\quad \tfa v \in H^1(D) \tand k\geq k_0.
\end{equation*}
Furthermore, there exists an extension operator $\EE: H^{1/2}(\partial D) \rightarrow H^1(D)$ such that, given $k_0>0$ 
there exists $C'$, depending on $k_0$ but independent of $k$, such that 
\begin{equation}
\label{eq_stability_extension}
\|\EE(\phi)\|_{H^1_k(D)} \leq C' \|\phi\|_{H^{1/2}_k(\partial D)} \quad\tfa \phi \in H^{1/2}(\Gamma).
\end{equation}
\ele

\section{Proof of Theorem \ref{main_theorem_stability} (the well-posedness result)}

We follow the steps outlined in \S\ref{sec:idea}. In \S\ref{sec:smooth} we prove the
analogue of Theorem \ref{main_theorem_stability2} for smooth coefficients satisfying
Condition \ref{cond:smooth}. In \S\ref{sec:approx} we show how coefficients satisfying
Condition \ref{cond:1} can be approximated by coefficients satisfying Condition \ref{cond:smooth}.
Theorem \ref{main_theorem_stability2} then follows from using Lemma \ref{lem:basicapprox1}.
Having proved Theorem \ref{main_theorem_stability2}, Theorem \ref{main_theorem_stability}
then follows from Lemma \ref{lem:H1}.

\subsection
{The analogue of Theorem \ref{main_theorem_stability2} for a class of smooth coefficients}
\label{sec:smooth}

\subsubsection{Statement of the result}

\begin{condition}[A particular class of smooth coefficients]\label{cond:smooth}

\bit
\item
$\MA \in C^\infty(\Rea^d,\SPD)$, $n\in C^\infty(\Rea^d, \SPD)$, 
\item
there exist $A_{\min}, A_{\max}, n_{\min},$ and $n_{\max}$ such that $\MA$ and $n$ 
satisfy \eqref{eq:Alimits} and \eqref{eq:nlimits}, respectively,
\item
there exists $0<R_0<R$ such that $\supp (\MI-\MA)\subset B_{R_0}$ and $\supp (1-n)\subset B_{R_0}$,
\item 
\begin{equation}
\label{eq:monotone}
x_d \pdiff{\MA}{x_d}(\bx) \preceq 0
\quad\tand\quad 
x_d \pdiff{n}{x_d}(\bx) \geq 0 \quad\tfa \bx \in \Rea^d,
\end{equation}
\item
$(\MA_{d\ell})(\bx)=0$ for $\ell= 1,\ldots, d-1$ and for all $\bx\in \Rea^d$.
\eit
\end{condition}

Observe that the monotonicity conditions in \eqref{eq:monotone} are the continuous versions
of \eqref{eq:monotoneA} and \eqref{eq:monotonen} from Condition \ref{cond:1}. In addition,
the conditions \eqref{eq:monotone} along with the assumptions
$\supp (\MI-\MA)\subset B_{R_0}$ and $\supp (1-n)\subset B_{R_0}$ imply that $A_{\min}\geq 1$
and $n_{\max}\leq 1$.

\ble[Bound for coefficients satisfying Condition \ref{cond:smooth}]
\label{lem:smooth}
There exist $\Cwave,\Cone>0$ such that the following holds.
Given $R>R_0>0$, $\MA$ and $n$ satisfying Condition \ref{cond:smooth}, and $f\in L^2(B_R)$,
if $kR_0\geq \Cwave$, then the variational problem \eqref{eq:vf} with
$F(v)= \int_{B_R}f\,\overline{v}$ and coefficients $\MA$ and $n$ has a unique solution
that satisfies the bound  \eqref{eq:resol}.
\ele

\subsubsection{Overview of the ideas behind the proof of Lemma \ref{lem:smooth}}
\label{sec:ideasmooth}

The basic idea is to use the integrated Morawetz identity \eqref{eq:morid1int}
in $B_R$ with $v=u$,
\bit
\item
$\bZ$ given by the vector field in Definition \ref{def:Z} below;
the key point is that $\bZ$ transitions from  being equal to 
$x_d \be_d$ in $B_{R_0}$ (and hence in a neighbourhood of
$\supp(\MI-\MA)$ and $\supp(1-n)$ to being equal to $\bx$ in
$\Rea^d\setminus B_{R_1}$ for some $0<R_0<R_1<R$, with this
transition controlled by a radial function $\chi$ transitioning
from $0$ (in $B_{R_0}$) to $1$ (in $\Rea^d\setminus B_{R_1}$);
\item
$\beta=R$; and 
\item
$2\alpha = \nabla \cdot \bZ -q\,\chi$
for some $q\in[0,1]$ (to be fixed later in the proof).
\eit

These choices allow us to use the inequality of Lemma \ref{lem:2.1} in \eqref{eq:morid1int}
and obtain that $\int_{B_R}P(u) \leq 0$. The expression for $P(u)$ \eqref{eq:P} can then be
simplified, and bounded below using the monotonicity assumptions \eqref{eq:monotone} on
$\MA$ and $n$, with the result that 
\begin{align} \nonumber
&
\int_{B_R} \Big(
2|\partial_d u|^2\big(1-\chi\big)
+
|\nabla u|^2(2-q)\chi
+
qk^2|u|^2\chi
+
2r|\partial_r u|^2\chi^\prime
\Big)\, \rd x
-
2\Re\int_{B_R} x_d\partial_d \bar u\partial_r u\chi^\prime
\\
&
\hspace{0.5cm}
\leq
-2kR\Im \int_{B_R} f\bar u
+
\Re\int_{B_R} f\Big(
2 x_d\partial_d \bar u \big(1-\chi\big) + 2r\partial_r \bar u\chi +2\alpha \bar u
\Big)
+
\int_{B_R}  \Delta \alpha \nus.
\label{eq:rellich4}
\end{align}
Observe that when $\chi=0$ (i.e.~in $B_{R_0}$) we only have control of $|\partial_d u|^2$ in the
first integral on the left-hand side, but in $\supp(\chi)$ we have control of $|\gu|^2+k^2|u|^2$.

The rest of the argument consists of 
\ben
\item
getting rid of the sign-indefinite ``cross" term in the second integral on the
left-hand side of \eqref{eq:rellich4};
\item
using the following Poincar\'e-Friedrichs-type inequality (see, e.g., \cite[Lemma 2.7]{ChSpGiSm:20})
to put $|u|^2$ back in $B_{R_0}$ (i.e.~where $\chi=0$),
\beqs
\int_{B_{2R}}|v|^2
\leq
8\int_{B_{\sqrt{13} R}\setminus B_{2R}}
|v|^2  + 4 R^2 \int_{B_{\sqrt{13} R}} |\partial_d v|^2
\quad\tfa R>0 \tand v\in H^1(\R^d);
\eeqs
\item
using the following consequence of Green's identity and the inequality \eqref{eq:TR}
to put $|\gu|^2$ back in $B_{R_0}$,
\beq
\label{eq:Green_ineq}
\min\big\{ A_{\min},1\big\} \int_{B_R}\ngus
\leq
\max\big\{ n_{\max},1\big\} \int_{B_R} \nus + \Re\int_{B_R} f \ub;
\eeq
\item
bounding the term on the right-hand side of \eqref{eq:rellich4}
involving the Laplacian of $\alpha$;
\item
using the inequality $2 a b \leq \delta a^2 + \delta^{-1}b^2$ (for $a, b,\delta>0$)
on the other terms on the right-hand side of \eqref{eq:rellich4}.
\een

Regarding 1: this involves imposing a constraint on the growth of $\chi$
(see Point (ii) in Definition \ref{def:Z} and the discussion below this definition).
Regarding 2: here is the place where we lose powers of $k$ compared to the nontrapping estimate,
since, in the Poincar\'e-Friedrichs-type inequality, $|\partial_d u|^2$ is bounded below
by $|u|^2$ without a corresponding factor of $k^2$.

The paper \cite{ChSpGiSm:20} considers the constant-coefficient Helmholtz equation posed
outside a class of obstacles, the prototypical example of which is two aligned cubes
(so that there exist weakly-trapped rays between the two components of the obstacle).
The arguments in \cite{ChSpGiSm:20} use the same vector field $\bZ$ we use here, and,
in fact, \cite{ChSpGiSm:20} obtains exactly the inequality \eqref{eq:rellich4} in the
course of its arguments. The details of the Steps 1-5 above are therefore exactly as
in \cite{ChSpGiSm:20}, and so we do not repeat the details below, instead making precise
reference to the relevant results in \cite{ChSpGiSm:20}.

\subsubsection{The proof of Lemma \ref{lem:smooth}}

\begin{definition}[The vector field $\bZ$]
\label{def:Z}
Given $0<R_0<R_1<\infty$ such that $\supp (\MI-\MA)\subset B_{R_0}$ and
$\supp (1-n)\subset B_{R_0}$ and $\chi\in C^3[0,\infty)$ with
\begin{enumerate}
\item[(i)]
$\chi(r)=0$ for $0\leq r\leq R_0$, $\chi(r) = 1$,
for $r\geq R_1$, $0<\chi(r)<1$, for $R_0<r<R_1$; and
\item[(ii)]
$0\leq r\chi^\prime(r) < 4$, for $r>0$;
\end{enumerate}
let 
\begin{equation}
\label{eq:Z}
\bZ(\bx) := \be_d x_d\big(1-\chi(r)\big) + \bx \,\chi(r), \quad \bx\in \R^d.
\end{equation}
\end{definition}

The requirement (ii) on $\chi$ is needed to 
control the term $2\Re\int_{B_R} x_d\partial_d \bar u\partial_r u\chi^\prime(r)\, \rd x$
on the left-hand side of \eqref{eq:rellich4}, ensuring that this left-hand side is bounded
below by a multiple of $\int_{B_R}(|\partial_d u|^2 + \chi |\gu|^2$)\rd x.
This requirement imposes a constraint on $R_1/R_0$; indeed, \cite[Remark 1.5]{ChSpGiSm:20}
shows that if $\chi$ satisfies Points (i) and (ii) in Definition \ref{def:Z}, then
$R_1/R_0>\re^{1/4}\approx 1.284$, and, conversely, if $R_1/R_0>\re^{1/4}$, then there exists
a $\chi\in C^3[0,\infty)$ satisfying Points (i) and (ii) in Definition \ref{def:Z}.

\begin{lemma}
\label{lem:E0}
Let $u$ be the solution of the transmission problem of Definition \ref{def:transmission},
with $F(v)= \int_{B_R} f\,\overline{v}$ and coefficients $\MA$ and $n$ satisfying Condition
\ref{cond:smooth}. Let $\alpha$ be defined by 
\beq\label{eq:alpha}
2\alpha := \nabla \cdot \bZ -q\,\chi(r),
\eeq
for some $q\in[0,1]$, with $\chi$ as in Definition \ref{def:Z}.
Then $u$ exists and the inequality \eqref{eq:rellich4} holds.
\end{lemma}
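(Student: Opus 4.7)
\textbf{Plan for proving Lemma \ref{lem:E0}.} For existence, since $\MA$ and $n$ are $C^\infty$ the classical strong unique continuation principle holds: any solution $u$ of the homogeneous problem ($f=0$) satisfies $\Delta u + k^2 u=0$ on $\Rea^d \setminus \overline{B_{R_0}}$ together with the Sommerfeld radiation condition, so Rellich's lemma forces $u\equiv 0$ in a neighbourhood of $\Gamma_R$, and UCP propagates this to $u\equiv 0$ on $B_R$. Combined with continuity of $b(\cdot,\cdot)$ from Lemma \ref{lem:cont} and the G\aa rding inequality \eqref{eq:Garding}, Fredholm theory (Lemma \ref{lem:Fred}) gives existence and uniqueness; elliptic regularity delivers $u\in H^2(B_R)$. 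I then apply the integrated Morawetz identity \eqref{eq:morid1int} with $v=u$, $\bZ$ as in Definition \ref{def:Z}, $\beta=R$, and $2\alpha=\nabla\cdot\bZ-q\chi$. In the neighbourhood $\{r>R_1\}$ of $\Gamma_R$ one has $\MA=\MI$, $n=1$, $\bZ=\bx$, and $2\alpha=d-q$ is constant, so the hypotheses of Lemma \ref{lem:morid1int} are met. Since $u$ extends to a Sommerfeld-radiating solution of the free-space Helmholtz equation in $\Rea^d\setminus\overline{B_{R_0}}$ and $2\alpha=d-q\ge d-1$ for $q\in[0,1]$, Lemma \ref{lem:2.1} bounds the boundary contribution in \eqref{eq:morid1int} by zero, yielding $\int_{B_R} P(u)\le 0$.

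The core of the proof is a regionwise analysis of $P(u)$ from \eqref{eq:P}, using $\opL_{\MA,n}u=-f$, $\nabla\beta=0$, and $2\alpha-\nabla\cdot\bZ=-q\chi$. A first observation is that $2\Re\{\bar u\langle(\MA-\MI)\gu,\nabla\alpha\rangle\}$ vanishes identically: on $\supp(\MI-\MA)\subset B_{R_0}$ we have $\bZ=x_d\be_d$, so $\nabla\cdot\bZ=1$ and $\alpha=1/2$ is constant, while outside $B_{R_0}$ the factor $\MA-\MI$ is zero. On $B_{R_0}$ (where $\chi=0$), the monotonicity conditions \eqref{eq:monotone} make $-\langle(x_d\partial_d\MA)\gu,\gu\rangle\ge0$ and $(x_d\partial_d n)k^2|u|^2\ge0$; the structural condition $(\MA)_{d\ell}=0$ for $\ell<d$ (Remark \ref{rem:Adell}) combined with $\partial\bZ=\be_d\otimes\be_d$ reduces $2\Re\langle\MA\gu,\partial\bZ\gu\rangle$ to $2(\MA)_{dd}|\partial_d u|^2\ge 2|\partial_d u|^2$ since $\MA\succeq\MI$, producing the term $2|\partial_d u|^2(1-\chi)$ on the LHS of \eqref{eq:rellich4}. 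On the annulus and on $B_R\setminus B_{R_1}$, $\MA=\MI$ and $n=1$ so the $(\bZ\cdot\nabla)\MA$ and $\bZ\cdot\nabla n$ contributions vanish; a direct calculation of $\partial\bZ$ using $\bZ=\be_d x_d(1-\chi)+\bx\chi$ yields
\begin{equation*}
2\Re\langle\gu,\partial\bZ\gu\rangle = 2(1-\chi)|\partial_d u|^2 + 2\chi|\gu|^2 + 2r\chi'|\partial_r u|^2 - 2\chi'\Re\big(x_d\partial_d u\,\overline{\partial_r u}\big),
\end{equation*}
and combining the $2\chi|\gu|^2$ with the $-q\chi|\gu|^2$ from $(2\alpha-\nabla\cdot\bZ)\langle\gu,\gu\rangle$ gives $(2-q)\chi|\gu|^2$, while the $\bZ\cdot\nabla n$ and $(\nabla\cdot\bZ-2\alpha)n$ terms combine to $qk^2|u|^2\chi$.

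Collecting the sign-definite contributions on the LHS, using $\mathcal L_{\MA,n}u=-f$ in $2\Re\{\overline{\cZ u}\mathcal L_{\MA,n}u\}$ to expand the $f$-terms on the RHS (yielding $-2kR\Im\int f\bar u$ from the $\beta$-part of $\cZ$ and $2\Re\int f(x_d\partial_d\bar u(1-\chi)+r\partial_r\bar u\chi+\alpha\bar u)$ from the $\bZ$- and $\alpha$-parts), and moving the $-\Delta\alpha|u|^2$ term of $P(u)$ to the RHS as $\int\Delta\alpha|u|^2$, exactly produces \eqref{eq:rellich4}. The main obstacle is the careful bookkeeping in the annulus, where neither $\chi$ nor $\chi'$ vanishes: one must isolate the sign-indefinite cross term $-2\Re\int x_d\partial_d\bar u\,\partial_r u\,\chi'$ (to be controlled in the next stage of Lemma \ref{lem:smooth} via the constraint $r\chi'<4$ in Definition \ref{def:Z}) while retaining the positive term $2r\chi'|\partial_r u|^2$; the structural hypotheses of Condition \ref{cond:smooth}, namely monotonicity in $x_d$ and the vanishing off-diagonal entries $(\MA)_{d\ell}$, are precisely what make all the relevant terms sign-definite despite the coefficients being general (non-radial) in the $\bx'$ directions.
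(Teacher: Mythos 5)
Your proof is correct and follows essentially the same path as the paper's: existence from UCP plus Fredholm theory and $H^2$ regularity; the integrated Morawetz identity \eqref{eq:morid1int} with $v=u$, $\bZ$ from Definition \ref{def:Z}, $\beta=R$, $2\alpha=\nabla\cdot\bZ-q\chi$; Lemma \ref{lem:2.1} to deduce $\int_{B_R}P(u)\le 0$; and the same region-by-region simplification of $P(u)$ using the monotonicity conditions, the structural condition $(\MA)_{d\ell}=0$, and the explicit form of $\partial\bZ$. The cancellations you point out --- in particular that $\nabla\alpha=0$ on $\supp(\MI-\MA)$, so the term involving $(\MA-\MI)\nabla\alpha$ vanishes --- coincide with observations (i)--(iv) used in the paper to establish \eqref{eq:P1}.
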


\bpf[Proof of Lemma \ref{lem:E0}]
Since $\MA$ and $n$ satisfying Condition \ref{cond:smooth} are $C^\infty$, 
the existence of $u$ follows from the unique continuation principle (see
the references in \S\ref{sec:UCP}) and Fredholm theory (see the proof of Lemma \ref{lem:Fred}).
Since $\MA \in C^\infty$, $u \in H^2(B_R)$ for any $R>0$ by $H^2$ regularity for the operator
$\nabla\cdot(\MA\nabla )$; see, e.g., \cite[Theorem 4.16]{Mc:00}. By Lemmas \ref{lem:morid1int}
and \ref{lem:2.1}, 
\beq\label{eq:E01}
\int_{B_R} P(u) \leq 0,
\eeq
where $P(u)$ is defined by \eqref{eq:P}.
We first claim that, in $B_R$,
\beq\label{eq:P1}
P(u) \geq - 2 \Re (\overline{\cZ u}\, f ) - q \chi \ngus + q \chi k^2 \nus + 2 \Re \big\langle \MA\gu, \partial \bZ \,\gu\big\rangle - \Delta \alpha \nus.
\eeq
Indeed, this follows from using (i) the conditions on $\partial\MA/\partial x_d$ and $\partial n/\partial x_d$ \eqref{eq:monotone}, noting that when $\MA\neq \MI$ and $n\neq 1$, $\bZ= x_d \be_d$, (ii) the definitions of $\alpha$ \eqref{eq:alpha} and $\beta=R$, (iii) the fact that when $\chi \neq 0$, $\MA=\MI$ and $n=1$, (iv) the fact that $\alpha$ is constant when $\MA\neq \MI$.

To deal with the term $ \Re \big\langle \MA\gu, \partial \bZ \,\gu\big\rangle$,  we first observe that (with the summation convention for the indices $i$ and $j$ but not $d$)
\beq\label{eq:DZ1}
\langle \gv, \partial \bZ\, \gv\big\rangle=\partial_iZ_j\partial_i v\overline{\partial_j v}= |\partial_d v|^2\big(1-\chi(r)\big) + |\nabla v|^2\chi(r) + \left(r|\partial_r v|^2-x_d\conj{\partial_d v}\partial_r v\right)\chi^\prime(r).
\eeq
Next, when $\MA\neq \MI$, $\chi=0$ so $\bZ= x_d \be_d$; therefore $((\partial \bZ) \nabla u)_i = \delta_{id} \partial_d u$, so that \eqref{eq:keyZA} above holds. The assumption that $(\MA_{d\ell})=0$ for $\ell=1,\ldots, d-1$ implies that, when $\MA\neq \MI$, $\langle \MA \gu, \partial \bZ \gu\rangle = (\MA)_{dd} |\partial_d u|^2$. The fact that $A_{\min}\geq 1$ (noted after Condition \ref{cond:smooth}) implies that $(\MA)_{dd}\geq 1$, and so 
\beq\label{eq:DZ2}
\text{when } \MA\neq \MI, \quad \langle \MA \gu, \partial \bZ \,\gu\rangle \geq |\partial_d u|^2.
\eeq
Since $\chi=0$ when $\MA\neq \MI$, we can combine \eqref{eq:DZ1} and \eqref{eq:DZ2} to obtain that
\beq\label{eq:DZ3}
\langle \MA\gv, \partial \bZ\, \gv\big\rangle\geq  |\partial_d v|^2\big(1-\chi(r)\big) + |\nabla v|^2\chi(r) + \left(r|\partial_r v|^2-x_d\conj{\partial_d v}\partial_r v\right)\chi^\prime(r).
\eeq
Combining \eqref{eq:P1} and \eqref{eq:DZ3} we have 
\begin{align*}
P(u) \geq&-2 \Re \big(\overline{\cZ u }\, \fout \big) +2|\partial_d u|^2\big(1-\chi(r)\big) + |\nabla u|^2(2-q)\chi(r) +qk^2|u|^2\chi(r)\\
 & \qquad + 2r|\partial_r u|^2\chi^\prime(r)  
 -2 \Re\big(x_d\conj{\partial_d u}\partial_r u\,\chi^\prime(r)\big)
 -  \Delta \alpha |u|^2.
\end{align*}
 and \eqref{eq:rellich4} then follows from \eqref{eq:E01} by expanding the $\overline{\cZ u }\, f$ term using  that
\beqs
\bZ\cdot \nabla u =  x_d\partial_d u \big(1-\chi(r)\big) + r\partial_r u\chi(r).
\eeqs
\epf

\bpf[Proof of Lemma \ref{lem:smooth}]
The inequality \eqref{eq:rellich4} in Lemma \ref{lem:E0} is identical to the inequality
that is the result of \cite[Lemma 3.1]{ChSpGiSm:20}. The results
\cite[Lemmas 3.2, 3.3, 3.4, 3.5]{ChSpGiSm:20} (discussed in \S\ref{sec:ideasmooth}) then go
through ad verbatim (with the value of $q$ fixed in \cite[Lemma 3.5]{ChSpGiSm:20});
in seeing this, observe that, since $A_{\min}\geq 1$ and $n_{\max}\leq 1$, the factors in
front of the integrals of $\ngus$ and $\nus$ in \eqref{eq:Green_ineq} are both one, as they
are in the analogous inequality \cite[Equation 2.11]{ChSpGiSm:20}. Lemma \ref{lem:smooth}
is then a consequence of \cite[Lemma 3.5]{ChSpGiSm:20} (see \cite[Proof of Theorem 1.10 from
Lemma 3.5]{ChSpGiSm:20}).
\epf

\bre[Explicit expressions for $\Cwave$, and $k_0$]
\label{rem:explicit}
$\Cone$ is given in terms of $\chi$ in \cite[Equation 3.22]{ChSpGiSm:20}
(see the discussion in \cite[Proof of Theorem 1.10 from Lemma 3.5]{ChSpGiSm:20}).

$\Cwave$ is given in terms of the function $\chi$ in Definition \ref{def:Z} in
\cite[Equation 3.20]{ChSpGiSm:20}.
Indeed, to see that \cite[Equation 3.20]{ChSpGiSm:20} is a condition of the form $kR_0\geq \Cwave$
with $\Cwave$ dependent only on $\chi$, we need to show that $ (R_0)^2 m_\alpha(R)$ is a
dimensionless quantity depending only on $\chi$, where
$m_\alpha(R):= \sup_{\bx\in B_R}\Delta\alpha(\bx)$ and $2\alpha = \nabla \cdot \bZ -q\,\chi$
(as in \S\ref{sec:ideasmooth}), with $\bZ$ defined by \eqref{eq:Z}. By definition, $\alpha$ is
constant for $R\geq R_1$ (see \cite[Equation 3.15]{ChSpGiSm:20}), and thus
$m_\alpha(R)= m_\alpha(R_1)$. The only constraint on $R_1$ is that $R_1/R_0$ is sufficiently
large (see the discussion below Definition \ref{def:Z}); therefore, we can choose $R_1$ to be
proportional to $R_0$, and thus $(R_0)^2 m_\alpha(R)$ is a dimensionless quantity depending
only on $\chi$ as claimed.
\ere

\subsection
{
Approximation of $\MA$ and $n$ satisfying Condition \ref{cond:1} by 
$\MA_\delta$ and $n_\delta$ satisfying Condition \ref{cond:smooth}
}
\label{sec:approx}

\ble[Approximation of $\MA$ and $n$ satisfying Condition \ref{cond:1}]
\label{lem:approx}
Given $\MA$ and $n$ satisfying Condition \ref{cond:1}, there exist $(\MA^\ell)_{\ell=0}^\infty,
(n^\ell)_{\ell=0}^\infty$ such that, for $\ell$ sufficiently large, (i) $\MA^\ell$ and $n^\ell$
satisfy Condition \ref{cond:smooth}, and (ii) 
\beqs
\N{\MA^\ell-\MA}_{L^2(B_R, \Com^{d\times d})}
\leq
\frac{1}{\ell} \quad\tand\quad \N{n^\ell -n}_{L^2(B_R)} \leq \frac{1}{\ell}.
\eeqs
\ele

\bpf
The idea of the proof is to mollify $\MA$ and $n$ to approximate them by families of $C^\infty$ functions  $\MA_\delta$ and $n_\delta$  that  satisfy Condition \ref{cond:smooth}.
However, before mollification, we first must approximate $\MA$ and $n$ by functions that are constant in the $x_d$ direction in a neighbourhood of $x_d=0$. To see why this is necessary, suppose that, in some compact set, $n$ is only a function of $x_d$, and equals $C$ for $0\leq x_d< c$ and $C-x_d$ for $-c<x_d<0$ (observe that this function satisfies the monotonicity assumption \eqref{eq:monotonen} in $-c<x_d<c$). 
Then, with $n_\delta$ the standard mollification of $n$, given $\delta>0$, there exists an $x_d^*>0$ such that $(\partial n_\delta/\partial x_d)|_{x_d=x_d^*}\leq 0$, violating the requirement on $n_\delta$ in \eqref{eq:monotone}.

Let $0<R_0<R$ be such that $\supp(\MI-\MA)$ and $\supp(1-n)$ are both $\subset B_{R_0}$. Given $\delta_1>0$, let
\beq\label{eq:tilde_A_delta_1}
\widetilde{\MA}_{\delta_1}(\bx):= 
\begin{cases}
A_{\max} \MI, & \tfor \bx \in \big\{ (\bx',x_d): \, |\bx'|\leq R_0, \, |x_d|\leq \delta_1\big\},\\
A(\bx), & \text{ otherwise},
\end{cases}
\eeq
and let 
\beqs
\widetilde{n}_{\delta_1}(\bx):= 
\begin{cases}
n_{\min} \MI, & \tfor \bx \in \big\{ (\bx',x_d): \, |\bx'|\leq R_0, \, |x_d|\leq \delta_1\big\},\\
n(\bx), & \text{ otherwise},
\end{cases}
\eeqs
Observe that (i), $\widetilde{\MA}_{\delta_1}$ and $\widetilde{n}_{\delta_1}$ satisfy Condition \ref{cond:1}, (ii)
$\supp(\MI-\widetilde{\MA}_{\delta_1})$ and $\supp(1-\widetilde{n}_{\delta_1})$ are both $\subset B_{R_1(\delta_1)}$, where $R_1(\delta_1):= \sqrt{R_0^2+\delta^2_1}$, and 
(iii) 
there exists $C_d>0$ (depending only on the dimension $d$) such that 
\beq\label{eq:approx1}
\big\|\MA-\widetilde{\MA}_{\delta_1}\big\|_{L^2(B_R,\Com^{d\times d})}\leq C_d (R_0)^{d-1}\delta_1 A_{\max}
\quad\tand\quad
\N{n-\widetilde{n}_{\delta_1}}_{L^2(B_R)}\leq C_d (R_0)^{d-1}\delta_1 n_{\max}.
\eeq
Let $\psi\in C_{0}^\infty(\Rea^{d})$ be defined by 
\beqs
\psi(\bx) := 
\left\{
\begin{array}{ll}
C \exp\big((|\bx|^2 -1)^{-1}\big) & \text{ if } |\bx| <1,\\
0 & \text{ if } |\bx|\geq 1,
\end{array}
\right.
\eeqs
where $C$ is chosen so that $\int_{\Rea^{d}} \psi(\bx) \rd \bx =1$. Define $\psi_\delta(\bx) := \delta^{-d}\psi(\bx/\delta)$, so that $\psi_\delta(\bx)=0$ if $|\bx|>\delta$ and $\int_{\Rea^d} \psi_\delta(\bx) \rd \bx =1$.
Let
\beqs
(\widetilde{\MA}_{\delta_1})_\delta (\bx) := (\widetilde{\MA}_{\delta_1} * \psi_\delta)(\bx)= \int_{|\by|<\delta}\widetilde{\MA}_{\delta_1}(\bx-\by)\psi_\delta(\by)\, \rd \by,
\eeqs
where the convolution is understood element-wise, and similarly
\beqs
(\widetilde{n}_{\delta_1})_\delta (\bx) := (\widetilde{n}_{\delta_1} * \psi_\delta)(\bx)=\int_{|\by|<\delta}\widetilde{n}_{\delta_1}(\bx-\by)\psi_\delta(\by)\, \rd \by.
\eeqs
Standard properties of mollifiers (see, e.g., \cite[\S C.4 Theorem 6]{Ev:98}) imply that 
\bit
\item
$(\widetilde{\MA}_{\delta_1})_\delta, (\widetilde{n}_{\delta_1})_\delta \in C^\infty(\Rea^{d})$,
\item
given $R>0$, both
$\|(\widetilde{\MA}_{\delta_1})-(\widetilde{\MA}_{\delta_1})_\delta\|_{L^2(B_R;\Com^{d\times d})}$
and
$\|(\widetilde{n}_{\delta_1})-(\widetilde{n}_{\delta_1})_\delta\|_{L^2(B_R)}\tendo$
as $\delta\tendo$, and
\item
$n_{\min}\leq (\widetilde{n}_{\delta_1})_\delta \leq n_{\max}$ and
$A_{\min}\preceq (\widetilde{\MA}_{\delta_1})_\delta \preceq A_{\max}$
for all $\bx \in \Rea^{d}$.
\eit
Since $(\widetilde{\MA}_{\delta_1})_\delta$ is formed from $(\widetilde{\MA}_{\delta_1})$ by elementwise convolution, the condition $((\widetilde{\MA}_{\delta_1})_\delta)_{d\ell}(\bx)=0$ for $\ell= 1,\ldots, d-1$ and for all $\bx\in \Rea^d$ in Condition \ref{cond:smooth} follows from the corresponding condition on $\widetilde{\MA}_{\delta_1}$, which follows from the corresponding condition on $\MA$ in 
Condition \ref{cond:1}. 
Furthermore, since $\supp (\MI-\widetilde{\MA}_{\delta_1})$ and $\supp (1-\widetilde{n}_{\delta_1})\subset B_{R_1(\delta_1)}$, the definitions of $\MA_\delta$ and $n_\delta$ imply that that $\supp(\MI-\MA_\delta)$ and $\supp (1-n)\subset B_{R_1(\delta_1)+\delta}$ which is $\subset B_{R}$ for $\delta< R - R_1(\delta_1)= R- \sqrt{R_0^2 + \delta_1^2}$.

We now show that, when $\delta$ is sufficiently small, $(\widetilde{\MA}_{\delta_1})_\delta$ and $(\widetilde{n}_{\delta_1})_\delta$ satisfy the monotonicity conditions \eqref{eq:monotone} (with $\MA$ replaced by $(\widetilde{\MA}_{\delta_1})_\delta$ and $n$ replaced by
$(\widetilde{n}_{\delta_1})_\delta$).
Observe that
\beq\label{eq:diffA}
(\widetilde{\MA}_{\delta_1})_\delta(\bx+h\be_d) -(\widetilde{\MA}_{\delta_1})_\delta(\bx) = \int_{|\by|<\delta} \Big[
\widetilde{\MA}_{\delta_1}(\bx+ h \be_d - \by) - \widetilde{\MA}_{\delta_1}(\bx-\by)\Big] \psi_\delta(\by) \rd \by.
\eeq
We claim that, when $\delta\leq \delta_1$, (i) when $x_d>0$, the integrand on the right-hand side of \eqref{eq:diffA} is $\preceq 0$ for all $h\geq 0$, and (ii) when $x_d<0$, the integrand on the right-hand side of \eqref{eq:diffA} is $\succeq 0$ for all $h\geq 0$.
To see (i), first recall that, from its definition \eqref{eq:tilde_A_delta_1}, $\widetilde{\MA}_{\delta_1}(\bz)$ is constant in the $z_d$ direction when $|z_d|\leq \delta_1$.
If $x_d\geq \delta_1$ then $x_d-y_d>0$, and the integrand on the right-hand side of \eqref{eq:diffA} is $\preceq 0$ by \eqref{eq:monotoneA}. If $0<x_d<\delta_1$, $x_d-y_d$ is no longer $>0$ for all $|\by|<\delta$, but $x_d-y_d> -\delta_1$, and the fact that $\widetilde{\MA}_{\delta_1}(\bz)$ is constant in the $z_d$ direction when $|z_d|\leq \delta_1$ implies that  the integrand on the right-hand side of \eqref{eq:diffA} is either $=0$ (when $x_d - y_d + h\leq \delta_1$) or $\preceq 0$ by \eqref{eq:monotoneA} (when $x_d-y_d+h>\delta_1$).
The proof of (ii) is similar.
We have therefore shown that, for $\delta$ sufficiently small,
\beqs
x_d \Big[(\widetilde{\MA}_{\delta_1})_\delta(\bx+h\be_d) -(\widetilde{\MA}_{\delta_1})_\delta(\bx) \Big] \preceq 0 \quad\tfa h\geq 0;
\eeqs
since $(\widetilde{\MA}_{\delta_1})_\delta \in C^\infty(\Rea^d)$ this implies that $x_d (\partial (\widetilde{\MA}_{\delta_1})_\delta/\partial x_d)(\bx) \preceq 0$. 
In an essentially-identical way (with inequality in the sense of quadratic forms replaced by standard inequality), we find that $x_d (\partial 
(\widetilde{n}_{\delta_1})_\delta
/\partial x_d)(\bx) \geq 0$.

We have therefore shown that $(\widetilde{\MA}_{\delta_1})_\delta$ and $(\widetilde{n}_{\delta_1})_\delta$ satisfy Condition \ref{cond:smooth} for $\delta \leq\min\{ \delta_1, R- \sqrt{R_0^2 + \delta_1^2}\}$, where the first term in the minimum ensures that $(\widetilde{\MA}_{\delta_1})_\delta$ and $(\widetilde{n}_{\delta_1})_\delta$ satisfy the monotonicity condition \eqref{eq:monotone}, and the second term in the minimum ensures that $\supp(\MI- (\widetilde{\MA}_{\delta_1})_\delta)$ and $\supp(1- (\widetilde{n}_{\delta_1})_\delta)$ are both $\subset B_R$.

We now define $\MA^\ell$ to be $(\widetilde{\MA}_{\delta_1})_\delta$ for specific $\delta_1$ and $\delta$, and similarly for $n^\ell$. 
Indeed, from the properties of mollifiers above we have that, given $\delta_1>0$, $\eps>0$, there exists $\delta^*= \delta^*(\MA,d,\delta_1,\eps)>0$ such that
\beq\label{eq:approx2}
\big\|(\widetilde{\MA}_{\delta_1})_\delta -\widetilde{\MA}_{\delta_1}\big\|_{L^2(B_R,\Com^{d\times d})} \leq \frac{\eps}{2} \quad\tfa \delta\leq\delta^*(\MA,d,\delta_1,\eps).
\eeq
Set
$\delta_1:= (2 C_d\, (R_0)^{d-1} A_{\max}\ell)^{-1}$;
 then, if $\delta \leq \delta^*(\MA,d,\delta_1, \ell^{-1})$, the inequalities \eqref{eq:approx1} and \eqref{eq:approx2} imply that
\begin{align*}
\big\|
\MA - (\widetilde{\MA}_{\delta_1})_\delta
\big\|_{L^2(B_R;\Com^{d\times d})}
&\leq \big\|
\MA - \widetilde{\MA}_{\delta_1}
\big\|_{L^2(B_R;\Com^{d\times d})}
+
\big\|
\widetilde{\MA}_{\delta_1} - (\widetilde{\MA}_{\delta_1})_\delta
\big\|_{L^2(B_R;\Com^{d\times d})},\\
&\leq C_d \,(R_0)^{d-1} \delta_1A_{\max} + \frac{1}{2\ell}\, \leq \,\frac{1}{2\ell}+\frac{1}{2\ell}= \frac{1}{\ell}.
\end{align*}
Recall from above that $(\widetilde{\MA}_{\delta_1})_\delta$ and $(\widetilde{n}_{\delta_1})_\delta$
satisfy Condition \ref{cond:smooth} when $\delta \leq\min\{ \delta_1, R- \sqrt{R_0^2 + \delta_1^2}\}$.
We therefore set
\beqs
\delta := \min\left\{ \delta_1, \,\,R- \sqrt{R_0^2 + \delta_1^2}, \,\,\delta^*(\MA,d,\delta_1, \ell^{-1})\right\}\quad\tand\quad \MA^\ell:=(\widetilde{\MA}_{\delta_1})_\delta;
\eeqs
then $\|\MA-\MA^\ell\|_{L^2(B_R;\Rea^{d\times d})}\leq \ell^{-1}$ and $\MA^\ell$ satisfies Condition \ref{cond:smooth}. The definition of $n^\ell$ follows in an essentially-identical way.
\epf

\section{Proofs of Lemma \ref{main_lemma_homogenization}, Theorem \ref{main_theorem_homogenization}, and Corollary \ref{corollary_stability}}
\label{section_homogenization}

If $v: \Din \times Y \to \CCC$, let  $v^\eps: \Din \to \CCC$ denote the function
\begin{equation*}
v^\eps(\xx) \eq v\left (\xx, \left \{\frac{\xx}{\eps} \right \}\right ),
\end{equation*}
where $\{\yy\} = \yy \mod 1$. (As a special case of this, if $v: Y \to \CCC$,
then $v^\eps(\xx):= v(\{\xx/\eps\})$.) Given such a $v$,
\begin{equation}
\label{eq_eps_diff}
\pd{v^\eps}{x_j} = \left (\pd{v}{x_j} + \frac{1}{\eps} \pd{v}{y_j} \right )^\eps.
\end{equation}
From here on, when we use the notation $\lesssim$, the omitted constant only depends
on $\hn$, $\hMA$, and $\Din$.

\subsection{The homogenized problem}

\subsubsection{The homogenized coefficients}

The homogenized coefficient $\nH$ is the mean value of
$\hn$ over the periodic cell $Y$, i.e., (recalling that $|Y| = 1$)
\begin{equation}
\label{eq_def_nH}
\nH \eq \int_Y \hn.
\end{equation}
The definition of $\MAH$ requires the auxiliary functions
$\hchi_j$, $j=1,2,3$, and matrices $\hMC$ and $\hMB$.

Let $H^1_\sharp(Y)$ be the subspace of $H^1_\per(Y)$
consisting of functions with zero mean. The function $\hchi_j \in H^1_\sharp(Y)$
is defined as the solution of
\begin{equation}\label{eq_hchi_def}
-\divy \left (\hMA \grady \hchi_j\right ) = -\pd{(\hMA)_{j \ell}}{y_\ell}
\end{equation}
where the index $\ell$ on the right-hand side is summed over,
and the derivative on the right-hand side is understood in a distributional sense.
Lemma \ref{lemma_condition_patterns} shows that $\hchi_j$, $j=1,2,3,$ exists, is unique,
and is in $H^{1+s}(Y) \cap C^1(\PY)$ (i.e., $C^1$ on each element of the partition $\PY$) for some $s > 0$,
with
\begin{equation}
\label{eq_hchi}
\|\hchi_j\|_{H^{1+s}(Y)} \lesssim 1
\quad \tand \quad
\|\hchi_j\|_{W^{1,\infty}(\PY)} \lesssim 1,
\end{equation}
where both the omitted  constant and $s$ depend only on $\hMA$. The matrix $\hMC$ is defined by
\begin{equation}
\label{eq_def_C}
(\hMC)_{j \ell} \eq \pd{\hchi_\ell}{y_j}, \quad 1 \leq j,\ell \leq d.
\end{equation}
Because of \eqref{eq_hchi}, $(\hMC)_{j \ell} \in H^s(Y) \cap C^0(\PY)$ for some $s>0$ with
\beqs
\|\hMC\|_{H^{s}(Y)} \lesssim 1
\quad \tand \quad
\|\hMC\|_{L^\infty(Y)} \lesssim 1.
\eeqs
With
\begin{equation}
\label{eq_B}
\hMB := \hMA(\MI - \hMC),
\end{equation}
the homogenized matrix coefficient $\MA^H$
is the mean value of $\hMB$ over the periodic cell $Y$, i.e., 
\begin{equation}
\label{eq_def_AH}
(\MAH)_{j\ell} = \int_Y (\hMB)_{j\ell}, \qquad 1 \leq j,\ell \leq d;
\end{equation}
note that $\MAH \in \SPD$ by, e.g., \cite[Remarks 2.6 and 2.7]{BeLiPa:78},
\cite[\S6.3]{cioranescu_donato_1999a}.
We now claim that $\hMB \in H^s(Y) \cap C^0(\PY) \cap L^\infty(Y)$ for some $0<s<1/2$; indeed, this follows from the facts that $\hMC\in H^s(Y)$, 
$\hMA$ is (at least) piecewise $C^{0,1}$ (by Condition \ref{condition_periodic_patterns}), and a piecewise $H^s$ function for $s<1/2$ is in $H^s(Y)$ (by the definition of the Slobodeckij seminorm). 

\subsubsection{Basic properties of the homogenized problem}

We now prove Lemma \ref{main_lemma_homogenization}, i.e., well-posedness
of the homogenized problem. 

\begin{proof}[Proof of Lemma \ref{main_lemma_homogenization}]
We first show that if $\hn$ and $\hMA$ are admissible periodic patterns
(in the sense of Definition \ref{condition_periodic_patterns}), then $\nH$ and $\MAH$
satisfy Condition \ref{cond:1}; once this is established, the bound \eqref{eq_CsolH}
follows from Theorem \ref{main_theorem_stability}.

Since both $\nH$ and $\MAH$ are constant in $\Din$,
they are clearly independent of $x_d$; the monotonicity
conditions \eqref{eq:monotoneA} and \eqref{eq:monotonen}
therefore hold if $\nH \leq 1$ and $\MAH \succeq \MI$.
Recall from the discussion below Condition \ref{cond:1}
that $n_{\max}\leq 1$ and $A_{\min}\geq 1$. Since $\nH$ is
obtained from $\hn$ through simple averaging \eqref{eq_def_nH}, $\nH \leq 1$. 
While the averaging process is more complicated for $\MAH$,
\cite[Theorem 13.7]{cioranescu_donato_1999a} implies that 
$\MAH \succeq \MI$.

Therefore, to show that $\nH$ and $\MAH$ satisfy Condition \ref{cond:1}, it only remains to show that $\MAH_{d,\ell} = 0$ for $\ell=1,\dots,d-1$. By \eqref{eq_def_AH},
\begin{equation*}
\MAH = \int_Y \hMB = \int_Y \hMA - \int_Y \hMA \hMC,
\end{equation*}
it is sufficient to show that $(\hMC)_{\ell d} = 0$ for $\ell=1,\dots,d-1$. By the definition of $\hMC$ \eqref{eq_def_C},
\begin{equation*}
(\hMC)_{d\ell} \eq \frac{\partial \hchi_{\ell}}{\partial y_d} = 0
\end{equation*}
since $\hchi$ is independent of $y_d$, and
\begin{equation*}
(\hMC)_{\ell d} \eq \frac{\partial \hchi_d}{\partial y_\ell};
\end{equation*}
however, $\hchi_d = 0$ since the right-hand side of \eqref{eq_hchi} is 
\begin{equation*}
-\sum_{\ell=1}^d\frac{\partial (\hMA)_{d\ell}}{\partial y_\ell}
=
-\frac{\partial (\hMA)_{dd}}{\partial y_d}
=
0,
\end{equation*}
because $\hMA$ is independent of $y_d$.

If $\Din$ is star-shaped, since $\MAH \succeq \MI$ and $\nH\leq 1$,
$\MAH$ and $\nH$ satisfy \cite[Condition 2.6]{GrPeSp:19} with
$\mu_1 = A_{\min}=1$ and $\mu_2 = n_{\min}$ (see the example in \cite[Condition 2.10]{GrPeSp:19})
and so, by \cite[Theorem 2.7]{GrPeSp:19}, if $F(v) = \int_{B_R} f\,\overline{v}$, then 
\beqs
\N{u_{0}}_{H^1_k(B_R)}
\leq
\frac{2}{\sqrt{n_{\min}}}
\sqrt{R^2 + \frac{1}{n_{\min}} \left( R + \frac{d-1}{2k}\right)^2}
\N{f}_{L^2(B_R)}
\quad \tfa k>0;
\eeqs
the bound \eqref{eq_Cstar} follows from combining this with Lemma \ref{lem:H1}.
\end{proof}

\begin{lemma}[Shift property for the homogenized problem]
\label{lemma_shift}
If $\Gamma$ is $C^{1,1}$, then for all $g \in L^2(B_R)$, 
if $v \in H^1(B_R)$ satisfies
$-\div \left (\MAH \grad v\right ) = g \text{ in } B_R$
then $v \in H^2(\Din)$ with
$|v|_{H^2(\Din)} \lesssim \|g\|_{L^2(B_R)}.$
\end{lemma}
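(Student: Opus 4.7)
The plan is to decompose the argument into interior regularity of $v$ away from $\Gamma$ and regularity up to $\Gamma$ from inside $\Din$. Inside $\Din$, since $\MAH$ is a constant SPD matrix, the equation $-\div(\MAH \grad v) = g$ reduces to the constant-coefficient uniformly elliptic PDE $-(\MAH)_{ij} \partial_i \partial_j v = g$, and classical interior elliptic regularity immediately yields $v \in H^2_{\mathrm{loc}}(\Din)$ with the expected local estimate of $|v|_{H^2}$ by $\|g\|_{L^2(B_R)}$ and $\|v\|_{H^1(B_R)}$.

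For regularity up to $\Gamma$, the key observation is that (consistently with \eqref{eq_n_eps_A_eps} extended to the homogenized setting, so $\MAH$ is the constant matrix inside $\Din$ and $\MI$ outside), $v$ solves a transmission problem across the $C^{1,1}$ interface $\Gamma$: $-(\MAH)_{ij}\partial_i\partial_j v = g$ in $\Din$, $-\Delta v = g$ in $B_R\setminus\overline{\Din}$, with $v$ continuous across $\Gamma$ and $(\MAH \grad v)\cdot\bn|_{\Din} = \grad v\cdot\bn|_{\Dout}$ on $\Gamma$ (these transmission conditions being encoded in the distributional equation on $B_R$). I would cover $\Gamma$ by finitely many open sets on each of which $\Gamma$ is straightened by a $C^{1,1}$ diffeomorphism to the hyperplane $\{y_d=0\}$. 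After pulling back, the problem becomes a transmission problem across $\{y_d=0\}$ with coefficient matrix $\widetilde{\MA}$ that is only piecewise Lipschitz (Lipschitz on each half, rather than constant, since the diffeomorphism is $C^{1,1}$ and therefore has Lipschitz Jacobian), still uniformly elliptic, and discontinuous only across the flat interface.

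The proof would then be completed by the standard tangential difference-quotient method: because tangential translations preserve the flat interface, for each $j=1,\ldots,d-1$ the difference quotient $D_j^h \widetilde v$ satisfies a weak transmission problem of the same structure, and an energy estimate uniform in $h$, obtained by testing against $D_j^{-h}(\psi D_j^h \widetilde v)$ with a suitable cut-off $\psi$, yields $\partial_j \widetilde v \in H^1$ on each side of $\{y_d=0\}$. The missing second normal derivative $\partial_d^2 \widetilde v$ is then recovered on each side algebraically from the PDE, since $\widetilde{\MA}_{dd}$ is uniformly bounded below and the identity $-\widetilde{\MA}_{dd}\partial_d^2 \widetilde v = \widetilde g + (\text{already-controlled terms})$ expresses $\partial_d^2 \widetilde v$ in $L^2$. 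Summing over the cover and combining with the interior estimate gives $v \in H^2(\Din)$ with the required bound.

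The main obstacle I expect is the careful bookkeeping needed to verify (a) that the tangential difference quotient of the weak solution is itself a weak solution of a transmission problem of the same form, with a right-hand side controlled in $L^2$ uniformly in $h$, and (b) that the conormal transmission condition survives the test-function manipulation on the flattened side. This is classical for a $C^{1,1}$ interface with Lipschitz coefficients on each side, so as a shortcut one may instead invoke a packaged $H^2$-regularity theorem for elliptic transmission problems (e.g.\ in the spirit of the regularity results in \cite{Mc:00}) and omit the difference-quotient argument entirely.
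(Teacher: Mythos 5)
Your proposal is correct and ends up at the same place as the paper: the paper's proof is a one-line citation of the packaged $H^2$-regularity shift (\cite[Theorem 4.18]{Mc:00}) for the piecewise-constant transmission problem across the $C^{1,1}$ interface $\Gamma$, which is precisely the shortcut you note at the end. Your unpacked version (interior regularity, boundary flattening, tangential difference quotients, algebraic recovery of $\partial_d^2 v$) is the standard argument underlying that citation, so the two are the same approach at different levels of detail.
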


\bpf
Since $\MAH$ is constant in both $\Din$ and $\Dout$, Lemma \ref{lemma_shift}
holds if $\Gamma$ is $C^{1,1}$ by, e.g., \cite[Theorem 4.18]{Mc:00}.
(If $\Gamma$ has edges or corners, similar shift properties hold but with
the angles of edges/corners constrained by the values of $\MAH$, see, e.g., \cite{costabel_dauge_nicaise_1999a}.)
\epf

We abbreviate $\CsH$ by $\CsHshort$ in the rest of the paper.

\begin{lemma}[Bounds on $u_0$]
\label{lemma_estimates_u0}
Assume that $\hn$ and $\hMA$ satisfy Condition \ref{condition_periodic_patterns},
and $\Gamma$ is $C^{1,1}$. Then, for all $f \in L^2(B_R)$, the solution $u_0$ of
\eqref{eq:vf} with coefficients $\nH$ and $\MAH$ and $F(v)= \int_{B_R} f\, \overline{v}$
satisfies
\begin{equation}
\label{eq_u0_H2}
\N{u_0}_{H^2(B_R)} + k\|u_0\|_{H^1_k(B_R)} \lesssim \CsHshort \|f\|_{L^2(B_R)}.
\end{equation}
\end{lemma}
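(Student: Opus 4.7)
The overall plan is to split the estimate into its two natural pieces and handle each in turn: the $H^1_k$-bound comes out of the definition of $\CsHshort$, while the $H^2$-bound is obtained by reading the homogenized equation in strong form and applying the shift property of Lemma \ref{lemma_shift}, with the resulting lossy $k^2\|u_0\|_{L^2}$ term absorbed via the first bound.

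For the $H^1_k$ step, I would specialize the definition \eqref{eq_Csol_def} of $\CsHshort$ to $F(v) = \int_{B_R} f\,\overline{v}$. Cauchy--Schwarz combined with the elementary bound $k\|v\|_{L^2(B_R)} \leq \|v\|_{H^1_k(B_R)}$ that follows from \eqref{eq:1knorm} yields $\|F\|_{(H^1_k(B_R))'} \leq k^{-1}\|f\|_{L^2(B_R)}$; substituting into the definition of $\CsHshort$ gives $k\|u_0\|_{H^1_k(B_R)} \leq \CsHshort\|f\|_{L^2(B_R)}$, which is half of the required estimate.

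For the $H^2$ step, I would view the variational identity as the strong-form equation
\begin{equation*}
-\nabla\cdot(\MAH\nabla u_0) = f + k^2\nH u_0 \quad \text{in } B_R,
\end{equation*}
whose right-hand side lies in $L^2(B_R)$ with norm $\lesssim \|f\|_{L^2(B_R)} + k^2\|u_0\|_{L^2(B_R)}$. Lemma \ref{lemma_shift} then delivers $|u_0|_{H^2(\Din)} \lesssim \|f\|_{L^2(B_R)} + k^2\|u_0\|_{L^2(B_R)}$, and the troublesome $k^2\|u_0\|_{L^2(B_R)}$ term is absorbed using the previous step via $k^2\|u_0\|_{L^2(B_R)} \leq k\|u_0\|_{H^1_k(B_R)} \leq \CsHshort\|f\|_{L^2(B_R)}$. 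On $\Dout\cap B_R$ we have $\MAH = \MI$ and $\nH = 1$, so the same reasoning (the shift argument of Lemma \ref{lemma_shift} applies symmetrically to the outer side of the $C^{1,1}$ interface $\Gamma$) yields the matching estimate; $\|u_0\|_{H^2(B_R)}$ is naturally interpreted piecewise across $\Gamma$, since the jump in conormal derivative forced by $\MAH \neq \MI$ on the two sides of $\Gamma$ precludes global $H^2$ regularity.

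The main point requiring any care is regularity up to the artificial boundary $\Gamma_R$, where the Helmholtz problem is closed by the nonlocal DtN condition rather than a classical one. I would handle this by a cut-off decomposition: write $u_0 = \chi u_0 + (1-\chi)u_0$ with $\chi \equiv 1$ in a neighborhood of $\Din$ and $\chi \equiv 0$ near $\Gamma_R$; the first piece is then covered directly by Lemma \ref{lemma_shift}, while the second satisfies a constant-coefficient Helmholtz equation on a smooth domain with the DtN condition on the smooth surface $\Gamma_R$, where $H^2$ up-to-boundary regularity is standard from the mapping properties of the DtN operator. This localisation is the only step that goes beyond pure bookkeeping; everything else is a direct application of the definition of $\CsHshort$ and the shift estimate of Lemma \ref{lemma_shift}.
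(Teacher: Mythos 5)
Your proof follows the same route as the paper: the $H^1_k$-bound comes straight from the definition of $\CsHshort$ via $\|F\|_{(H^1_k(B_R))'}\leq k^{-1}\|f\|_{L^2(B_R)}$, and the $H^2$-bound comes from the shift property of Lemma~\ref{lemma_shift} applied with $g=\nH k^2 u_0+f$, with the $k^2\|u_0\|_{L^2}$ term absorbed via the first bound and the residual $\|f\|_{L^2}$ via $\CsHshort\gtrsim 1$. You are more explicit on two points the paper leaves unstated --- that Lemma~\ref{lemma_shift} as written only controls $|u_0|_{H^2(\Din)}$ so that the $H^2(B_R)$-norm in the statement must be read piecewise across $\Gamma$ (which is indeed all the downstream arguments in Lemmas~\ref{lemma_u1_eps}--\ref{lemma_homogenization_with_theta} actually use), and regularity up to the artificial boundary $\Gamma_R$, which you handle by a cutoff --- both of which are sound additions rather than deviations.
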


\bpf
The bound on $\|u\|_{H^1_k(B_R)}$ in \eqref{eq_u0_H2} follows from the definition of $\CsHshort$.
The bound on $\|u\|_{H^2(B_R)}$ in  \eqref{eq_u0_H2} follows from Lemma \ref{lemma_shift} with
$v=u$, $g= \nH k^2 u +f$, and $\alpha=0$, and the fact that $\CsHshort \gtrsim 1$
(as noted below \eqref{eq:1knorm}).
\epf

\subsection{The first-order, second-order, and boundary correctors}

Periodic homogenization relies on the formal asymptotic expansion
\begin{equation*}
u_\eps(\xx)
= 
u_0(\xx) + \eps u_1^\eps(\xx) + \eps^2 u_2^\eps(\xx) + \dots,
\end{equation*}
where $u_0$ is the homogenized solution and
$u_1,u_2$ are ``two scale'' functions usually called ``correctors''.
Since, in our case, the homogenization process only takes place in
$\Din$, we need an additional corrector, called the ``boundary corrector'',
to deal with the interface $\Gamma$. 

\subsubsection{The first-order corrector}

The first-order corrector
$u_1: \Din \times Y \to \CCC$ is defined by
\begin{equation}
\label{eq_def_first_order_corrector}
u_1(\bx,\by) \eq -\hchi_j(\by) \pd{u_0}{x_j}(\bx).
\end{equation}

\begin{lemma}[Key properties of the first-order corrector]
\label{lemma_u1_eps}
\begin{equation}
\label{eq_grad_u1}
\grady u_1 = -\hMC \grad u_0 \quad\text{ for a.e.~$\yy \in Y$},
\end{equation}
and
\begin{equation}
\label{eq_u1_L2}
k\|u_1^\eps\|_{L^2(\Din)} + \|(\gradx u_1)^\eps\|_{L^2(\Din)} \lesssim
k \N{u_0}_{H^1_k(B_R)} + |u_0|_{H^2(\Din)}.
\end{equation}
Furthermore, 
\begin{equation}
\label{eq_u1_H1}
\N{u_1^\eps}_{H^1_k(B_R)}
\lesssim
\left (1+(k\eps)^{-1}
\right )
\left (|u_0|_{H^2(\Din)} + k \N{u_0}_{H^1_k(B_R)}\right ),
\end{equation}
and, if $kR_0 \gtrsim 1$, 
\begin{equation}
\label{eq_u1_H12}
\N{u_1^\eps}_{H^{1/2}_k(\Gamma)}  
\lesssim
\big(1 + (k\eps)^{-1/2} \big) \left (k\N{u_0}_{H^1(\Din)} + \N{u_0}_{H^2(\Din)}\right ).
\end{equation}
\end{lemma}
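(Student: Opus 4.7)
The plan is to prove the four assertions in order, since the later ones rely on the earlier.  Assertions \eqref{eq_grad_u1}, \eqref{eq_u1_L2}, and \eqref{eq_u1_H1} are routine consequences of the definition $u_1^\eps(\bx)=-\hchi_j^\eps(\bx)\partial_{x_j}u_0(\bx)$ and the bounds $\|\hchi_j\|_{L^\infty(Y)},\|\grady\hchi_j\|_{L^\infty(Y)}\lesssim 1$ supplied by \eqref{eq_hchi}.  Specifically, \eqref{eq_grad_u1} follows by differentiating \eqref{eq_def_first_order_corrector} in $\by$ and invoking the definition \eqref{eq_def_C} of $\hMC$; \eqref{eq_u1_L2} follows from the pointwise inequalities $|u_1^\eps|\lesssim |\grad u_0|$ and $|(\gradx u_1)^\eps|\lesssim |\grad^2 u_0|$; and \eqref{eq_u1_H1} follows from the chain rule \eqref{eq_eps_diff}, which gives $\grad(u_1^\eps)=((\gradx u_1)+\eps^{-1}(\grady u_1))^\eps$, together with $\|(\grady u_1)^\eps\|_{L^2(\Din)}=\|\hMC^\eps\grad u_0\|_{L^2(\Din)}\lesssim \|\grad u_0\|_{L^2(\Din)}$.

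The main difficulty is \eqref{eq_u1_H12}.  The direct bound $\|u_1^\eps\|_{H^{1/2}_k(\Gamma)}\lesssim \|u_1^\eps\|_{H^1_k(\Din)}$ from Lemma \ref{lem:trace_weight}, combined with \eqref{eq_u1_H1}, would yield only the prefactor $(1+(k\eps)^{-1})$, not the sharper $(1+(k\eps)^{-1/2})$ required.  The improvement comes from replacing $u_1^\eps$ by a carefully localized extension of its boundary trace.  Set $h:=\min(\eps,1/k)$ and $N_h:=\{\bx\in \Din:\dist(\bx,\Gamma)<h\}$, and let $\chi_h\in C^\infty(\overline{\Din})$ be a cutoff with $\chi_h|_\Gamma=1$, $\chi_h\equiv 0$ outside $N_{2h}$, and $|\grad\chi_h|\lesssim h^{-1}$; such $\chi_h$ exists because $\Gamma$ is $C^{1,1}$.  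Then $v:=\chi_h u_1^\eps\in H^1(\Din)$ satisfies $v|_\Gamma=u_1^\eps|_\Gamma$, so Lemma \ref{lem:trace_weight} gives $\|u_1^\eps\|_{H^{1/2}_k(\Gamma)}\lesssim \|v\|_{H^1_k(\Din)}$.

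The key tool for estimating $\|v\|_{H^1_k(\Din)}$ is the Hardy-type layer inequality
\begin{equation*}
\|w\|^2_{L^2(N_h)}\lesssim h\|w\|^2_{L^2(\Gamma)}+h^2\|\grad w\|^2_{L^2(N_h)},\qquad w\in H^1(N_h),
\end{equation*}
obtained by integrating along normal lines to $\Gamma$ (permitted by $\Gamma\in C^{1,1}$).  Applying this to $u_1^\eps$, together with the Leibniz rule, yields $\|v\|^2_{H^1_k(\Din)}\lesssim (h^{-1}+k^2 h)\|u_1^\eps\|^2_{L^2(\Gamma)} + (1+k^2 h^2)\|\grad u_1^\eps\|^2_{L^2(N_h)}$; the choice of $h$ makes the prefactors $O(k(1+(k\eps)^{-1}))$ and $O(1)$ respectively.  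Crucially, $\|u_1^\eps\|_{L^2(\Gamma)}$ carries no oscillation penalty: writing $u_1^\eps|_\Gamma=-\hchi_j^\eps|_\Gamma\,\partial_j u_0|_\Gamma$, using $|\hchi_j^\eps|\lesssim 1$, and invoking the multiplicative trace \eqref{eq:multtrace} on $\partial_j u_0\in H^1(\Din)$ yields $\|u_1^\eps\|^2_{L^2(\Gamma)}\lesssim \|u_0\|^2_{H^1(\Din)}+\|u_0\|_{H^1(\Din)}\|u_0\|_{H^2(\Din)}$.  The gradient term is bounded via $|\grad u_1^\eps|\lesssim \eps^{-1}|\grad u_0|+|\grad^2 u_0|$ together with the Hardy-type inequality applied to $\grad u_0$.

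Substituting these estimates, and absorbing the cross terms $\eps^{-1}\|u_0\|_{H^1(\Din)}\|u_0\|_{H^2(\Din)}$ via Young's inequality with weight $\lambda=k\eps$, produces
\begin{equation*}
\|v\|^2_{H^1_k(\Din)}\lesssim (1+(k\eps)^{-1})\bigl(k\|u_0\|_{H^1(\Din)}+\|u_0\|_{H^2(\Din)}\bigr)^2,
\end{equation*}
whence \eqref{eq_u1_H12} follows by taking square roots and using $(1+a)^{1/2}\leq 1+a^{1/2}$.  The hardest part is calibrating $h$: the choice $h=\min(\eps,1/k)$ is forced by the tradeoff between the oscillation scale $\eps$ and the wavelength scale $1/k$, and correctly interpolates between the regimes $k\eps\leq 1$ and $k\eps\geq 1$.
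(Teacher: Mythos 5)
Your proofs of \eqref{eq_grad_u1}, \eqref{eq_u1_L2}, and \eqref{eq_u1_H1} are essentially identical to the paper's: differentiate the defining formula, use the $L^\infty$ control of $\hchi_j$ and $\hMC$ from \eqref{eq_hchi}, and apply the chain rule \eqref{eq_eps_diff}.

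For \eqref{eq_u1_H12}, however, your route is genuinely different from the paper's, and both are correct. The paper proves the trace estimate \emph{directly} on $\Gamma$: it views multiplication by $-\hchi_j^\eps$ as a linear operator $T:\vv\mapsto -\hchi_j^\eps v_j$, establishes $\|T\|_{\LL^2(\Gamma)\to L^2(\Gamma)}\lesssim 1$ and $\|T\|_{\HH^1(\Gamma)\to H^1(\Gamma)}\lesssim \eps^{-1}+1$, and then interpolates to get $\|T\|_{\HH^{1/2}(\Gamma)\to H^{1/2}(\Gamma)}\lesssim \eps^{-1/2}+1$. Combined with the ordinary trace estimate (Lemma \ref{lem:trace}) for $|\grad u_0|_{\HH^{1/2}(\Gamma)}$ and the multiplicative trace inequality \eqref{eq:multtrace} for $\|\grad u_0\|_{\LL^2(\Gamma)}$, this yields the result. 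You instead avoid operator interpolation entirely: you localize to a boundary layer $N_h$ of width $h=\min(\eps,1/k)$ with a cutoff $\chi_h$, reduce to estimating $\|\chi_h u_1^\eps\|_{H^1_k(\Din)}$ by the $k$-weighted trace theorem (Lemma \ref{lem:trace_weight}), and use a Hardy-type layer inequality in the tube together with the multiplicative trace and Young's inequality. Both give the same $(1+(k\eps)^{-1/2})$ prefactor; the paper's version is shorter once the interpolation machinery (real/complex $H^s(\Gamma)$ scale, cf.\ \cite[Appendix B]{Mc:00}) is taken for granted, whereas your boundary-layer argument is more elementary and makes the origin of the square-root gain transparent as a tradeoff between the scales $\eps$ and $1/k$. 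One small remark: you invoke $\Gamma\in C^{1,1}$ to justify the layer inequality and the cutoff construction, but both work already for Lipschitz $\Gamma$, which is all that the lemma as stated assumes (the $C^{1,1}$ hypothesis is imposed only later in Theorem \ref{main_theorem_homogenization}); this is a harmless over-assumption rather than a gap.
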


\begin{proof}
By the definitions of $\hMC$ \eqref{eq_def_C} and of $u_1$ \eqref{eq_def_first_order_corrector},
\begin{equation*}
\frac{\partial u_1}{\partial y_\ell}
=
-
\frac{\partial \hchi_j}{\partial y_\ell} \frac{\partial u_0}{\partial x_j}
=
\left (-\hMC \grad u_0\right )_\ell,
\end{equation*}
which is \eqref{eq_grad_u1}. By the definition of $u_1$ \eqref{eq_def_first_order_corrector}
and the bound on $\hchi_j$ \eqref{eq_hchi}, \begin{equation*}
\|u_1^\eps\|_{L^2(\Din)}
\leq
\|\hchi_j\|_{L^\infty(Y)}
\left \|\frac{\partial u_0}{\partial x_j}\right \|_{L^2(\Din)}
\lesssim
|u_0|_{H^1(\Din)}
\end{equation*}
and
\begin{equation*}
\|(\gradx u_1)^\eps\|_{L^2(\Din)}
\leq
\|\hchi_j\|_{L^\infty(Y)}
\left \|\frac{\partial u_0}{\partial x_j}\right \|_{H^1(\Din)}
\lesssim
|u_0|_{H^2(\Din)},
\end{equation*}
and then the bound \eqref{eq_u1_L2} follows.
By \eqref{eq_eps_diff} and \eqref{eq_grad_u1},
\beqs
\grad (u_1^\eps)
=
\left (\gradx u_1\right )^\eps + \frac{1}{\eps} \left (\grady u_1\right )^\eps
= \left (\gradx u_1\right )^\eps  - \frac{1}{\eps}\hMC^\eps \grad u_0.
\eeqs
Therefore
\begin{equation*}
\N{\grad(u_1^\eps)}_{L^2(\Din)}
\lesssim
|u_0|_{H^2(\Din)} + \eps^{-1} |u_0|_{H^1(\Din)}
\lesssim
(k\eps)^{-1} \left (|u_0|_{H^2(\Din)} + k \N{u_0}_{H^1_k(B_R)}\right ).
\end{equation*}
Then, the bound \eqref{eq_u1_H1} follows from combining this with the bound on
$\|u_1^\eps\|_{L^2(\Din)}$ in \eqref{eq_u1_L2}.

Let the operator $T: \LL^2(\Gamma) \to L^2(\Gamma)$ be defined by
$T\vv \eq - \hchi_j^\eps \vv_j$ for all $\vv \in \LL^2(\Gamma)$; then
\begin{equation*}
\|T\vv\|_{L^2(\Gamma)} \lesssim \|\vv\|_{\LL^2(\Gamma)} \quad \tfa \vv \in \LL^2(\Gamma).
\end{equation*}
In addition,  $T: \HH^1(\Gamma) \to H^1(\Gamma)$ with
\begin{equation*}
\|T\vv\|_{H^1(\Gamma)}
\lesssim
\eps^{-1} \|\vv\|_{\LL^2(\Gamma)} + \|\vv\|_{\HH^1(\Gamma)} \quad \tfa \vv \in \HH^1(\Gamma).
\end{equation*}
By interpolation (see, e.g., \cite[Appendix B]{Mc:00}), $T$ then maps $\HH^{1/2}(\Gamma)\to H^{1/2}(\Gamma)$ with
\begin{equation*}
\|T\vv\|_{H^{1/2}(\Gamma)}
\lesssim
\eps^{-1/2} \|\vv\|_{\LL^2(\Gamma)} + |\vv|_{\HH^{1/2}(\Gamma)}.
\end{equation*}
We now observe that $u_1^\eps|_\Gamma = T ((\grad u_0)|_\Gamma)$, so that
\begin{equation}\label{eq:Friday1}
\|u_1^\eps\|_{H^{1/2}(\Gamma)}
\lesssim
\eps^{-1/2} \|\grad u_0\|_{\LL^2(\Gamma)} + |\grad u_0|_{\HH^{1/2}(\Gamma)}.
\end{equation}
By Lemma \ref{lem:trace},
\begin{equation}\label{eq:Friday2}
\N{\nabla u_0}_{\HH^{1/2}(\Gamma)} \lesssim \|u_0\|_{H^2(\Din)},
\end{equation}
and, by \eqref{eq:multtrace},
\begin{align}\nonumber
\N{\grad u_0}_{\LL^2(\Gamma)}^2
&\lesssim
\frac{1}{R_0} \|u_0\|_{H^1(\Din)}^2 + \|u_0\|_{H^1(\Din)}\|u_0\|_{H^2(\Din)}
\\
&\lesssim
\frac{1}{k} \left ( 1 + \frac{1}{kR_0} \right )
\left (k^2\|u_0\|_{H^1(\Din)}^2 + \|u_0\|_{H^2(\Din)}^2\right ).\label{eq:Friday3}
\end{align}
Using \eqref{eq:Friday2} and \eqref{eq:Friday3} in \eqref{eq:Friday1},
and recalling the assumption that $kR_0 \gtrsim 1$, the result \eqref{eq_u1_H12} follows.
\end{proof}

\subsubsection{The second-order corrector}

The second-order corrector arises since $\hn^\eps$ varies (in addition to $\hMA^\eps$ 
varying); that corrector is formally defined by
\begin{equation*}
u_2(\xx,\yy) \eq k^2 \hmu(\yy) u_0(\xx),
\end{equation*}
where $\hmu$ is the unique element of $H^1_\sharp(Y)$ such that
\begin{equation}\label{eq_mu}
-\divy \left (\hMA \grady \hmu\right ) = \hn - \nH.
\end{equation}
By Lemma \ref{lemma_condition_patterns}, $\hmu \in C^1(\PY)$ with
$\|\hmu\|_{W^{1,\infty}(\PY)} \lesssim 1.$

We can actually bypass referring explicitly to this second-order corrector in the rest
of proof of Theorem \ref{main_theorem_homogenization}, and directly establish the following
result (where, both here and in the rest of the paper, $(\cdot,\cdot)_{\Din}$ denotes the $L^2(\Din)$ inner product, and similarly for inner products over other domains).

\begin{lemma}[Bound on the contribution from the second-order corrector]
\label{lemma_L2_term}
\begin{equation}
\label{eq_L2_term}
k^2 \big|\big((\hn^\eps - \nH) u_0,v\big)_\Din\big|
\lesssim
\eps \left (k\N{u_0}_{H^1_k(\Din)}\right ) \|v\|_{H^1_k(\Din)} \quad\tfa v \in H^1(\Din).
\end{equation}
\end{lemma}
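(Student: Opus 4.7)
The plan is to exploit the defining property of the second-order corrector $\hmu$, namely that $\hn - \nH = -\divy(\hMA \grady \hmu)$ (equation \eqref{eq_mu}), which expresses the zero-mean oscillatory coefficient $\hn - \nH$ as a periodic divergence. Rescaling, and writing $\bfgamma \eq \hMA \grady \hmu$, the chain rule applied at the distributional level yields
\begin{equation*}
\hn^\eps - \nH \;=\; -\,\eps \,\div\!\big(\bfgamma^\eps\big) \quad \text{in } \Din.
\end{equation*}
Because of Condition \ref{condition_periodic_patterns}, $\hMA \in L^\infty(Y)$ and $\hmu \in W^{1,\infty}(\PY)$, so $\bfgamma \in L^\infty(Y)$ and hence $\bfgamma^\eps \in L^\infty(\Din)$ with norm independent of $\eps$. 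Moreover, $\bfgamma$ has well-defined traces on each piece of $\PY$, so $\bfgamma^\eps \cdot \bn$ is defined almost everywhere on $\Gamma$ and bounded, uniformly in $\eps$.

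Next, I would integrate by parts. Since $u_0 \in H^2(\Din)$ (Lemma \ref{lemma_estimates_u0}) embeds into $H^1(\Din)\cap L^\infty(\Din)$ and $v \in H^1(\Din)$, the product $u_0 \bar v$ lies in $H^1(\Din)$, and $\bfgamma^\eps \in H(\div;\Din)$. Thus
\begin{equation*}
\big((\hn^\eps - \nH)u_0, v\big)_{\Din}
= \eps \int_{\Din} \bfgamma^\eps \cdot \grad(u_0 \bar v)
\;-\; \eps \int_{\Gamma} (\bfgamma^\eps \cdot \bn)\, u_0 \bar v .
\end{equation*}
Multiplying by $k^2$ and using the product rule, the bulk term is bounded by
\begin{equation*}
\eps \|\bfgamma^\eps\|_{L^\infty(\Din)} k^2 \Big(\|\grad u_0\|_{L^2(\Din)}\|v\|_{L^2(\Din)} + \|u_0\|_{L^2(\Din)}\|\grad v\|_{L^2(\Din)}\Big)
\;\lesssim\; \eps\,\big(k\|u_0\|_{H^1_k(\Din)}\big)\,\|v\|_{H^1_k(\Din)},
\end{equation*}
the last step distributing one factor of $k$ to each $L^2$ norm. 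For the boundary term, I would use the $k$-weighted multiplicative trace inequality \eqref{eq:multtrace2}, giving $\|u_0\|_{L^2(\Gamma)}\|v\|_{L^2(\Gamma)} \lesssim k^{-1}\|u_0\|_{H^1_k(\Din)}\|v\|_{H^1_k(\Din)}$, so that $k^2 \eps \|u_0\|_{L^2(\Gamma)}\|v\|_{L^2(\Gamma)} \lesssim \eps(k\|u_0\|_{H^1_k(\Din)})\|v\|_{H^1_k(\Din)}$, matching the desired right-hand side.

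The main obstacle, such as it is, lies in justifying the two distributional steps at the level of regularity we have: first, that the rescaled identity $\hn^\eps - \nH = -\eps \div(\bfgamma^\eps)$ genuinely holds in $\mathcal D'(\Din)$ (straightforward from the fact that $\bfgamma$ is periodic and $L^\infty$, so one may test against $\phi(\eps \cdot)$ and invoke $\eps$-periodicity), and second, that $\bfgamma^\eps \cdot \bn$ is controlled in $L^\infty(\Gamma)$ rather than only in $H^{-1/2}(\Gamma)$ --- the latter would give a useless $O(\eps^{-1})$ bound through $\|\div \bfgamma^\eps\|_{L^2} = O(\eps^{-1})$. This is precisely where the piecewise regularity in Condition \ref{condition_periodic_patterns} (feeding into $\hmu \in W^{1,\infty}(\PY)$) is used, and it is the reason we avoid relying on the abstract $H(\div)$ trace theorem.
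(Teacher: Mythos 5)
Your proof is correct and follows essentially the same route as the paper's: both use the cell problem \eqref{eq_mu} to write $\hn^\eps-\nH$ as $-\eps\,\div$ of the uniformly bounded, piecewise-continuous field $(\hMA\grady\hmu)^\eps$, integrate by parts in $\Din$, and bound the resulting bulk terms by Cauchy--Schwarz and the boundary term via the $k$-weighted multiplicative trace inequality \eqref{eq:multtrace2}. Your closing remark --- that one must use the piecewise $L^\infty$ control of the normal trace rather than the abstract $H(\div)$ trace, since the latter would reintroduce an $O(\eps^{-1})$ factor --- is exactly the point the paper makes when it records that $(\hMA\grady\hmu)^\eps$ is piecewise continuous with $L^\infty(\partial\Din)$ norm bounded uniformly in $\eps$.
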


\begin{proof}
Since both $\hMA$ and $\hmu$ are independent of $\xx$, by \eqref{eq_mu},
\begin{align*}
-\div \left [(\hMA \grady \hmu)^\eps\right ]
=
-\frac{1}{\eps} \left[
\divy (\hMA \grady \hmu)
\right ]^\eps
=
\frac{1}{\eps} (\hn^\eps - \nH) \in L^\infty(\Din).
\end{align*}
Therefore,
\begin{align*}
-\div (u_0 (\hMA \grady \hmu)^\eps)
&=
-u_0 \div (\hMA \grady \hmu)^\eps
-\grad u_0 \cdot (\hMA \grady \hmu)^\eps
\\
&=
\frac{1}{\eps} (\hn^\eps - \nH) u_0
-\grad u_0 \cdot (\hMA \grady \hmu)^\eps.
\end{align*}
By Condition \ref{condition_periodic_patterns} and Lemma \ref{lemma_condition_patterns},
$\hMA \in C^0(\PY)$ and $\hmu \in C^1(\PY)$. Therefore, the function $(\hMA \grady \hmu)^\eps$
is piecewise continuous on a partition of $\Din$ consisting of a finite number of open sets,
so that its trace on $\partial \Din$ is well-defined. In addition,
\begin{equation*}
\|(\hMA\grady \hmu)^\eps\|_{L^\infty(\Din)}
\leq
\|\hMA\|_{L^\infty(Y)}\|\grady \hmu\|_{L^\infty(Y)}
\lesssim 1
\end{equation*}
and similarly
\begin{equation*}
\|(\hMA\grady \hmu)^\eps\|_{L^\infty(\partial \Din)}
\lesssim 1.
\end{equation*}
By Green's first identity in $\Din$, 
\begin{align*}
\int_\Din u_0 (\hMA \grady \hmu)^\eps \cdot \grad v
&=
\int_\Gamma u_0 (\hMA \grady \hmu)^\eps \cdot \nnu v
-\int_\Din \div (u_0 \hMA \grady \hmu)^\eps v
\\
&=
\int_\Gamma u_0 (\hMA \grady \hmu)^\eps \cdot \nnu v
+\frac{1}{\eps} \int_\Din (\hn^\eps - \nH) u_0 v
-
\int_\Din \grad u_0 \cdot (\hMA \grady \hmu)^\eps v,
\end{align*}
and thus
\begin{equation*}
\frac{1}{\eps} \int_\Din (\hn^\eps-n^H) u_0 v
=
\int_\Gamma u_0 (\hMA \grady \hmu)^\eps \cdot \nnu v
-
\int_\Din \grad u_0 \cdot (\hMA \grady \hmu)^\eps v
-
\int_\Din u_0 (\hMA \grady \hmu)^\eps \cdot \grad v.
\end{equation*}
To estimate each term in the right-hand side, we use the Cauchy-Schwarz
inequality on each $L^2$ inner-product, so that
\begin{align*}
|((\hn^\eps-\nH)u_0,v)_{\Din}|
&\leq
\eps
\|(\hMA \grady \hmu)^\eps\|_{L^\infty(\Din)}\|u_0\|_{L^2(\Din)}|v|_{H^1(\Din)}
\\
&\,+
\eps\|(\hMA \grady \hmu)^\eps\|_{L^\infty(\Din)}|u_0|_{H^1(\Din)}\|v\|_{L^2(\Din)}
+
\eps\|(\hMA \grady \hmu)^\eps\|_{L^\infty(\Gamma)}\|u_0\|_{L^2(\Gamma)}\|v\|_{L^2(\Gamma)}
\\
&\lesssim
\eps \Big(
\|u_0\|_{L^2(\Din)}|v|_{H^1(\Din)}
+
|u_0|_{H^1(\Din)}\|v\|_{L^2(\Din)}
+
\|u_0\|_{L^2(\Gamma)}\|v\|_{L^2(\Gamma)}
\Big).
\end{align*}
Multiplying by $k^2$ and then using  \eqref{eq:multtrace2}, we have
\begin{align*}
k^2 |((\hn^\eps-\nH)u_0,v)_{\Din}|
&\lesssim
k\eps
\Big(
k\|u_0\|_{L^2(\Din)}|v|_{H^1(\Din)}
+
k|u_0|_{H^1(\Din)}\|v\|_{L^2(\Din)}
+
k\|u_0\|_{L^2(\Gamma)}\|v\|_{L^2(\Gamma)}
\Big)
\\
&\lesssim
k\eps \|u_0\|_{H^1_k(\Din)} \|v\|_{H^1_k(\Din)},
\end{align*}
which is the result \eqref{eq_L2_term}.
\end{proof}

\subsubsection{Boundary corrector}

The following lemma is proved using Theorem \ref{theorem_vector_potential_main} from
Appendix \ref{appendix_vector_potentials}. A similar result (without the explicit $k$-dependence)
appears in \cite[Lemma 2.2]{OnVe:07} and \cite[Page 2536]{CaGuMo:16}.

\begin{lemma}[Existence of vector potential]
\label{lemma_vector_potential}
There exists a vector potential $\qq: \Din \times Y \to \CCC^d$ such that
\begin{equation}\label{eq_vpq} 
\curly \qq = (\hMB - \MAH) \grad u_0,
\end{equation}
\begin{equation}
\label{eq_curl_qq_L2}
\|(\curlx \qq)^\eps\|_{L^2(\Din)}
\lesssim
|u_0|_{H^2(\Din)}
\end{equation}
and, if $kR_0\gtrsim 1$,
\beq\label{eq_curl_qq_trace_H12}
\N{\curl \qq^\eps \cdot \nnu}_{\HH^{-1/2}(\Gamma)}
\lesssim
(1+(k\eps)^{-1/2}) \left ( k\N{u_0}_{H^{1}(\Din)}+ \N{u_0}_{H^2(\Din)}\right ).
\eeq
\end{lemma}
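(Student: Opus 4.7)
The plan is to reduce the lemma to an appeal to Theorem \ref{theorem_vector_potential_main} in the appendix, which provides vector potentials for divergence-free, mean-free, periodic matrix fields, and then to essentially repeat the interpolation/trace argument already used for the first-order corrector in Lemma \ref{lemma_u1_eps}.

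\textbf{Step 1 (Construction of column-wise vector potentials).} First I would check that each column $(\hMB - \MAH)\be_j$ of the matrix $\hMB - \MAH: Y \to \RRR^{d\times d}$ has two properties: (a) zero mean on $Y$, which is immediate from the definition \eqref{eq_def_AH}; and (b) zero divergence in $\yy$. For (b), I would compute
\begin{equation*}
\sum_m \pd{(\hMB)_{m\ell}}{y_m}
=
\sum_m \pd{(\hMA)_{m\ell}}{y_m}
-
\divy(\hMA \grady \hchi_\ell)
=
0,
\end{equation*}
where the last equality uses the equation \eqref{eq_hchi_def} for $\hchi_\ell$ together with the symmetry of $\hMA$. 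With these two properties in hand, Theorem \ref{theorem_vector_potential_main} (applied column-by-column) yields $Y$-periodic vector potentials $\bpsi^{(j)}$, $j=1,\ldots,d$, satisfying $\curly \bpsi^{(j)} = (\hMB - \MAH)\be_j$, and inheriting from $\hMB - \MAH$ the regularity afforded by Condition \ref{condition_periodic_patterns}; in particular $\|\bpsi^{(j)}\|_{L^\infty(Y)} \lesssim 1$ and $\|\bpsi^{(j)}\|_{W^{1,\infty}(\PY)} \lesssim 1$.

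\textbf{Step 2 (Definition of $\qq$ and easy bounds).} Then I would define $\qq(\xx,\yy) := \bpsi^{(j)}(\yy)\,\partial_{x_j} u_0(\xx)$ (summed over $j$). Identity \eqref{eq_vpq} follows by direct differentiation since $\bpsi^{(j)}$ is independent of $\xx$; likewise $\curlx \qq$ only involves the $\bpsi^{(j)}$ (not their $\yy$-derivatives) multiplied by the second partials of $u_0$, so Lemma \ref{lem:trace} combined with $\|\bpsi^{(j)}\|_{L^\infty} \lesssim 1$ gives $\|(\curlx \qq)^\eps\|_{L^2(\Din)} \lesssim |u_0|_{H^2(\Din)}$, proving \eqref{eq_curl_qq_L2}.

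\textbf{Step 3 (Normal trace via duality).} For \eqref{eq_curl_qq_trace_H12} the idea is duality. Given $\phi \in H^{1/2}(\Gamma)$, extend it by Lemma \ref{lem:trace_weight} to $\Phi \in H^1(\Din)$ with $\|\Phi\|_{H^1_k(\Din)} \lesssim \|\phi\|_{H^{1/2}_k(\Gamma)}$. Since $\div\curl\qq^\eps = 0$, Green's formula gives $\langle \curl\qq^\eps \cdot \nnu, \phi \rangle_\Gamma = \int_{\Din} \curl\qq^\eps \cdot \grad \Phi$, and the vector Green identity (using $\curl\grad\Phi = 0$) then yields
\begin{equation*}
\langle \curl\qq^\eps \cdot \nnu, \phi\rangle_\Gamma = \int_\Gamma (\qq^\eps \times \nnu) \cdot \grad \Phi = \int_\Gamma \qq^\eps \cdot \vcurl_\Gamma \phi,
\end{equation*}
where the final expression depends only on $\phi$ because $\nnu \times \grad\Phi$ on $\Gamma$ reduces to $\vcurl_\Gamma \phi$. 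Bounding by Cauchy--Schwarz in $H^{1/2}(\Gamma) \times H^{-1/2}(\Gamma)$ together with the continuity of $\vcurl_\Gamma: H^{1/2}(\Gamma) \to \HH^{-1/2}(\Gamma)$ reduces \eqref{eq_curl_qq_trace_H12} to an estimate on $\|\qq^\eps\|_{\HH^{1/2}(\Gamma)}$.

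\textbf{Step 4 (The interpolation estimate; main obstacle).} The final, and most delicate, step is exactly analogous to the derivation of \eqref{eq_u1_H12}: I would introduce the multiplier operator $T: \vv \mapsto (\bpsi^{(j)})^\eps v_j$ and check it is bounded $\LL^2(\Gamma)\to\LL^2(\Gamma)$ (since $\bpsi^{(j)}\in L^\infty$) and $\HH^1(\Gamma)\to H^1(\Gamma)$ with norm $\lesssim \eps^{-1}\|\vv\|_{\LL^2} + \|\vv\|_{\HH^1}$ (using $\bpsi^{(j)} \in W^{1,\infty}(\PY)$). Interpolation then yields a bound in $\HH^{1/2}(\Gamma)$ with an $\eps^{-1/2}$ prefactor on the $L^2$ part. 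Taking $\vv = (\grad u_0)|_\Gamma$, applying Lemma \ref{lem:trace} to the $H^{1/2}$ piece and the multiplicative trace inequality \eqref{eq:multtrace} together with the assumption $kR_0 \gtrsim 1$ to the $L^2$ piece, gives
\begin{equation*}
\|\qq^\eps\|_{H^{1/2}(\Gamma)} \lesssim \big(1 + (k\eps)^{-1/2}\big) \big(k\|u_0\|_{H^1(\Din)} + \|u_0\|_{H^2(\Din)}\big),
\end{equation*}
which, combined with Step~3, gives \eqref{eq_curl_qq_trace_H12}. The main obstacle is really encapsulated in Step~1: the piecewise-smooth regularity of $\bpsi^{(j)}$ needed for the interpolation argument in Step~4 is not automatic and requires precisely the regularity assumptions of Condition \ref{condition_periodic_patterns} to be fed through the appendix result; once that regularity is granted, Steps~2--4 are comparatively routine adaptations of arguments already in the paper.
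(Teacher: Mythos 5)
The overall plan of reducing to the appendix result is right, and Steps 1--3 are basically sound modulo small slips (Step 1 should cite Theorem~\ref{theorem_vector_potential} or Lemma~\ref{lemma_smooth_vector_potential} to build the column potentials $\bpsi^{(j)}$ -- Theorem~\ref{theorem_vector_potential_main} is the bivariate result you are trying to reach -- and you also need to check $y_d$-independence of $\hMB-\MAH$, since that is what makes the $(d-1)$-dimensional Sobolev embedding produce the $L^\infty$ bound on $\bpsi^{(j)}$ in the first place). The paper's proof of the lemma itself is much shorter: it just verifies the three hypotheses of Theorem~\ref{theorem_vector_potential_main} -- $(\hMB-\MAH)\in H^s(Y)\cap L^\infty(Y)$, $y_d$-independence, and $\divy((\hMB-\MAH)\bw)=0$ -- and reads the three bounds off that theorem.

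The genuine gap is in Step~4. You assert $\|\bpsi^{(j)}\|_{W^{1,\infty}(\PY)}\lesssim 1$, claiming this is ``inherited from $\hMB-\MAH$''. That cannot be justified from what is available. The map $\pphi\mapsto\bpsi$ is the Fourier multiplier $\qq^\al=-\tfrac{1}{2\pi\ri}\,\al\times\pphi^\al/|\al|^2$ (Lemma~\ref{lemma_smooth_vector_potential}), which is nonlocal and does \emph{not} preserve piecewise smoothness, so the partition $\PY$ is irrelevant for $\bpsi^{(j)}$. What one actually gets is $\bpsi^{(j)}\in\HH^{1+s}(Y)$ for $s<1/2$ (Lemma~\ref{lem:A1}, using that $\hMB-\MAH$ is at best piecewise, hence in $H^s$ only for $s<1/2$). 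Since $\bpsi^{(j)}$ depends on $d-1\leq 2$ variables, this embeds into $C^{0,t}$ for $t<1/2$, giving the $L^\infty$ bound, but it does \emph{not} give $W^{1,\infty}$: already for $\pphi$ an indicator function, $\nabla\bpsi^{(j)}$ has a logarithmic singularity at the interface. So the multiplier $T:\vv\mapsto(\bpsi^{(j)})^\eps v_j$ cannot be shown to map $\HH^1(\Gamma)\to H^1(\Gamma)$ with the desired $\eps^{-1}$ dependence, and the interpolation in Step~4 collapses.

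The paper avoids this by interpolating a different operator: $T:\vv\mapsto\curl\qq^\eps\cdot\nnu$ directly (proof of Theorem~\ref{theorem_bivariate_vector_potential}). The $\LL^2(\Gamma)\to\HH^{-1}(\Gamma)$ endpoint only needs the $L^\infty$ bound on $\qq$ (via $\|\curl\qq^\eps\cdot\nnu\|_{\HH^{-1}(\Gamma)}\leq\|\qq^\eps\times\nnu\|_{\LL^2(\Gamma)}$). The $\HH^1(\Gamma)\to\LL^2(\Gamma)$ endpoint uses the structural identity
\beqs
\curl\qq^\eps = (\curlx\qq)^\eps + \frac{1}{\eps}(\curly\qq)^\eps = (\curlx\qq)^\eps + \frac{1}{\eps}\hM^\eps\vv,
\eeqs
so that the $\eps^{-1}$ factor lands on $\hM^\eps\vv$, controlled by $\|\hM\|_{\LL^\infty(Y)}$, while $(\curlx\qq)^\eps$ costs one $\xx$-derivative of $\vv$; no pointwise $\yy$-regularity of $\qq$ beyond $L^\infty$ is needed. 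Replacing your Step~4 with this argument closes the gap.
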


\bpf[Proof of Lemma \ref{lemma_vector_potential} using Theorem \ref{theorem_vector_potential_main}]
To apply Theorem \ref{theorem_vector_potential_main}, we need to show that 
(i) $ (\hMB - \MAH) \in H^s(Y) \cap L^\infty(Y)$ for some $s>0$, (ii) $(\hMB - \MAH)$ does not depend on $y_d$, and 
(iii) $\divy ((\hMB - \MAH) \bw)=0$ for all $\bw \in \Com^d$ independent of $\by$.

Regarding (i): this is shown under \eqref{eq_def_AH}.
Regarding (ii): $\MAH$ is constant, and therefore independent of $y_d$. $\hMA$ is independent of $y_d$ by Condition \ref{condition_periodic_patterns}, therefore so are the  $\hchi_j$ by Lemma \ref{lemma_condition_patterns}, therefore so is $\hMC$ by \eqref{eq_def_C}, and therefore so is $\hMB$ by \eqref{eq_B}.
Regarding (iii): by the definition of $\hMB$ \eqref{eq_B}, for $\bw \in \Com^d$ independent of $\by$,
\beqs
\divy\big( (\hMB -\MAH) \bw\big) = \divy\big( \hMB\bw\big) = \divy(\hMA\bw) -\divy(\hMA\hMC\bw),
\eeqs
which equals zero by \eqref{eq_hchi_def} and the definition of $\hMC$ \eqref{eq_def_C}. 
\epf

Let $b^\eps(\cdot,\cdot)$ denote the sesquilinear form $b(\cdot,\cdot)$ \eqref{eq:sesqui} with coefficients $\MA_\eps$ and $n_\eps$. Let $\bout(\cdot,\cdot)$ denote the sesquilinear form $b(\cdot,\cdot)$ with integration over $B_R$ replaced by integration over $\Dout$ (and recall that, in $\Dout$, $\MA_\eps=\MI$ and $n_\eps=1$ by \eqref{eq_n_eps_A_eps}).

We now define the boundary corrector $\theta_\eps$ as the unique element
of $H^1(B_R)$ such that
\begin{equation}
\label{eq_boundary_corrector}
b^\eps(\theta_\eps,v)
=
\bout(\EE(u_1^\eps),v) + \big(\curl \qq^\eps \cdot \nnu,v\big)_\Gamma \quad\tfa v\in H^1(B_R),
\end{equation}
where $E$ is the extension operator in Lemma \ref{lem:trace}.
Since the right-hand side of \eqref{eq_boundary_corrector} is an element of
$\left (H^1(B_R)\right )'$, the problem \eqref{eq_boundary_corrector} is indeed well-posed;
the rationale for choosing this particular right-hand side becomes clear in the proof of
Lemma \ref{lemma_homogenization_with_theta} below.

\begin{lemma}[Bound on the norm of the boundary corrector]
\label{lemma_boundary_corrector}
If $\theta_\eps \in H^1(B_R)$ is the boundary corrector defined by
\eqref{eq_boundary_corrector} and $kR_0\gtrsim 1$, then
\begin{align}
\label{eq_estimate_theta}
\|\theta_\eps\|_{H^1_k(B_R)}
&\lesssim
\Clayerh
\big(\N{u_1^\eps}_{H^{1/2}_k(\Gamma)}
+
\N{\curl \qq^\eps  \cdot \nnu}_{H^{-1/2}(\Gamma)}
\big)
\\
\nonumber
&\lesssim
\Clayerh \big(1+(k\eps)^{-1/2}\big) \big( k\N{u_0}_{H^1(\Din)}  + \N{u_0}_{H^2(\Din)} \big).
\end{align}
\end{lemma}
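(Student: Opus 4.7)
The plan is to view $\theta_\eps$ as the solution of the Helmholtz transmission variational problem \eqref{eq:vf} with coefficients $\MA_\eps, n_\eps$ and right-hand side
\begin{equation*}
F_\eps(v) \eq \bout(\EE(u_1^\eps),v) + (\curl \qq^\eps \cdot \nnu,v)_\Gamma, \qquad v \in H^1(B_R),
\end{equation*}
and then invoke the uniform-in-$\eps$ bound \eqref{main_eq_estimate_Cse} on $\Cse$ coming from Theorem~\ref{main_theorem_stability}. Since $F_\eps \in (H^1(B_R))'$ by construction, this immediately gives
\begin{equation*}
\N{\theta_\eps}_{H^1_k(B_R)} \le \Cse \, \N{F_\eps}_{(H^1_k(B_R))'} \lesssim \Clayerh \, \N{F_\eps}_{(H^1_k(B_R))'},
\end{equation*}
so the first inequality in \eqref{eq_estimate_theta} is reduced to estimating the two pieces of $F_\eps$ in the $(H^1_k(B_R))'$ norm.

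For the volume piece $\bout(\EE(u_1^\eps),\cdot)$, I would use continuity of $\bout$ on $H^1_k(B_R)$ (this is Lemma~\ref{lem:cont} restricted to $\Dout$, where $\MA_\eps=\MI$ and $n_\eps=1$, combined with the DtN bound \eqref{eq:TR2}) to obtain
\begin{equation*}
|\bout(\EE(u_1^\eps),v)| \lesssim \N{\EE(u_1^\eps)}_{H^1_k(B_R)} \N{v}_{H^1_k(B_R)},
\end{equation*}
and then apply the $k$-weighted stability of the extension operator \eqref{eq_stability_extension} to conclude $\N{\EE(u_1^\eps)}_{H^1_k(B_R)} \lesssim \N{u_1^\eps}_{H^{1/2}_k(\Gamma)}$. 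For the boundary piece, the duality pairing bound and the trace estimate of Lemma~\ref{lem:trace_weight} give
\begin{equation*}
\big|(\curl \qq^\eps \cdot \nnu,v)_\Gamma\big| \le \N{\curl \qq^\eps \cdot \nnu}_{H^{-1/2}(\Gamma)} \N{v}_{H^{1/2}(\Gamma)} \lesssim \N{\curl \qq^\eps \cdot \nnu}_{H^{-1/2}(\Gamma)} \N{v}_{H^1_k(B_R)},
\end{equation*}
where I am using $\N{v}_{H^{1/2}(\Gamma)} \le \N{v}_{H^{1/2}_k(\Gamma)}$ (as $kR_0 \gtrsim 1$, so $k$ is bounded below). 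Summing the two contributions yields the first line of \eqref{eq_estimate_theta}.

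The second line of \eqref{eq_estimate_theta} then follows by directly inserting the two previously established estimates: \eqref{eq_u1_H12} from Lemma~\ref{lemma_u1_eps} for $\N{u_1^\eps}_{H^{1/2}_k(\Gamma)}$, and \eqref{eq_curl_qq_trace_H12} from Lemma~\ref{lemma_vector_potential} for $\N{\curl \qq^\eps \cdot \nnu}_{H^{-1/2}(\Gamma)}$, both of which produce the same $\big(1+(k\eps)^{-1/2}\big)\big(k\N{u_0}_{H^1(\Din)}+\N{u_0}_{H^2(\Din)}\big)$ upper bound. No delicate cancellation is needed at this step; the only subtle point in the whole argument is to make sure the $k$-weights line up properly so that the trace and extension bounds are applied in their $k$-weighted form, and this is precisely the role played by the hypothesis $kR_0 \gtrsim 1$. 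The main conceptual obstacle is really not in this lemma itself but in having set up a right-hand side for $\theta_\eps$ in \eqref{eq_boundary_corrector} that (i) defines a bounded functional on $H^1(B_R)$ and (ii) cancels the obstructions to $u_\eps - u_0 - \eps u_1^\eps$ solving a homogeneous transmission problem; given this setup, the estimate itself reduces to a clean application of stability plus continuity arguments.
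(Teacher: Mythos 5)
Your proof follows essentially the same route as the paper's: define the right-hand-side functional $F_\eps$ (the paper calls it $\psi$), apply the uniform-in-$\eps$ stability bound \eqref{main_eq_estimate_Cse} to reduce to estimating $\N{F_\eps}_{(H^1_k(B_R))'}$, bound the two pieces via continuity of $\bout$ with the extension bound \eqref{eq_stability_extension} and via duality with the trace bound respectively, and then insert \eqref{eq_u1_H12} and \eqref{eq_curl_qq_trace_H12}. The argument is correct and the same as the paper's, with your explicit remark about $kR_0\gtrsim 1$ allowing one to absorb weighted/unweighted norm discrepancies simply making explicit what the paper leaves implicit.
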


\begin{proof}
Let $\psi \in \left (H^1(B_R)\right )'$ be defined such that
\begin{equation*}
\langle \psi, v \rangle
=
b^\eps(\theta_\eps,v) \quad\tfa v \in H^1(B_R).
\end{equation*}
By \eqref{main_eq_estimate_Cse},
\beqs
\|\theta_\eps\|_{H^1_k(B_R)} \leq \Clayerh \|\psi\|_{(H^{1}_k(B_R))'},
\eeqs
so that it remains to estimate the dual norm of $\psi$.
By the definition of $\theta_\eps$ \eqref{eq_boundary_corrector}, continuity of $\bout$, and Lemma \ref{lem:trace}, for all $v \in H^1(B_R)$, 
\begin{align*}
|\langle \psi, v\rangle |
&
\leq
|b_{\rm out}(\EE(u_1^\eps),v)| + |((\curl \qq^\eps) \cdot \nnu,v)_\Gamma|_.
\\
&
\lesssim
\|\EE(u_1^\eps)\|_{H^1_k(\Dout)}\|v\|_{H^1_k(\Dout)}
+
\|(\curl \qq^\eps) \cdot \nnu\|_{H^{-1/2}(\Gamma)}\|v\|_{H^{1/2}(\Gamma)}
\\
&\lesssim
\left (
\|u_1^\eps\|_{H^{1/2}_k(\Gamma)}
+
\|(\curl \qq^\eps) \cdot \nnu\|_{H^{-1/2}(\Gamma)}
\right )
\|v\|_{H^1_k(B_R)},
\end{align*}
and the  inequality \eqref{eq_estimate_theta} follows. The second inequality then follows from using the bounds \eqref{eq_u1_H12} and \eqref{eq_curl_qq_trace_H12}.
\end{proof}

\subsection{Proof of Theorem \ref{main_theorem_homogenization}}

The proof relies on the following error esimate involving the boundary corrector $\theta_\eps$,
whose proof is postponed until Section \ref{section_homogenization_proof}.

\begin{lemma}[Error estimate with a boundary corrector]
\label{lemma_homogenization_with_theta}
Let 
$u_{1,\eps} \in H^1(B_R)$ be defined by
\begin{equation}
u_{1,\eps} \eq \left \{
\begin{array}{ll}
u_1^\eps & \text{ in } \Din,
\\
\EE(u_1^\eps) & \text{ in } \Dout.
\end{array}
\right .
\label{eq_u1eps_def}
\end{equation}
where the extension operator $E$ is as in Lemma \ref{lem:trace_weight}.
Then,
\begin{equation}
\label{eq_error_estimate_u}
\big\|u_\eps - \big(u_0 + \eps (u_{1,\eps} - \theta_\eps)\big)\big\|_{H^1_k(B_R)}
\lesssim
\Clayerh \eps \left ( \N{u_0}_{H^2(B_R)}+ k \N{u_0}_{H^1_k(B_R)} \right ).
\end{equation}
\end{lemma}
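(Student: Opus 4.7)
\noindent\emph{Proof plan.}
The plan is to define the error
$w_\eps := u_\eps - u_0 - \eps(u_{1,\eps} - \theta_\eps) \in H^1(B_R)$,
to show that the functional $v\mapsto b^\eps(w_\eps,v)$ is small in the dual $H^1_k(B_R)$-norm, and then to conclude via the stability bound \eqref{main_eq_estimate_Cse} (available uniformly in $\eps$ thanks to Theorem \ref{main_theorem_stability}) that
$\|w_\eps\|_{H^1_k(B_R)} \leq \Clayerh \|b^\eps(w_\eps,\cdot)\|_{(H^1_k(B_R))'}$, from which \eqref{eq_error_estimate_u} follows.

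The first step is to compute $b^\eps(w_\eps,v)$ for arbitrary $v\in H^1(B_R)$. Using (i) $b^\eps(u_\eps,v)=F(v) = b^H(u_0,v)$, where $b^H$ is the sesquilinear form with homogenized coefficients; (ii) the defining identity \eqref{eq_boundary_corrector} for $\theta_\eps$; (iii) the splitting of $b^\eps(u_{1,\eps},v)$ into its $\Din$- and $\Dout$-parts via \eqref{eq_u1eps_def}, which cancels the $b_{\rm out}(\EE(u_1^\eps),v)$ contribution; and (iv) the fact that $\MA_\eps=\MI=\MAH$ and $n_\eps=1=\nH$ in $\Dout$ (so the DtN contributions cancel), one obtains
\begin{equation*}
b^\eps(w_\eps,v) = \int_{\Din}(\MAH-\MA_\eps)\grad u_0\cdot\grad\bar v - k^2\int_{\Din}(\nH-n_\eps)u_0\bar v - \eps b_{\rm in}(u_1^\eps,v) + \eps\big((\curl \qq^\eps)\cdot\nnu,v\big)_\Gamma,
\end{equation*}
where $b_{\rm in}$ denotes the restriction of $b^\eps$ to volume integrals over $\Din$. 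The algebraic identity $\eps\,\MA_\eps\grad u_1^\eps = \eps\,\MA_\eps(\gradx u_1)^\eps + (\hMB^\eps - \MA_\eps)\grad u_0$ in $\Din$, which follows from \eqref{eq_eps_diff}, \eqref{eq_grad_u1}, and the definition \eqref{eq_B} of $\hMB$, then collapses the flux contribution to $\int_{\Din}(\MAH - \hMB^\eps)\grad u_0\cdot\grad\bar v$ plus $O(\eps)$ volume remainders. Next, Lemma \ref{lemma_vector_potential} together with the chain rule $(\curly \qq)^\eps = \eps\,\curl(\qq^\eps) - \eps(\curlx \qq)^\eps$ rewrites this as $-\eps\int_{\Din}\curl(\qq^\eps)\cdot\grad\bar v + \eps\int_{\Din}(\curlx \qq)^\eps\cdot\grad\bar v$, and the divergence-free identity $\div(\bar v\,\curl \qq^\eps) = \grad\bar v\cdot\curl \qq^\eps$ (valid weakly because $\div\circ\curl=0$) combined with the divergence theorem gives
\begin{equation*}
\int_{\Din}\curl(\qq^\eps)\cdot\grad\bar v = \big((\curl \qq^\eps)\cdot\nnu,v\big)_\Gamma,
\end{equation*}
which is exactly the boundary term that $\theta_\eps$ was engineered (via \eqref{eq_boundary_corrector}) to cancel.

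What remains in $b^\eps(w_\eps,v)$ is a sum of four volume integrals over $\Din$, each of which I would bound by $\eps\,(\|u_0\|_{H^2(\Din)}+k\|u_0\|_{H^1_k(B_R)})\|v\|_{H^1_k(B_R)}$: the $\eps(\curlx \qq)^\eps$ term via \eqref{eq_curl_qq_L2}; the $\eps\,\MA_\eps(\gradx u_1)^\eps$ and $\eps k^2 n_\eps u_1^\eps$ terms via \eqref{eq_u1_L2} (using $\|n_\eps\|_{L^\infty}\leq 1$ and the trivial inequality $k\|v\|_{L^2(\Din)}\leq \|v\|_{H^1_k(B_R)}$); and the $k^2(\nH - n_\eps)u_0$ term via Lemma \ref{lemma_L2_term}, which itself supplies the missing factor of $\eps$. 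Feeding the resulting dual-norm bound into \eqref{main_eq_estimate_Cse} yields \eqref{eq_error_estimate_u}.

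The principal delicate point I foresee is justifying the integration-by-parts identity producing the cancellation with $((\curl \qq^\eps)\cdot\nnu,v)_\Gamma$: because the cell functions generating $\qq$ are only piecewise smooth on the partition $\PY$ and enter $\qq^\eps$ through the rapidly oscillating cell variable $\{\xx/\eps\}$, one cannot apply the classical divergence theorem directly. Instead, $(\curl \qq^\eps)\cdot\nnu$ must be interpreted as an element of $H^{-1/2}(\Gamma)$, consistent with \eqref{eq_curl_qq_trace_H12}, and the integration by parts must be justified through a density or regularization argument (for example, approximating $\qq$ in each piece of $\PY$ by smooth functions and passing to the limit). The remaining steps amount to a careful accounting of the $O(\eps)$ remainders produced in the algebraic manipulations above, which is essentially bookkeeping once the cancellation is in place.
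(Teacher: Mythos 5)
Your proposal is correct and reproduces the substance of the paper's argument: the same reduction of $b^\eps(w_\eps,\cdot)$ via the defining identities for $u_\eps$, $u_0$, $u_{1,\eps}$ and $\theta_\eps$, the same algebraic rewriting introducing $\hMB^\eps$ and the vector potential $\qq$, the same cancellation of the $\big((\curl \qq^\eps)\cdot\nnu,\cdot\big)_\Gamma$ boundary term, and the same four remainder estimates (including the integration-by-parts interpretation of $(\curl\qq^\eps)\cdot\nnu \in H^{-1/2}(\Gamma)$, which the paper indeed establishes by exactly the density argument you describe). The only difference is packaging: the paper fixes $\psi\in L^2(B_R)$, introduces the adjoint solution $W_\eps$ with $b^\eps(v,W_\eps)=(v,\psi)_{B_R}$ and bounds the scalar $(w_\eps,\psi)_{B_R}$ (which also requires verifying adjoint stability via complex conjugation and appealing to the density of $L^2$ in $(H^1_k)'$), whereas you test $b^\eps(w_\eps,\cdot)$ against a generic $v$ and apply the stability bound $\|w_\eps\|_{H^1_k(B_R)}\leq\Clayerh\|b^\eps(w_\eps,\cdot)\|_{(H^1_k(B_R))'}$ directly — the two are mathematically equivalent, and yours is slightly more streamlined.
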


\bre
[Difference between our first-order and boundary correctors and those in \cite{CaGuMo:16}]
In \cite{CaGuMo:16}, the first-order corrector is defined by $u_1^\eps$ in $\Din$ and zero
in $\Dout$, and is therefore not in $H^1(B_R)$ because of the jump on $\Gamma$. Analogously,
the boundary corrector $\theta_\eps$ defined by \cite[Equation 18]{CaGuMo:16} has a jump on
$\Gamma$ and is not in $H^1(B_R)$. In contrast, both our first-order corrector $u_{1,\eps}$
\eqref{eq_u1eps_def} and our boundary corrector $\theta_\eps$ \eqref{eq_boundary_corrector}
are defined so that they are in $H^1(B_R)$.
\ere

\begin{proof}[Proof of Theorem \ref{main_theorem_homogenization}
using Lemma \ref{lemma_homogenization_with_theta}]
By (in this order) the triangle inequality, the definition of
$u_{1,\eps}$ \eqref{eq_u1eps_def}, the triangle inequality again and
the bound \eqref{eq_stability_extension}, the bounds \eqref{eq_error_estimate_u}
and \eqref{eq_estimate_theta}, and the fact that $\Clayerh\gtrsim 1$,
\begin{align*}
&\hspace{-1cm}
k \|u_\eps-u_0- \eps u_1^\eps\|_{H^1_k(\Din)} +k \|u_\eps-u_0\|_{H^1_k(\Dout)}
\\
&\leq
k \|u_\eps-u_0- \eps u_{1,\eps}\|_{H^1_k(B_R)} + k\eps \N{E(u_1^\eps)}_{H^1_k(B_R)},
\\
&\lesssim
k \|u_\eps-u_0- \eps (u_{1,\eps}-\theta_\eps)\|_{H^1_k(B_R)}
+
k\eps \N{\theta_\eps}_{H^1_k(B_R)}
+
k\eps \N{ u_1^\eps}_{H^{1/2}_k(\Gamma)},
\\
&\lesssim
\Clayerh k\eps \left ( \N{u_0}_{H^2(B_R)} + k\N{u_0}_{H^1_k(B_R)} \right )\\
&\qquad\quad+
\Clayerh k\eps
\left (
\N{ u_1^\eps}_{H^{1/2}_k(\Gamma)}
+
\N{\curl \qq^\eps  \cdot \nnu}_{H^{-1/2}(\Gamma)}
\right ).
\end{align*}
The bound \eqref{eq_homo_thm_1} then follows from the bounds \eqref{eq_curl_qq_trace_H12}
and \eqref{eq_u1_H12}; the bound \eqref{eq_homo_thm_2} then follows from the bound
\eqref{eq_u0_H2}.
\end{proof}

\subsection{Proof of Lemma \ref{lemma_homogenization_with_theta}}
\label{section_homogenization_proof}

Since 
\beqs
\|\phi\|_{H^1_k(B_R)}
=
\sup_{\substack{\psi \in (H^1_k(B_R))' \\ \psi \neq 0}}
\frac{\big|\langle \phi,\psi\rangle\big|}{\N{\psi}_{(H^1_k(B_R))'}}
\eeqs
(see, e.g., \cite[Page 76 and Theorem 3.30]{Mc:00}), if we can show that 
\begin{equation}
\label{tmp_duality_argument}
\big|(u_\eps-(u_0+\eps(u_{1,\eps}-\theta_\eps),\psi)_{B_R}\big|
\lesssim
\mathscr M
\|\psi\|_{(H^1_k(B_R))'}
\end{equation}
for all $\psi \in L^2(B_R)$, with
\begin{equation*}
\mathscr M := \Clayerh \eps \left (\N{u_0}_{H^2(B_R)}+k \N{u_0}_{H^1_k(B_R)} \right ),
\end{equation*}
then the bound \eqref{eq_error_estimate_u} follows since  $L^2(B_R)$ is dense in $(H^1(B_R))'$.
With $\psi$ an arbitrary element of $L^2(B_R)$, let $W_\eps \in H^1(B_R)$ be the solution of
the adjoint problem
\begin{equation}
\label{eq_Weps}
b^\eps(v,W_\eps) = (v,\psi)_{B_R} \quad \tfa v \in H^1(B_R).
\end{equation}
We now claim that
\begin{equation*}
b^\eps(\overline{W_\eps},v) = (\overline{\psi},v)_{B_R} \quad \tfa v \in H^1(B_R);
\end{equation*}
indeed, this follows from that fact that $b^\eps(\cdot,\cdot)$ is the sesquilinear form $b(\cdot,\cdot)$ \eqref{eq:sesqui} with coefficients $\MA_\eps$ and $n_\eps$, and the fact that $\langle \DtN \phi, \overline{\psi}\rangle_\Gamma = \langle \DtN \psi, \overline{\phi}\rangle_\Gamma$ (from Green's second identity and the Sommerfeld radiation condition \eqref{eq:src}; see, e.g., \cite[Lemma 6.13]{Sp:15}).
Then, by \eqref{main_eq_estimate_Cse}, 
\begin{equation}
\label{eq_stab_Weps}
\|W_\eps\|_{H^1_k(B_R)} \leq \Clayerh \|\psi\|_{(H^1_k(B_R))'}.
\end{equation}
The following lemmas involve different components of the inner product on the left-hand side
of \eqref{tmp_duality_argument}; we highlight that the first lemma is the point where the
existence of the vector-potential \eqref{eq_vpq} is used.

\begin{lemma}[The terms in the inner product \eqref{tmp_duality_argument} involving $u_\eps-u_0$]
\begin{align}
\nonumber
(u_\eps-u_0,\psi)_{B_R}
=
&-
k^2 ((n^H -\hn^\eps) u_0,W_\eps)_\Din
-
\big((\hMA\hMC)^\eps\grad u_0,\grad W_\eps\big)_\Din
\\
&\quad
+
\eps \big((\curlx \qq )^\eps,\grad W_\eps\big)_\Din
-
\eps \big((\curl \qq^\eps) \cdot \nnu,W_\eps\big)_\Gamma
\label{eq_lemma_ueps_u0}
\end{align}
\end{lemma}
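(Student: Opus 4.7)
The strategy is to test $u_\eps - u_0$ against $W_\eps$, rewrite $\MAH - \hMA^\eps$ using the vector potential $\qq$ of Lemma \ref{lemma_vector_potential} (modulo a remainder that produces the $(\hMA\hMC)^\eps$ term), and finally eliminate a bulk curl term by integration by parts.

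\emph{Step 1: reducing to a coefficient-difference integral.} By the definition of $W_\eps$ in \eqref{eq_Weps},
\begin{equation*}
(u_\eps - u_0, \psi)_{B_R}
=
b^\eps(u_\eps - u_0, W_\eps)
=
b^\eps(u_\eps, W_\eps) - b^\eps(u_0, W_\eps).
\end{equation*}
Since $u_\eps$ solves \eqref{eq:vf} with coefficients $(\MA_\eps, n_\eps)$ and $u_0$ solves \eqref{eq:vf} with coefficients $(\MAH, \nH)$, both against $F(v) = (f,v)_{B_R}$, the first term equals $(f, W_\eps)_{B_R}$, which in turn equals the sesquilinear form \eqref{eq:sesqui} built from $(\MAH, \nH)$ applied to $(u_0, W_\eps)$. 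The $\DtN$ contributions in this form and in $b^\eps(u_0, W_\eps)$ coincide (the $\DtN$ operator does not depend on the coefficients) and hence cancel; moreover $(\MA_\eps, n_\eps) = (\MAH, \nH) = (\MI, 1)$ in $\Dout$, so only integrals over $\Din$ survive and
\begin{equation*}
(u_\eps - u_0, \psi)_{B_R}
=
\big((\MAH - \hMA^\eps)\grad u_0, \grad W_\eps\big)_\Din
-
k^2\big((\nH - \hn^\eps) u_0, W_\eps\big)_\Din.
\end{equation*}
The second term is already the first summand of \eqref{eq_lemma_ueps_u0}; I would focus the rest of the proof on rewriting the gradient term.

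\emph{Step 2: invoking the vector potential.} By \eqref{eq_B}, $\hMB = \hMA(\MI - \hMC)$, so
\begin{equation*}
\MAH - \hMA^\eps = (\MAH - \hMB^\eps) - (\hMA\hMC)^\eps,
\end{equation*}
which produces immediately the $-\big((\hMA\hMC)^\eps\grad u_0,\grad W_\eps\big)_\Din$ summand of \eqref{eq_lemma_ueps_u0}. For the remaining piece, since neither $\MAH$ nor $\grad u_0$ depends on $\yy$, evaluating \eqref{eq_vpq} at $(\bx, \{\bx/\eps\})$ yields $(\hMB^\eps - \MAH)\grad u_0 = (\curly \qq)^\eps$, and the chain rule \eqref{eq_eps_diff} applied componentwise to $\qq^\eps$ gives $(\curly \qq)^\eps = \eps\,\curl(\qq^\eps) - \eps\,(\curlx \qq)^\eps$. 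Substituting,
\begin{equation*}
\big((\MAH - \hMB^\eps)\grad u_0, \grad W_\eps\big)_\Din
=
\eps\big((\curlx \qq)^\eps, \grad W_\eps\big)_\Din
-
\eps\big(\curl(\qq^\eps), \grad W_\eps\big)_\Din.
\end{equation*}

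\emph{Step 3: integration by parts.} Using that $\div\circ\curl = 0$ in the distributional sense, the standard Green's identity for $H(\div;\Din)$ paired with $H^1(\Din)$ gives
\begin{equation*}
\big(\curl(\qq^\eps), \grad W_\eps\big)_\Din
=
\big((\curl \qq^\eps)\cdot \nnu, W_\eps\big)_\Gamma,
\end{equation*}
the right-hand side being the $H^{-1/2}(\Gamma)$--$H^{1/2}(\Gamma)$ duality pairing. Assembling Steps 1--3 produces \eqref{eq_lemma_ueps_u0}.

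The only genuine subtlety is Step 3: because $\qq^\eps$ is only piecewise smooth, one must justify that $\curl \qq^\eps \in L^2(\Din,\CCC^d)$ and that $(\curl \qq^\eps)\cdot \nnu$ is a well-defined element of $H^{-1/2}(\Gamma)$. These are precisely the properties built into the construction of $\qq$ in Appendix \ref{appendix_vector_potentials} and recorded in \eqref{eq_curl_qq_L2} and \eqref{eq_curl_qq_trace_H12}; with them in hand, the integration by parts is routine.
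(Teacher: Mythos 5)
Your proof is correct and follows essentially the same route as the paper's: use $W_\eps$ to convert the inner product to a difference of sesquilinear forms, exploit $b^\eps(u_\eps,\cdot)=b^H(u_0,\cdot)$ to reduce to a coefficient-difference integral over $\Din$, split $\MAH-\hMA^\eps$ via $\hMB=\hMA(\MI-\hMC)$, substitute the vector potential and chain rule \eqref{eq_eps_diff}, and integrate the $\curl\qq^\eps$ term by parts using $\div\curl=0$. The sign bookkeeping is handled correctly throughout.
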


\begin{proof}
By the definition of $W_\eps$ \eqref{eq_Weps},
\begin{equation*}
(u_\eps-u_0,\psi)_{B_R} = b^\eps(u_\eps,W_\eps) - b^\eps(u_0,W_\eps).
\end{equation*}
Since $u_\eps$ and $u_0$ are the solutions to the oscillatory
and homogenized problems, respectively, 
\begin{equation*}
b^\eps(u_\eps,v) = (f,v)_{B_R} = b^H(u_0,v) \quad \tfa v \in H^1(B_R),
\end{equation*}
where $b^H(\cdot,\cdot)$ denotes the sesquilinear form $b(\cdot,\cdot)$ \eqref{eq:sesqui} with coefficients $\MAH$ and $\nH$ in $\Din$ and $\MA=\MI$ and $n=1$ in $\Dout$.
Therefore, since $b^H$ and $b^\eps$ are identical on $\Dout$,
\beq
\label{eq_48_tmp1}
(u_\eps-u_0,\psi)
=
b^H(u_0,W_\eps) - b^\eps(u_0,W_\eps)
=
\big((\MAH - \hMA^\eps) \grad u_0,\grad W_\eps\big)_\Din-k^2 \big((\nH - \hn^\eps) u_0,W_\eps\big)_\Din.
\eeq
Lemma \ref{lemma_L2_term} bounds the second term on the right-hand side; we therefore focus on the first term.
Since $\hMB \eq \hMA(I-\hMC)$, 
$(\MAH - \hMA^\eps) + (\hMA\hMC)^\eps = \MAH - \hMB^\eps$, and thus
\begin{align*}
\big((\MAH - \hMA^\eps) \grad u_0,\grad W_\eps\big)_\Din
+
\big((\hMA \hMC)^\eps \grad u_0,\grad W_\eps\big)_\Din
&=
\big((\MAH - \hMB^\eps) \grad u_0,\grad W_\eps\big)_\Din
\\
&=
\big( (\MAH - \hMB) \grad u_0 )^\eps,\grad W_\eps\big)_\Din.
\end{align*}
By the definition of $\qq$ \eqref{eq_vpq}, $((\MAH-\hMB) \grad u_0)^\eps = -(\curly \qq)^\eps$, and thus
\begin{equation*}
\big((\MAH - \hMA^\eps) \grad u_0,\grad W_\eps\big)_\Din
=
- \big((\hMA \hMC)^\eps \grad u_0,\grad W_\eps\big)_\Din
- \big((\curly \qq)^\eps ,\grad W_\eps\big)_\Din;
\end{equation*}
combining this with \eqref{eq_48_tmp1} yields
\begin{equation}\label{eq_48_tmp2}
(u_\eps-u_0,\psi)_{B_R}
=
-k^2 \big((n^H-\hn^\eps) u_0,W_\eps\big)_\Din 
-\big((\hMA\hMC)^\eps \grad u_0,\grad W_\eps\big)_\Din
- \big((\curly \qq)^\eps,\grad W_\eps\big)_\Din.
\end{equation}
We now focus on the term involving $(\curly \qq)^\eps$.
By \eqref{eq_eps_diff},
\begin{equation*}
\left (\curly \qq\right )^\eps = \eps
\big(
\curl \qq^\eps - \left (\curlx \qq\right )^\eps
\big)
\end{equation*}
and thus, by the divergence theorem, 
\begin{align*}
\big((\curly \qq)^\eps,\grad W_\eps\big)_\Din
&=
\eps \big(\curl \qq^\eps,\grad W_\eps\big)_\Din
-
\eps \big((\curlx \qq )^\eps,\grad W_\eps\big)_\Din
\\
&=
\eps \big((\curl \qq^\eps) \cdot \nnu,W_\eps\big)_{\Gamma}
-
\eps \big( (\curlx \qq)^\eps,\grad W_\eps\big)_\Din,
\end{align*}
where we have crucially used that $\div \curl \qq^\eps =0$;
the result \eqref{eq_lemma_ueps_u0} then follows by combining this last equality with \eqref{eq_48_tmp2}.
\end{proof}

\begin{lemma}[The term in the inner product \eqref{tmp_duality_argument} involving $\eps u_{1,\eps}$]
\begin{align}
\nonumber
-(\eps u_{1,\eps},\psi)_{B_R}
=
&
-\eps\, \bout\big(\EE(u_1^\eps),W_\eps\big)
+
\big((\hMA\hMC)^\eps \grad u_0,\grad W_\eps\big)_{\Din}
\\
\label{eq_lemma_tu1}
&\quad
+
k^2 \eps (\hn^\eps u_1^\eps,W_\eps)_{\Din}
-
\eps \big((\hMA \gradx u_1)^\eps,\grad W_\eps\big)_{\Din}.
\end{align}
\end{lemma}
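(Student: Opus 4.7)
The plan is a direct computation starting from the duality identity obtained by testing the adjoint equation \eqref{eq_Weps} against $u_{1,\eps}\in H^1(B_R)$, namely
\begin{equation*}
(u_{1,\eps},\psi)_{B_R} = b^\eps(u_{1,\eps},W_\eps),
\end{equation*}
and then unfolding the sesquilinear form using the chain rule \eqref{eq_eps_diff} together with the corrector identity \eqref{eq_grad_u1}. First I would split the form according to the partition $B_R = \Din \cup \Dout$. Since $\MA_\eps = \MI$ and $n_\eps = 1$ on $\Dout$ (and the DtN term lives on $\Gamma_R \subset \Dout$), the outer part contributes exactly $b_{\rm out}(\EE(u_1^\eps),W_\eps)$ by the definition \eqref{eq_u1eps_def} of $u_{1,\eps}$.

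On $\Din$, $u_{1,\eps} = u_1^\eps$, $\MA_\eps = \hMA^\eps$ and $n_\eps = \hn^\eps$, so the task reduces to rewriting $(\hMA^\eps\grad u_1^\eps, \grad W_\eps)_\Din$. By \eqref{eq_eps_diff} and \eqref{eq_grad_u1},
\begin{equation*}
\grad(u_1^\eps)
= (\gradx u_1)^\eps + \frac{1}{\eps}(\grady u_1)^\eps
= (\gradx u_1)^\eps - \frac{1}{\eps}\hMC^\eps\grad u_0,
\end{equation*}
and since $\hMA^\eps$ and $\hMC^\eps$ evaluate at the same fast variable $\{\bx/\eps\}$, multiplying by $\hMA^\eps$ yields
\begin{equation*}
\hMA^\eps \grad(u_1^\eps) = (\hMA\gradx u_1)^\eps - \frac{1}{\eps}(\hMA\hMC)^\eps \grad u_0.
\end{equation*}

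Plugging this into the inner-domain part of $b^\eps(u_1^\eps,W_\eps)$ gives
\begin{align*}
b^\eps_{\rm in}(u_1^\eps,W_\eps)
= &\, \big((\hMA\gradx u_1)^\eps,\grad W_\eps\big)_\Din
- \frac{1}{\eps}\big((\hMA\hMC)^\eps\grad u_0,\grad W_\eps\big)_\Din
\\
& - k^2 (\hn^\eps u_1^\eps, W_\eps)_\Din.
\end{align*}
Multiplying by $-\eps$ and combining with the $-\eps b_{\rm out}(\EE(u_1^\eps),W_\eps)$ contribution produces exactly \eqref{eq_lemma_tu1}. No step is really an obstacle here: the only thing to watch is to keep the sesquilinear convention straight (antilinearity in the second argument, so $\overline{\grad W_\eps}$ rather than $\grad W_\eps$ in the pointwise integrands) and to make sure that the factor $1/\eps$ coming from $\grady u_1$ correctly cancels against the prefactor $\eps$ when writing the final identity. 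The real substance of the argument is already encoded in the identity \eqref{eq_grad_u1} established in Lemma \ref{lemma_u1_eps}; the present lemma is its algebraic consequence.
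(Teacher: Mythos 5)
Your argument is correct and is essentially identical to the paper's own proof: both start from $-\eps(u_{1,\eps},\psi)_{B_R}=-\eps\,b^\eps(u_{1,\eps},W_\eps)$, split $b^\eps$ into its $\Dout$ and $\Din$ parts via the definition \eqref{eq_u1eps_def} of $u_{1,\eps}$, and then rewrite $\hMA^\eps\grad(u_1^\eps)$ using \eqref{eq_eps_diff} and \eqref{eq_grad_u1} so that the $\eps^{-1}(\hMA\hMC)^\eps\grad u_0$ term cancels the prefactor $\eps$. The computation and the signs are right; nothing is missing.
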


\begin{proof}
By the definitions of $W_\eps$ \eqref{eq_Weps} and $u_{1,\eps}$ \eqref{eq_u1eps_def}, 
\begin{align*}
-(\eps u_{1,\eps},\psi)_{B_R}=-\eps\, b^\eps(u_{1,\eps},W_\eps)
=
-\eps\, \bout(\EE(u_1^\eps),W_\eps)
-\eps \,\bin^\eps(u_1^\eps,W_\eps),
\end{align*}
where $\bin^\eps(\cdot,\cdot):= b^\eps(\cdot,\cdot)- \bout(\cdot,\cdot)$.
By this definition,
\beqs
-\eps\,\bin^\eps(u_1^\eps,W_\eps) =
-\eps\, (\hMA^\eps \grad u_1^\eps,\grad W_\eps)_\Din + \eps\,k^2 (\hn^\eps u_1^\eps,W_\eps)_\Din.
\eeqs
and so to prove the result \eqref{eq_lemma_tu1}, we only need to prove that 
\beq\label{eq_48_tmp3}
-\eps \big(\hMA^\eps \grad u_1^\eps,\grad W_\eps\big)_\Din =
\big((\hMA\hMC)^\eps \grad u_0,\grad W_\eps\big)_{\Din}
- \eps \big((\hMA \gradx u_1)^\eps,\grad W_\eps\big)_{\Din}.
\eeq
By \eqref{eq_eps_diff} and \eqref{eq_grad_u1}, 
\begin{equation*}
\grad (u_1^\eps)
=
(\gradx u_1)^\eps + \frac{1}{\eps} (\grady u_1)^\eps
=
(\gradx u_1)^\eps - \frac{1}{\eps} \hMC^\eps \grad u_0;
\end{equation*}
therefore,
\begin{equation*}
\hMA^\eps \grad (u_1^\eps)
=
(\hMA\gradx u_1)^\eps - \frac{1}{\eps} (\hMA\hMC)^\eps \grad u_0,
\end{equation*}
and \eqref{eq_48_tmp3} (and hence \eqref{eq_lemma_tu1}) follows. 
\end{proof}

\begin{proof}[Proof of Lemma \ref{lemma_homogenization_with_theta}]
By the discussion at the start of the subsection, it is sufficient to prove
\eqref{tmp_duality_argument}. By the definitions of $W_\eps$ \eqref{eq_Weps}
and $\theta_\eps$ \eqref{eq_boundary_corrector}
\begin{equation}
\label{tmp_boundary_corrector}
\eps (\theta_\eps,\psi)
=
\eps \,b^\eps(\theta_\eps,W_\eps)
=
\eps \,\bout(\EE(u_1^\eps),W_\eps)
+
\eps ((\curl \qq^\eps) \cdot \nnu,W_\eps)_\Gamma.
\end{equation}
We now rewrite 
$(u_\eps - (u_0 + \eps (u_{1,\eps}-\theta_\eps)),\psi)_{B_R}$
using \eqref{eq_lemma_ueps_u0},
\eqref{eq_lemma_tu1}, and \eqref{tmp_boundary_corrector}, to obtain that
\begin{align*}
&(u_\eps - (u_0 + \eps (u_{1,\eps}-\theta_\eps)),\psi)_{B_R}\\
&=
-
k^2 \big((n^H -\hn^\eps) u_0,W_\eps\big)_\Din
-
\big((\hMA\hMC)^\eps\grad u_0,\grad W_\eps\big)_\Din
+
\eps \big( (\curlx \qq)^\eps,\grad W_\eps\big)_\Din
\\
&\quad-
\eps \big((\curl \qq^\eps) \cdot \nnu,W_\eps\big)_\Gamma
- \eps \,\bout(E(u_1^\eps),W_\eps) + \big((\hMA\hMC)^\eps \grad u_0,\grad W_\eps\big)_{\Din}
+k^2 \eps \big(\hn^\eps u_1^\eps,W_\eps\big)_{\Din}\\
&\quad - \eps \big((\hMA \gradx u_1)^\eps,\grad W_\eps\big)_{\Din} +\eps \,\bout(\EE(u_1^\eps),W_\eps) + \eps \big((\curl \qq^\eps) \cdot \nnu,W_\eps\big)_\Gamma
\\
&=
-
k^2 ((n^H -\hn^\eps) u_0,W_\eps)_\Din
+
\eps (\left (\curlx \qq\right )^\eps,\grad W_\eps)_\Din
-k^2 \eps (\hn^\eps u_1^\eps,W_\eps)_{\Din} + \eps ((\hMA \gradx u_1)^\eps,\grad W_\eps)_{\Din}.
\end{align*}
In particular, we see that the terms $\eps ((\curl \qq^\eps) \cdot \nnu,W_\eps)_\Gamma$ 
and $\eps\, \bout(\EE(u_1^\eps),W_\eps)$, present in the right-hand sides of
\eqref{eq_lemma_ueps_u0} and \eqref{eq_lemma_tu1}, respectively, have been removed thanks
to the definition of the boundary corrector $\theta_\eps$ \eqref{eq_boundary_corrector} and
its consequence \eqref{tmp_boundary_corrector}.

We now bound each of the four terms on the right-hand side. By \eqref{eq_L2_term}, 
\begin{equation*}
k^2\big|\big((\nH-\hn^\eps) u_0,W_\eps\big)_\Din\big|
\lesssim
k\eps \N{u_0}_{H^1_k(B_R)}\|W_\eps\|_{H^1_k(\Din)},
\end{equation*}
and by \eqref{eq_curl_qq_L2},
\begin{equation*}
\eps \big|\big( (\curlx \qq)^\eps,\grad W_\eps\big)_\Din\big|
\lesssim
\eps \N{u_0}_{H^2(B_R)} \|W_\eps\|_{H^1_k(\Din)}.
\end{equation*}
Using the fact that $\|\hn^\eps\|_{L^\infty(Y)} \lesssim 1$
(by Definition \ref{definition_oscillatory_coefficients}), 
the Cauchy--Schwarz inequality, and the bound \eqref{eq_u1_L2}, we have
\begin{equation*}
k^2 \eps \big|(\hn^\eps u_1^\eps,W_\eps)_{\Din}\big|
\lesssim
\eps \big(k\|u_1^\eps\|_{L^2(\Din)}\big)\big(k\|W_\eps\|_{L^2(\Din)}\big)
\lesssim
\eps\big(\N{u_0}_{H^2(B_R)}+k\N{u_0}_{H^1_k(B_R)}\big)\|W_\eps\|_{H^1_k(\Din)}.
\end{equation*}
Similarly, using that $\|\hMA\|_{L^\infty(Y)} \lesssim 1$
(again by Definition \ref{definition_oscillatory_coefficients}),
and the bound \eqref{eq_u1_L2}, we have
\begin{align*}
\eps |((\hMA \gradx u_1)^\eps,\grad W_\eps)_\Din|
&\leq
\eps \|\hMA^\eps\|_{L^\infty(\Din)}
\|(\gradx u_1)^\eps\|_{L^2(\Din)}|W_\eps|_{H^1(\Din)}\\
&\lesssim
\eps \big(\N{u_0}_{H^2(B_R)}+k\N{u_0}_{H^1_k(B_R)}\big) \|W_\eps\|_{H^1_k(\Din)}.
\end{align*}
Combining these bounds, we have
\begin{equation*}
|(u_\eps-(u_0+\eps(u_{1,\eps}-\theta_\eps)),\psi)_{B_R}|
\lesssim
\eps \big(\N{u_0}_{H^2(B_R)}+k\N{u_0}_{H^1_k(B_R)}\big) \|W_\eps\|_{1,k,\Din}
\end{equation*}
and then using \eqref{eq_stab_Weps} we obtain \eqref{tmp_duality_argument}.
\end{proof}

\subsection{Proof of Corollary \ref{corollary_stability}}

By Lemma \ref{lem:H1}, it is sufficient to prove that,
if $F(v)= \int_{B_R}f\, \overline{v}$, then
\begin{equation}
\label{eq_star_cor1}
\N{u_\eps}_{H^1_k(B_R)}
\lesssim
\frac{\CsHshort}{k}
\bigg[ 1 +\Clayer\Big( (k\eps)^{1/2} + k\eps\Big)\bigg]
\N{f}_{L^2(B_R)}.
\end{equation}
By the triangle inequality, Theorem \ref{main_theorem_homogenization}, and the definition
of $\CsHshort$ \eqref{eq_Csol_def}, 
\begin{align}
\nonumber
&\N{u_\eps}_{H^1_k(B_R)}
\leq
\N{u_\eps-u_0}_{H^1_k(B_R)} + \N{u_0}_{H^1_k(B_R)},
\\
\label{eq_star_cor2}
&\quad\leq
\frac{ \CsHshort}{k} \Cfournew\Clayer\Big( (k\eps)^{1/2} + k\eps\Big) \N{f}_{L^2(B_R)}
+
\eps\N{u_1^\eps}_{H^1_k(\Din)}
+
\frac{\CsHshort}{k} \N{f}_{L^2(B_R)},
\end{align}
The bound on $u_1^\eps$ \eqref{eq_u1_H1} and the bound \eqref{eq_u0_H2} imply that
\begin{equation}
\label{eq_star_cor3}
\eps\N{u_1^\eps}_{H^1_k(\Din)}
\lesssim
\eps \big( 1 + (k\eps)^{-1}\big)\Big(|u_0|_{H^2(\Din)} + k \N{u_0}_{H^1_k(B_R)}\Big)
\lesssim
\frac{\CsHshort}{k} \big(1 + k\eps\big)\N{f}_{L^2(B_R)};
\end{equation}
the bound  \eqref{eq_star_cor1} then follows by combining \eqref{eq_star_cor2} and \eqref{eq_star_cor3}.

\appendix

\section{Regularity of periodic cell problems}
\label{appendix_cell_problems}

The goal of this section is to show that the functions $\hchi_j$ and $\hmu$
appearing in the homogenization section have the required regularity, namely
that $\hchi_j \in H^{1+s}(Y) \cap C^1(\PY)$, for some $s>0$, and that $\hmu \in C^1(\PY)$.
Recall that $H^1_\sharp(Y)$ is the subspace of $H^1_\per(Y)$ consisting of
functions with zero mean.

\begin{lemma}
\label{lemma_cell_problems}
Given $f \in \left (H^1(Y)\right )'$ with $\langle f, 1 \rangle = 0$,
there exists a unique element $u \in H^1_\sharp(Y)$ such that, in a distributional sense,
\begin{equation}
\label{eq_poisson_Y}
-\div \left (\hMA \grad u \right ) = f \text{ in } Y.
\end{equation}
Furthermore, if $\hMA(\by)$ does not depend on $y_\ell$ for some $\ell \in \{1,\dots,d\}$ and
\begin{equation}
\label{eq:appB2}
\langle f, v \rangle = \int_Y f_0 v + \sum_{j=1}^d \int_Y f_j \frac{\partial v}{\partial y_j}
\quad
\tfa v \in H^1(Y)
\end{equation}
where $f_0,f_j \in L^2(Y)$ are functions that are independent of $y_\ell$, then
$u$ does not depend on $y_\ell$.
\end{lemma}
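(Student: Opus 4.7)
The plan is to use Lax--Milgram for existence and uniqueness, and then a translation-invariance argument for the $y_\ell$-independence. For the first statement, I would consider the variational problem of finding $u \in H^1_\sharp(Y)$ such that
\[
a(u,v) := \int_Y \hMA \grady u \cdot \overline{\grady v} = \langle f, v \rangle \quad \tfa v \in H^1_\sharp(Y).
\]
Continuity of $a$ on $H^1_\sharp(Y) \times H^1_\sharp(Y)$ is immediate from $\hMA \in L^\infty(Y,\SPD)$, and continuity of the right-hand side follows from $f \in (H^1(Y))'$. Coercivity of $a$ follows from $\hMA_{\min} \geq 1$ combined with the Poincar\'e--Wirtinger inequality $\|v\|_{L^2(Y)} \lesssim \|\grady v\|_{L^2(Y)}$ on $H^1_\sharp(Y)$ (a standard result for zero-mean periodic functions). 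Lax--Milgram therefore yields existence and uniqueness of $u$.

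To check that $u$ satisfies \eqref{eq_poisson_Y} in the distributional sense, I would take any $\varphi \in C_c^\infty(Y)$, view it as an element of $C^\infty_\per(Y)$ via periodic extension (which is smooth because $\varphi$ vanishes in a neighbourhood of $\partial Y$), and set $\bar\varphi := \int_Y \varphi$ and $\tilde\varphi := \varphi - \bar\varphi \in H^1_\sharp(Y)$. The variational formulation gives $a(u,\tilde\varphi) = \langle f,\tilde\varphi\rangle$; since $\grady \bar\varphi = 0$ and $\langle f, 1 \rangle = 0$, this simplifies to $a(u,\varphi) = \langle f,\varphi\rangle$, which is exactly the claimed distributional equation in $Y$.

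For the second statement, assume $\hMA$, $f_0$, and $f_j$ are independent of $y_\ell$. For each $h \in \Rea$, define $u_h(\by) := u(\by + h \be_\ell)$, where the shift is understood modulo $1$ in the $y_\ell$ direction, so that $u_h \in H^1_\sharp(Y)$ (both periodicity and the zero-mean property are preserved by translation on the torus). A change of variables $\by' = \by + h \be_\ell$ in the variational formulation, combined with the fact that $\hMA$, $f_0$, and $f_j$ are invariant under this shift, shows that $u_h$ satisfies the same variational problem as $u$. By the uniqueness just established, $u_h = u$ for every $h \in \Rea$, and so $u$ is independent of $y_\ell$.

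The only real subtlety is the distributional interpretation on $Y$: since constants are in the kernel of $a$, the compatibility condition $\langle f, 1 \rangle = 0$ is needed both to make the variational problem solvable and to recover the equation against general compactly-supported test functions. Once the first part is in hand, the second is a clean translation-invariance argument, and no further regularity of $\hMA$, $f_0$, or $f_j$ is required beyond the $L^\infty$/$L^2$ hypotheses of the lemma.
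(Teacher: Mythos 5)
Your proposal is correct and follows essentially the same route as the paper: Lax--Milgram on $H^1_\sharp(Y)$ with the Poincar\'e--Wirtinger inequality supplying coercivity, subtraction of the mean from a $C^\infty_c(Y)$ test function to recover the distributional PDE from the variational identity, and uniqueness combined with translation-invariance of the variational problem to obtain $y_\ell$-independence. The only cosmetic difference is that you package the last step as a change of variables applied to $u_h := u(\cdot + h\be_\ell)$, whereas the paper writes out the $L^2$-pairing manipulations for $\tau_{h,\ell}u$ directly, but these are the same calculation.
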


\begin{proof}
Given $f \in \left (H^1(Y)\right )'$, by the Poincar\'e-Wirtinger inequality and the Lax-Milgram
lemma applied in the space $H^1_\sharp(Y)$, there exists a unique $u \in H^1_\sharp(Y)$ such that
\begin{equation}\label{eq:A1}
\big(\hMA \grad u,\grad v\big)_Y = \big\langle f, v \big\rangle \quad \tfa v \in H^1_\sharp(Y).
\end{equation}
Now let $\phi \in C^\infty_c(Y)$, with $\phi_0 \in \mathbb C$ the mean value
of $\phi$. Then $\widetilde \phi = \phi-\phi_0 \in H^1_\sharp(Y)$, so that, by  \eqref{eq:A1} and the fact that 
$\langle f, \phi_0 \rangle = 0$,
\begin{equation*}
\big(\hMA \grad u,\grad \phi\big)_Y
=
\big(\hMA \grad u,\grad \widetilde \phi\big)_Y
=
\big\langle f, \widetilde \phi \big\rangle
=
\big\langle f, \phi \big\rangle;
\end{equation*}
the result \eqref{eq_poisson_Y} follows.
Since functions in $H^1_\sharp(Y)$ can be periodically extended, the translation
operation $\tau_{h,\ell}: v(\by) \to v(\by+h \be_\ell)$ is well-defined.
Then, with the summation convention,
\begin{align*}
\big(\hMA \grad (\tau_{h,\ell}u),\grad v\big)_Y
=
\big(\hMA \grad u,\grad (\tau_{-h,\ell}v)\big)_Y
&=
\big(f_0,\tau_{-h,\ell}v\big)_Y + \big(f_j,\partial_j (\tau_{-h,\ell}v)\big)_Y
\\
&=
\big(\tau_{h,\ell}f_0,v\big)_Y + \big(\tau_{h,\ell}f_j,\partial_jv\big)_Y
=
\big(f_0,v\big)_Y + \big(f_j,\partial_jv\big)_Y,
\end{align*}
so that $u = \tau_{h,\ell} u$ for all $h \in \mathbb R$; therefore $u$
does not depend on $y_\ell$.
\end{proof}

\begin{lemma}
\label{lemma_condition_patterns}
If Condition \ref{condition_periodic_patterns} holds, then $\hchi_j$, $j=1,2,3,$
defined by \eqref{eq_hchi_def} exists, is unique, and satisfies
$\hchi_j \in H^{1+s}(Y) \cap C^1(\PY)$, for some $s>0$, and $\hmu$ defined by \eqref{eq_mu}
exists, is unique, and satisfies $\hmu \in C^1(\PY)$. Furthermore,
if $\hMA$ and $\hn$ do not depend on $y_{\ell}$, then neither do $\hchi_j$ and $\hmu$.
\end{lemma}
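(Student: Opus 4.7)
The plan is to establish existence, uniqueness, and the $y_\ell$-independence via Lemma \ref{lemma_cell_problems}, then derive the regularity statement by treating cases (a) and (b) of Condition \ref{condition_periodic_patterns} separately.

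First I would verify the hypotheses of Lemma \ref{lemma_cell_problems} for the two cell problems. For \eqref{eq_hchi_def}, the right-hand side is the distributional divergence $-\partial_{y_\ell}(\hMA)_{j\ell}$, which fits the form \eqref{eq:appB2} with $f_0 = 0$ and $f_\ell = (\hMA)_{j\ell} \in L^\infty(Y) \subset L^2(Y)$, and has zero integral over $Y$ as an integral of a periodic derivative. For \eqref{eq_mu}, the right-hand side $\hn - \nH \in L^\infty(Y)$ fits \eqref{eq:appB2} with $f_0 = \hn - \nH$ and $f_j = 0$, and has zero mean by the definition \eqref{eq_def_nH} of $\nH$. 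Lemma \ref{lemma_cell_problems} thus yields existence and uniqueness of $\hchi_j$ and $\hmu$ in $H^1_\sharp(Y)$. Since $\hMA$ and $\hn$ are independent of $y_d$ by Condition \ref{condition_periodic_patterns}, the second part of Lemma \ref{lemma_cell_problems} gives the corresponding independence of $\hchi_j$ and $\hmu$, and the same reasoning proves the general $y_\ell$-independence claim at the end of the statement.

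For the regularity in case (a), by the independence just established, $\hchi_j$ and $\hmu$ depend only on $y_1$ (or on $y_2$ when $d=3$), and \eqref{eq_hchi_def} reduces to the one-dimensional ODE $-\bigl((\hMA)_{11}\,\hchi_j'\bigr)' = -(\hMA)_{j1}'$ understood distributionally on the periodic line. Integrating once and using periodicity gives $\hchi_j' = \bigl((\hMA)_{j1} + c\bigr)/(\hMA)_{11}$ for a unique constant $c$; since $\hMA$ is piecewise $C^{0,1}$ with $(\hMA)_{11}\geq \hMA_{\min} \geq 1$, this yields $\hchi_j \in W^{1,\infty}(Y) \cap C^1(\PY)$. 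The global $H^{1+s}(Y)$ regularity (for any $s < 1/2$) then follows from the fact that a continuous, piecewise-$H^2$ function on a $1$D torus with finitely many jumps in its derivative lies in $H^{3/2 - \eps}$ for every $\eps > 0$. The argument for $\hmu$ is analogous.

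For the regularity in case (b), I would invoke standard elliptic transmission regularity. On the interior of each open piece of the partition, the PDE rewritten in non-divergence form has $C^{0,1}$ coefficients (since $\hMA$ is piecewise $C^{1,1}$) and $L^\infty$ (respectively $H^1$) right-hand side for $\hchi_j$ (respectively $\hmu$); interior $W^{2,p}$ theory for every $p<\infty$ gives piecewise $W^{2,p}_{\mathrm{loc}}$ regularity, hence piecewise $C^{1,\alpha}_{\mathrm{loc}}$ for $d\le 3$. Because the interfaces are $C^{2,1}$, a flattening argument combined with the elliptic transmission-regularity theory (see, e.g., the discussion and references around \cite[Proposition 2.11]{LiRoXi:19}) upgrades this to piecewise $C^{1,\alpha}$ up to each interface, yielding the $C^1(\PY)$ conclusion. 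Finally, since $\hchi_j$ is globally continuous (hence in $H^1(Y)$) and piecewise $H^2$ across $C^{2,1}$ interfaces, a standard trace/interpolation argument gives $\hchi_j \in H^{1+s}(Y)$ for some $s > 0$ (in fact any $s < 1/2$). The main technical obstacle is carrying the interior $W^{2,p}$ bound across the transmission interface in case (b); this is precisely where the $C^{2,1}$ smoothness of the interfaces and the transmission-regularity machinery are essential.
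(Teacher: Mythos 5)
Your proof is correct and follows the same overall skeleton as the paper (hypotheses of Lemma \ref{lemma_cell_problems}, then regularity split by cases (a) and (b)), but differs in how case (a) is handled and in the toolkit for case (b). In case (a) you integrate the 1D ODE once to obtain the explicit formula $\hchi_j' = \bigl((\hMA)_{j1} + c\bigr)/(\hMA)_{11}$ and read off $W^{1,\infty}\cap C^1(\PY)$ directly from $(\hMA)_{11}\geq 1$; this is more elementary and concrete than the paper, which simply invokes 1D elliptic regularity to get piecewise $H^2$ and then Sobolev embedding. (Incidentally, your right-hand side $-(\hMA)_{j1}'$ is the correct reduction of \eqref{eq_hchi_def}; the paper's displayed $-(\hMA)_{11}'$ appears to be a typo.) In case (b) you use interior $W^{2,p}$/Schauder theory for the non-divergence form plus a transmission-regularity argument near the interfaces, whereas the paper explicitly integrates by parts piecewise to reveal the interface jump terms $[\![(\hMA)_{j\ell}]\!]\nnu_{\Gamma,\ell}$ and then applies an $H^3$-shift; both approaches leave the up-to-interface step to standard references, so you are at a comparable level of rigor, though the paper's explicit identification of the jump terms makes it clearer what regularity of $\Gamma$ and of $\hMA$ is being consumed. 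One wording to sharpen: the final passage from ``piecewise $H^{s}$ gradient plus boundedness'' to ``globally $H^{1+s}(Y)$'' is not really a trace/interpolation argument; it is the elementary observation (used by the paper via the Slobodeckij seminorm) that a bounded function that is piecewise $H^s$ with $0<s<1/2$ on a finite partition with Lipschitz interfaces is globally $H^s$, since the cross-interface part of the double integral $\iint |f(x)-f(y)|^2/|x-y|^{d+2s}$ converges for $s<1/2$.
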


\begin{proof}
We first claim that both $\hchi_j$ and $\mu$ satisfy equations of the form  \eqref{eq_poisson_Y},
\eqref{eq:appB2} with $\langle f,1\rangle=0$. For $\mu$ this is immediate from \eqref{eq_mu} and
the fact that, by the definition of $\nH$ \eqref{eq_def_nH}, $(\nH-\hn,1)_Y = 0$. For $\hchi_j$,
observe that, by its definition \eqref{eq_hchi_def}, for all $v\in H^1(Y)$,
\begin{equation*}
-\left\langle \frac{\partial (\hMA)_{j \ell}}{\partial y_\ell},v \right\rangle
=
\left((\hMA)_{j \ell},\frac{\partial v}{\partial y_\ell}\right)_Y, 
\qquad
\text{ so that }
-\left\langle \frac{\partial (\hMA)_{j \ell}}{\partial y_\ell},1 \right\rangle = 0.
\end{equation*}
Therefore, by Lemma \ref{lemma_cell_problems}, both $\hchi_j$ defined by \eqref{eq_hchi_def} 
and $\mu$ defined by \eqref{eq_mu} exist and are unique. The lemma also implies
that if $\hMA$ and $\hn$ do not depend on $y_{\ell}$, then neither do $\hchi_j$ and $\hmu$.

We now give the details of the proof of the regularity for $\hchi_j$;
the proof of the regularity of $\hmu$ is very similar. Our goal is to use
\ben
\item that, for functions depending on $r$ variables, then, if $t>0$, there exists $C>0$ such that 
\beq\label{eq_Sobolev}
\N{v}_{C^{0,t}(Y)}\lesssim \N{v}_{H^{r/2+t}(Y)}
\eeq
i.e., Sobolev embedding (see, e.g., \cite[Theorem 3.26]{Mc:00}), and 
\item an elliptic regularity shift.
\een
Regarding 2.: such a shift is well-known, at least in the non-periodic case.
To go from the periodic case to the non-periodic case, we extend the functions $u$,
$f_0$ and $f_j$ in \eqref{eq_poisson_Y} and \eqref{eq:appB2} by periodicity and multiply
by a cutoff function, and obtain a problem posed in a bounded domain in
$\Omega \subset \mathbb R^d$ with $Y \subset \subset \Omega$. 
We can then just use the standard interior regularity shift result; see, e.g., \cite[Theorem 4.20]{Mc:00}.

We need to check that \emph{both} the coefficient $\hMA$ of the PDE \eqref{eq_hchi_def} \emph{and} the right-hand side
$-\partial (\hMA)_{j \ell}/\partial y_\ell$ have sufficient regularity under
either (a) or (b) in Condition \ref{condition_periodic_patterns}.

If (a) holds, then both $\hMA$ and $\hchi_j$ only depend on $y_1$, so that \eqref{eq_hchi_def} becomes
\begin{equation*}
-\frac{\rd}{\rd y_1} \left ((\hMA)_{11}\frac{\rd \hchi_j}{\rd y_1}\right ) = -\frac{\rd (\hMA)_{11}}{\rd y_1}.
\end{equation*}
Therefore, if $\hMA$ is piecewise $C^{0,1}$, then $\rd(\hMA)_{11}/\rd y_1$ is
piecewise $L^2$, and $\hchi_j$ is piecewise $H^2$ by elliptic regularity (see, e.g., \cite[Theorem 4.20]{Mc:00}). 
By \eqref{eq_Sobolev}, $\hchi_j$ is $C^0$ and piecewise $C^1$.
By the definition of the Slobodeckii seminorm, if $0\leq s< 1/2$, then a function that is locally $H^s$ is in $H^s(Y)$. 
Therefore, $\grad\hchi_j \in H^s(Y)$ for $s< 1/2$; thus $\chi_j \in H^{1+s}(Y)$ for $s<1/2$.

Finally, for (b), 
by integrating by parts piecewise  the right-hand side of the variational problem defining $\hchi_j$, we find that, for all $v\in H^1_\sharp(Y)$,
\begin{align*}
(\hMA \grad \hchi_j,\grad v)_{Y}
=
\sum_{\omega \in \PY}
\left ((\hMA)_{j\ell},\frac{\partial v}{\partial y_\ell}\right )_{\omega}
&=
\sum_{\omega \in \PY}
\left \{
\left ((\hMA)_{j\ell} \nnu_{\omega,\ell},v\right )_{\partial \omega}
-
\left (\frac{\partial (\hMA)_{j\ell}}{\partial y_\ell},v\right )_{\omega}
\right \}
\\
&=
\sum_{\Gamma \in \partial\PY}
\left ([\![(\hMA)_{j\ell}]\!] \nnu_{\Gamma,\ell},v\right )_{\Gamma}
-
\sum_{\omega \in \PY}
\left (\frac{\partial (\hMA)_{j\ell}}{\partial y_\ell},v\right )_{\omega}
\end{align*}
where $\partial \PY$ denotes the set of interfaces of the partition, and $[\![\cdot ]\!]$ denotes the jump of a function across an interface $\Gamma$.
By elliptic regularity (see, e.g. \cite[Theorem 4.20]{Mc:00}), since each $\Gamma$ is $C^{2,1}$, the periodic extension of $\hMA$ is piecewise $C^{1,1}$ and hence piecewise $H^2$, we have
\begin{equation*}
\|\hchi_j\|_{H^3(\PY)}
\lesssim
\left \|\frac{\partial (\hMA)_{j\ell}}{\partial y_\ell}\right \|_{H^1(\PY)}
+
\sum_{\Gamma \in \partial \PY} \|[\![(\hMA)_{j\ell}]\!] \cdot \nnu_{\ell,\Gamma}\|_{H^{3/2}(\Gamma)}
\lesssim
\|\hMA\|_{H^2(\PY)},
\end{equation*}
by continuity of the trace map $H^2(D)\to H^{3/2}(D)$ for a $C^{1,1}$ domain $D$ (see, e.g., \cite[Theorem 3.37]{Mc:00}).
By \eqref{eq_Sobolev} with $r=2$, $\grad\hchi_j \in C^{0,t}(\PY)$ for all $0<t<1$, and thus certainly $\hchi_j \in C^1(\PY)$.
Finally, exactly as in (a), since $\grad\hchi_j$ is locally $H^s$ for $s\leq 2$, it is in $H^s(Y)$ for $s< 1/2$; thus $\chi_j \in H^{1+s}(Y)$ for $s<1/2$.
\end{proof}

\section{Periodic vector potentials}
\label{appendix_vector_potentials}

The goal of this appendix is to prove Theorem \ref{theorem_vector_potential_main} below,
from which Lemma \ref{lemma_vector_potential} follows.

\subsection{Periodic Sobolev spaces}

In this section $\CC^\infty(\RRR^d)$ is the space of smooth complex vector-valued functions;
i.e., $\CC^\infty(\RRR^d) \eq C^\infty(\Rea^d;\Com^d)$. $Y = (0,1)^d$ is the $d$-dimensional unit
cube. The space $\CC^\infty_\per(Y)$ consists of those functions in $\CC^\infty(\RRR^d)$ that
are $Y$-periodic, i.e. $\pphi \in \CC^\infty(\RRR^d)$ and $\pphi(\yy + \ee_j) = \pphi(\yy)$
for all $\yy \in \RRR^d$, where, for $1 \leq j \leq d$, $\ee_j$ denotes the $j^{\rm th}$ vector
in the canonical basis of $\RRR^d$. For $\pphi \in \CC_\per^\infty(Y)$, 
\begin{align*}
&\|\pphi\|_{\ddiv,Y}^2   \eq \|\pphi\|_{\LL^2(Y)}^2 + \|\divy \pphi\|_{\LL^2(Y)}^2, \qquad
\|\pphi\|_{\ccurl,Y}^2  \eq \|\pphi\|_{\LL^2(Y)}^2 + \|\curly \pphi\|_{\LL^2(Y)}^2,
\\
&\hspace{3.5cm}\|\pphi\|_{\HH^{1}(Y)}^2       \eq \|\pphi\|_{\LL^2(Y)}^2 + \|\grady \pphi\|_{\LL^2(Y)}^2,
\end{align*}
and $\HH_\per(\ddiv,Y)$, $\HH_\per(\ccurl,Y)$
and $\HH^1_\per(Y)$ are the closures of $\CC^\infty_\per(Y)$ in $\LL^2(Y)$ with respect to the $\|\cdot\|_{\ddiv,Y}$, $\|\cdot\|_{\ccurl,Y}$
and $\|\cdot\|_{\HH^{1}(Y)}$ norms, respectively.

The spaces $\CC^\infty_\sharp(Y)$, $\HH_\sharp(\ddiv,Y)$,
$\HH_\sharp(\ccurl,Y)$ and $\HH^1_\sharp(Y)$ are the subspaces of 
$\CC^\infty_\per(Y)$, $\HH_\per(\ddiv,Y)$, $\HH_\per(\ccurl,Y)$ and $\HH^1_\per(Y)$, respectively,
consisting of functions with zero mean. Since the map  $\LL^2(Y) \rightarrow  \mathbb C^d$ defined by 
$\pphi \mapsto \frac{1}{|Y|}\int_Y \pphi $
is bounded, $\CC^\infty_\sharp(Y)$ is dense in $\HH_\sharp(\ddiv,Y)$, $\HH_\sharp(\ccurl,Y)$,
and $\HH^1_\sharp(Y)$. 

\subsection{Fourier series}\label{sec:FS}

For $\al \in \ZZZ^d$, let
\begin{equation*}
|\al|^2 \eq \sum_{j=1}^d \alpha_j^2
\quad \tand \quad
\re^\al(\yy) \eq \re^{2\pi \ri\al \cdot \yy} \quad \tfa \yy \in \RRR^d.
\end{equation*}
For $\pphi \in \LL^2_\per(Y)$, we  define $\pphi^\al \in \ZZZ^d$ by
\begin{equation*}
(\pphi^\al)_j \eq \big(\phi_j,\re^\al\big)_{\LL^2(Y)}\quad\tfa \al \in \ZZZ^d.
\end{equation*}
Then, by the $\LL^2$ theory of Fourier series (see, e.g., \cite[\S4.26, Page 91]{Ru:86}
for the results in one dimension, and, e.g., \cite[Prop.~3.1.15 and 3.1.16, Page 170]{Gr:08}
for the results in arbitrary dimensions) $\sum_{\al \in \ZZZ^d} |\pphi^\al|^2 <\infty$ and 
\begin{equation*}
\pphi = \sum_{\al \in \ZZZ^d} \pphi^\al \re^\al,
\end{equation*}
where the sum converges in $\LL^2(Y)$.
Conversely, if $\{\tilde{\ppsi}^\al\}_{\al \in \ZZZ^d}$ is a set of coefficients
satisfying $\sum_{\al} |\tilde{\ppsi}^\al|^2 <\infty$ then there exists a unique $\ppsi\in \LL^2(Y)$ such that $\ppsi^\al= \tilde{\ppsi}^\al$ and 
\begin{equation*}
\ppsi \eq \sum_{\al \in \ZZZ^d} \ppsi^\al \re^\al,
\end{equation*}
where the sum converges in $\LL^2(Y)$.
If $\pphi \in \HH_\per(\ddiv,Y)$ then, by the divergence theorem (see, e.g., \cite[Part 2 of Theorem 3.24]{Mo:03})
\begin{equation}\label{eq_Fourier_div}
(\divy \pphi)^\al  = 2 \pi \ri \,\al \cdot \pphi^\al \quad\text{ so that }\quad
\divy  \pphi = 2 \pi \ri  \sum_{\al \in \ZZZ^d} (\al \cdot  \pphi^\al) \re^\al.
\eeq
Similarly if $\pphi\in \HH_\per(\ccurl,Y)$ then integration by parts (see, e.g., \cite[Theorem 3.29]{Mo:03}) gives 
\begin{equation}\label{eq_Fourier_curl}
(\curly \pphi)^\al = 2 \pi \ri \,\al \times \pphi^\al\quad\text{ so that }\quad
\curly \pphi = 2 \pi \ri  \sum_{\al \in \ZZZ^d} (\al \times \pphi^\al) \re^\al.
\end{equation}

\subsection{Sobolev norms defined by Fourier coefficients}

For $s \in \mathbb R_+$, we define the Sobolev semi-norms on vector- and scalar-valued functions by
\begin{equation}
\label{eq_fourier_sobolev_norm}
|\pphi|_{\HH^s(Y)}^2 := (2\pi)^s \sum_{\al \in \ZZZ^d} |\al|^{2s} |\pphi^\al|^2
\quad\tand\quad
|\phi|_{H^s(Y)}^2 := (2\pi)^s \sum_{\al \in \ZZZ^d} |\al|^{2s} |\phi^\al|^2;
\end{equation}
where $|\cdot|_2$ denotes the Euclidean norm on $\Com^d$ (and the modulus on $\Com$);
observe that then
\beq
\label{eq_norm_scalar_vector}
|\pphi|^2_{\HH^s(Y)} = \sum_{j=1}^d |\phi_j|^2_{H^s(Y)}
\eeq
Let 
\beq
\N{\pphi}^2_{\HH^m(Y)} := \sum_{s=0}^m |\pphi|^2_{\HH^s(Y)},
\text{ and } 
\label{eq_norm_not_int}
\N{\pphi}^2_{\HH^t(Y)} := \sum_{s=0}^{\lfloor t \rfloor} |\pphi|^2_{\HH^s(Y)} + |\pphi|^2_{\HH^t(Y)},
\eeq
for $m \in \ZZZ^+$ and $t \in \Rea^+\setminus \ZZZ^+$,
and similarly for scalar-valued functions. We use below the particular consequence of \eqref{eq_norm_scalar_vector} and \eqref{eq_norm_not_int} that
\beq\label{eq_norm_scalar_vector2}
\N{\pphi}^2_{\HH^{1+s}(Y)} =\sum_{j=1}^d \N{\phi_j}^2_{H^{1+s}(Y)}.
\eeq

\begin{lemma}
\label{lem:A1}
Let $0\leq s \leq 1$. If $\pphi \in \HH_\per(\ddiv,Y)$ with $\divy \pphi = 0$
and $\curly \pphi \in \HH^{s}(Y)$, then $\pphi \in \HH^{1+s}(Y)$ with
\begin{equation}
\label{eq_Hs_divy_free}
|\pphi|_{\HH^{1+s}(Y)}
\leq
|\curly \pphi|_{\HH^s(Y)}.
\end{equation}
\end{lemma}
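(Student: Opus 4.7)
The plan is to work entirely in Fourier series, exploiting the fact that the divergence-free condition makes each Fourier coefficient $\pphi^\alpha$ orthogonal to $\alpha$, which in turn converts the curl into an essentially scalar multiplication by $|\alpha|$. Concretely, I would first write $\pphi = \sum_{\alpha \in \ZZZ^d} \pphi^\alpha \re^\alpha$ as in \S\ref{sec:FS}. Since $\pphi \in \HH_\per(\ddiv,Y)$, formula \eqref{eq_Fourier_div} applies, and the hypothesis $\divy \pphi = 0$ yields $\alpha \cdot \pphi^\alpha = 0$ for every $\alpha \in \ZZZ^d$. This orthogonality is the single crucial ingredient.

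The second step is the pointwise (in $\alpha$) identity. For any $\bv \in \Com^d$ and $\alpha \in \ZZZ^d$, the standard vector identity gives
\begin{equation*}
|\alpha \times \bv|^2 = |\alpha|^2 |\bv|^2 - |\alpha \cdot \bv|^2.
\end{equation*}
Applied to $\bv = \pphi^\alpha$, the divergence-free condition reduces this to $|\alpha \times \pphi^\alpha|^2 = |\alpha|^2 |\pphi^\alpha|^2$. Combining with \eqref{eq_Fourier_curl} then gives
\begin{equation*}
|(\curly \pphi)^\alpha|^2 = (2\pi)^2 \,|\alpha|^2\, |\pphi^\alpha|^2 \quad \tfa \alpha \in \ZZZ^d.
\end{equation*}

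The final step is to substitute this relation into the Fourier definition \eqref{eq_fourier_sobolev_norm} of both semi-norms:
\begin{equation*}
|\curly \pphi|_{\HH^s(Y)}^2
= (2\pi)^s \sum_{\alpha \in \ZZZ^d} |\alpha|^{2s} |(\curly \pphi)^\alpha|^2
= (2\pi)^{s+2} \sum_{\alpha \in \ZZZ^d} |\alpha|^{2s+2} |\pphi^\alpha|^2,
\end{equation*}
while
\begin{equation*}
|\pphi|_{\HH^{1+s}(Y)}^2
= (2\pi)^{1+s} \sum_{\alpha \in \ZZZ^d} |\alpha|^{2(1+s)} |\pphi^\alpha|^2.
\end{equation*}
Comparing gives $|\pphi|_{\HH^{1+s}(Y)}^2 = (2\pi)^{-1} |\curly \pphi|_{\HH^s(Y)}^2 \leq |\curly \pphi|_{\HH^s(Y)}^2$, which is \eqref{eq_Hs_divy_free}. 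The finiteness of this quantity, together with $\pphi \in \LL^2(Y)$, yields $\pphi \in \HH^{1+s}(Y)$ via \eqref{eq_norm_scalar_vector2}.

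I do not anticipate a real obstacle: all the ingredients needed (the Fourier characterizations \eqref{eq_Fourier_div} and \eqref{eq_Fourier_curl}, and the Fourier definition \eqref{eq_fourier_sobolev_norm} valid for arbitrary $s \geq 0$) are already in place, and the restriction $0 \leq s \leq 1$ in the statement is not actually used by this argument -- the Fourier proof would give the same bound for any $s \geq 0$. The only place minor care is needed is to justify that the condition $\sum_\alpha |\alpha|^{2(1+s)} |\pphi^\alpha|^2 < \infty$ actually places $\pphi$ in $\HH^{1+s}(Y)$ (as opposed to just giving finite semi-norm), which follows from the converse direction of the Fourier series result recalled in \S\ref{sec:FS}.
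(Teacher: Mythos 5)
Your proof is correct and follows essentially the same Fourier-series route as the paper: both arguments reduce to the observation that the divergence-free condition forces $\al\cdot\pphi^\al=0$ for every mode, which converts the curl into multiplication by $2\pi|\al|$. The only cosmetic difference is that you invoke the Lagrange identity $|\al\times\bv|^2=|\al|^2|\bv|^2-|\al\cdot\bv|^2$ directly (yielding the equality $|(\curly\pphi)^\al|=2\pi|\al||\pphi^\al|$), whereas the paper uses the triple-product decomposition $|\al|^2\vv=(\al\cdot\vv)\al+(\al\times\vv)\times\al$ and then the generic bound $|(\curly\pphi)^\al\times\al|\le|(\curly\pphi)^\al||\al|$ -- which is actually also an equality here since $(\curly\pphi)^\al\perp\al$ -- so the two routes are the same computation. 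Your parenthetical remark that the restriction $0\le s\le 1$ is not actually used is also correct.
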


An immediate consequence of Lemma \ref{lem:A1} is that
\beq\label{eq_H1}
\text{ if } \,\,\pphi \in \HH_\per(\ddiv, Y) \cap \HH_\per(\ccurl, Y) \,\,\text{  then  }\,\, \pphi \in \HH^1_\per(Y).
\eeq

\begin{proof}[Proof of Lemma \ref{lem:A1}]
The identity
\begin{equation*}
|\al|^2 \vv = (\al \cdot \vv) \al + (\al \times \vv) \times \al,
\end{equation*}
holds for all $\al \in \ZZZ^d$ and $\vv \in \CCC^d$.
Using this with $\vv = \pphi^\al$, along with \eqref{eq_Fourier_div} and \eqref{eq_Fourier_curl}, we have
\begin{align*}
2 \pi \ri |\al|^2 \pphi^\al
=
2 \pi \ri (\al \cdot \pphi^\al) \al + 2 \pi \ri (\al \times \pphi^\al) \times \al
=
(\divy \pphi)^\al \al + (\curly \pphi)^\al \times \al
=
(\curly \pphi)^\al \times \al,
\end{align*}
since $\divy \pphi = 0$ by assumption. Therefore
\begin{equation*}
2\pi |\al| |\pphi^\al|
=
\frac{|(\curly \pphi)^\al \times \al|}{|\al|}
\leq
|(\curly \pphi)^\al|.
\end{equation*}
Recalling the definition \eqref{eq_fourier_sobolev_norm}, we have
\begin{align*}
|\pphi|_{\HH^{1+s}(Y)}^2
=
(2\pi)^{2(1+s)}
\sum_{\al \in \ZZZ^d} |\al|^{2(1+s)}|\pphi^\al|^2
&=
(2\pi)^{2s} \sum_{\al \in \ZZZ^d} |\al|^{2s} (2\pi|\al||\pphi^\al|)^2\\
&\leq
(2\pi)^{2s} \sum_{\al \in \ZZZ^d} |\al|^{2s} |(\curly \pphi)^\al|^2
=
|\curly \pphi|_{\HH^s(Y)}^2.
\end{align*}
\end{proof}

\subsection{Univariate vector potentials}

\begin{lemma}
\label{lemma_smooth_vector_potential}
For all $\pphi \in \HH_{\sharp}(\ddiv,Y)$ with $\divy \pphi = 0$,
there exists a unique $\qq \in \HH^1_\sharp(Y)$ such that
\begin{equation}
\label{eq_vector_potential}
\curly \qq = \pphi \quad \tand \quad \divy \qq = 0.
\end{equation}
In addition,
\begin{equation}
\label{eq_stability_vector_potential}
\|\qq\|_{\HH^{1}(Y)} \leq \left (1 + \pi^{-1}\right ) \|\pphi\|_{\LL^2(Y)}.
\end{equation}
\end{lemma}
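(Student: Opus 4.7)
My plan is to construct $\qq$ explicitly via Fourier series on the unit cube, exploiting the Fourier-analytic framework developed in Sections \ref{sec:FS} and following. Since $\pphi \in \HH_\sharp(\ddiv,Y)$ with zero mean and $\divy\pphi=0$, its Fourier coefficients satisfy $\pphi^0=0$ and, by \eqref{eq_Fourier_div}, $\al\cdot\pphi^\al=0$ for every $\al\in\ZZZ^d$. Motivated by the vector identity $|\al|^2\vv = (\al\cdot\vv)\al + (\al\times\vv)\times\al$, I will define
\beqs
\qq^0 := 0,
\qquad
\qq^\al := \frac{\ri}{2\pi |\al|^2}\,\al\times\pphi^\al
\quad \tfor \al\in\ZZZ^d\setminus\{0\},
\eeqs
and set $\qq := \sum_{\al} \qq^\al \,\re^\al$. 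The first step is to check that this series converges in $\LL^2(Y)$: since $|\qq^\al|\le |\pphi^\al|/(2\pi|\al|)$, we have $\sum |\qq^\al|^2 \le (2\pi)^{-2}\sum |\pphi^\al|^2 <\infty$, and in fact this immediately yields the Poincar\'e-type bound
\beqs
\|\qq\|_{\LL^2(Y)} \le \frac{1}{2\pi}\|\pphi\|_{\LL^2(Y)}.
\eeqs

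Next I would verify the two conditions in \eqref{eq_vector_potential}. Using \eqref{eq_Fourier_div}, $(\divy\qq)^\al = 2\pi\ri\,\al\cdot\qq^\al = -|\al|^{-2}\,\al\cdot(\al\times\pphi^\al)=0$, so $\divy\qq=0$. Using \eqref{eq_Fourier_curl} and the identity $\al\times(\al\times\pphi^\al) = (\al\cdot\pphi^\al)\al - |\al|^2\pphi^\al = -|\al|^2 \pphi^\al$ (where the first term vanishes because $\pphi$ is divergence free), we obtain
\beqs
(\curly \qq)^\al = 2\pi\ri\,\al\times\qq^\al = -|\al|^{-2}\,\al\times(\al\times\pphi^\al) = \pphi^\al,
\eeqs
so $\curly\qq = \pphi$ in $\LL^2(Y)$. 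In particular $\qq \in \HH_\sharp(\ccurl,Y)\cap\HH_\sharp(\ddiv,Y)$, hence $\qq\in\HH^1_\sharp(Y)$ by \eqref{eq_H1}, and Lemma \ref{lem:A1} applied with $s=0$ gives
\beqs
|\qq|_{\HH^1(Y)} \le |\curly\qq|_{\LL^2(Y)} = \|\pphi\|_{\LL^2(Y)}.
\eeqs
Combining this with the $\LL^2$ estimate above and using $\sqrt{1+1/(4\pi^2)}\le 1+1/(2\pi)\le 1+1/\pi$ yields \eqref{eq_stability_vector_potential}.

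For uniqueness, suppose $\qq_1,\qq_2\in\HH^1_\sharp(Y)$ both satisfy \eqref{eq_vector_potential}. Their difference $\qq$ lies in $\HH^1_\sharp(Y)$ with $\curly\qq = 0$, $\divy\qq=0$, and zero mean. Then \eqref{eq_Fourier_div}--\eqref{eq_Fourier_curl} imply $\al\cdot\qq^\al = 0$ and $\al\times\qq^\al = 0$ for every $\al$, whence by the vector identity $|\al|^2 \qq^\al = 0$, forcing $\qq^\al=0$ for $\al\ne 0$, while $\qq^0 = 0$ by the zero-mean condition; thus $\qq=0$.

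I do not anticipate serious obstacles: the proof is essentially an explicit Fourier-series computation. The only delicate point is handling the $\al=0$ mode, which is taken care of by the zero-mean assumptions built into $\HH_\sharp$ and by the hypothesis that $\pphi$ has zero mean (so that no compatibility condition is violated). Every other step reduces to bookkeeping with the Parseval identity and the standard cross-product identity.
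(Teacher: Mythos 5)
Your proof is correct and takes the same Fourier-series approach as the paper, constructing $\qq$ coefficient-by-coefficient via the identical formula $\qq^\al = -\frac{1}{2\pi\ri}\,\al\times\pphi^\al/|\al|^2$ and verifying $\divy\qq=0$, $\curly\qq=\pphi$ via the vector identity $|\al|^2\vv=(\al\cdot\vv)\al+(\al\times\vv)\times\al$. The only small variation is that you obtain $\|\qq\|_{\LL^2(Y)}\le(2\pi)^{-1}\|\pphi\|_{\LL^2(Y)}$ directly from the Fourier coefficients (using $|\al|\ge 1$), whereas the paper invokes the Poincar\'e inequality $\|\qq\|_{\LL^2(Y)}\le\pi^{-1}|\qq|_{\HH^1(Y)}$ for the convex domain $Y$; both routes yield the stated constant $1+1/\pi$, and you likewise prove uniqueness by vanishing Fourier coefficients rather than via the seminorm estimate, which is an equivalent reformulation.
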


The analogue of Lemma \ref{lemma_smooth_vector_potential} in bounded domains appears in,
e.g., \cite[Theorem 3.4]{GiRa:86}.

\begin{proof}[Proof of Lemma \ref{lemma_smooth_vector_potential}]
Under the assumption that the solution $\qq \in \HH^1_\sharp(Y)$ exists and is unique,
the inequality \eqref{eq_Hs_divy_free} with $s = 0$ shows that
\begin{equation*}
|\qq|_{\HH^{1}(Y)} \leq \|\curly \qq\|_{\LL^2(Y)} = \|\pphi\|_{\LL^2(Y)}.
\end{equation*}
Since the mean value of $\qq$ vanishes and $Y$ is convex, the Poincar\'e inequality
\begin{equation*}
\|\qq\|_{\LL^2(Y)} \leq \pi^{-1}|\qq|_{\HH^{1}(Y)},
\end{equation*}
holds, and \eqref{eq_stability_vector_potential} immediately follows.
We now show that if the solution to \eqref{eq_stability_vector_potential}
exists, it is necessarily unique. It is sufficient to prove that if $\pphi = \boldsymbol 0$, then $\qq=\boldsymbol0$. If $\pphi = \boldsymbol 0$ then 
$\curly \qq= \boldsymbol 0$, and 
$|\qq|_{\HH^{1}(Y)} = 0$ by \eqref{eq_Hs_divy_free}; therefore
$\qq$ is constant. But since $\qq \in \HH^1_\sharp(Y)$, its mean value
vanishes, showing that $\qq = \boldsymbol 0$.
It remains to prove existence; given $\pphi \in \HH_\sharp(\ddiv,Y)$,
define the vector $\qq^\al$ by 
\begin{equation}\label{eq_Fourier_coeff}
\qq^{\boldsymbol 0} := \boldsymbol 0, \qquad
\qq^\al \eq -\frac{1}{2 \pi \ri }\frac{\al \times \pphi^{\al}}{|\al|^2}, \quad 
\al \in \ZZZ^d \setminus \{\boldsymbol 0\}.
\end{equation}
Since $\sum_{\al \in \ZZZ^d} |\qq^\al|^2 \leq (2\pi)^{-1} \sum_\al |\pphi^\al|^2<\infty$,
the results recapped in \S\ref{sec:FS} imply that
\begin{equation*}
\qq \eq \sum_{\al \in \ZZZ^d} \qq^\al \re^\al \in \LL^2(Y).
\end{equation*}
Since $(\qq,1)_Y = \qq^{\boldsymbol 0} = \boldsymbol 0$,
$\qq \in \LL^2_\sharp(Y)$. It remains to show that $\qq$ satisfies \eqref{eq_vector_potential},
since then the fact that $\qq \in \HH^1_\sharp(Y)$ will then follow from $\pphi\in\LL^2(Y)$
and the relation \eqref{eq_H1}.

First, since $\al \cdot (\al \times \vv) = 0$ for all $\vv \in \mathbb{C}^d$,
$\al\cdot \qq^\al = 0$ for all $\al \in \ZZZ^d$; therefore \eqref{eq_Fourier_div}
implies that $\divy \qq = 0$. 
Next, the fact that $\divy \pphi = 0$ implies by \eqref{eq_Fourier_div} that $\al \cdot \pphi^\al = 0$ 
for all $\al \in \ZZZ^d \setminus \{\boldsymbol 0\}$. Therefore,
\begin{equation*}
|\al|^2 \pphi^\al
=
(\al \cdot \pphi^\al) \al + (\al \times \pphi^\al) \times \al
=
(\al \times \pphi^\al) \times \al
=
2 \pi \ri |\al|^2 (\al \times \qq^\al),
\end{equation*}
and therefore
\begin{equation}\label{eq:EAS1}
2 \pi \ri  (\al \times \qq^\al)=\pphi^\al 
\end{equation}
for all $\al \in \ZZZ^d \setminus \{\boldsymbol 0\}$.
For $\al = \boldsymbol 0$, 
\begin{equation}\label{eq:EAS2}
2 \pi \ri  (\al \times \qq^\al) = \boldsymbol 0 = \pphi^\al,
\end{equation}
since $\pphi \in \HH_\sharp(\ddiv,Y)$.
Combining \eqref{eq:EAS1}, \eqref{eq:EAS2}, and 
\eqref{eq_Fourier_curl}, we find that $\curly \qq = \pphi$,
which concludes the proof.
\end{proof}

The next lemma involves the $\LL^\infty(Y)$ and $L^\infty(Y)$ norms defined by 
\beq\label{eq:Linftynorm}
\N{\qq}_{\LL^\infty(Y)}:= \esssup_{\yy \in Y} |\qq(\yy)|_2
\quad
\tand
\quad
\N{q}_{L^\infty(Y)}:= \esssup_{\yy \in Y} |q(\yy)|.
\eeq

\begin{theorem}
\label{theorem_vector_potential}
Let $\pphi \in \HH_\sharp(\ddiv,Y)$ with $\divy \pphi = 0$,
and assume that $\pphi$ does not depend on the $y_d$ variable.

(i) If $\qq \in \HH^1_\sharp(Y)$ is the unique solution
to \eqref{eq_vector_potential}, then $\qq$ does not depend on
the $y_d$ variable.

(ii) If, in addition, $\pphi \in \HH^s(Y)$ for some $s > 0$,
then $\qq \in \LL^\infty(Y)$ and there exists $\Cemb>0$ such that
\begin{equation}
\label{eq_vector_potential_Linfty}
\|\qq\|_{\LL^\infty(Y)} \leq \Cemb \left (1 + \pi^{-1}\right )\|\pphi\|_{\HH^s(Y)}.
\end{equation}
\end{theorem}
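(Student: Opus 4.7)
The plan is to exploit the explicit Fourier representation of $\qq$ from the proof of Lemma \ref{lemma_smooth_vector_potential}, namely $\qq^{\boldsymbol 0} = \boldsymbol 0$ and
\[
\qq^\al = -\frac{1}{2\pi\ri}\, \frac{\al\times \pphi^\al}{|\al|^2}, \qquad \al \in \ZZZ^d\setminus\{\boldsymbol 0\}.
\]
For part (i), since $\pphi$ is independent of $y_d$, its Fourier coefficients vanish whenever $\alpha_d\neq 0$; the formula above then forces $\qq^\al = \boldsymbol 0$ whenever $\alpha_d\neq 0$, so $\qq$ is itself independent of $y_d$. (Alternatively, one could note that $\qq(\cdot + h\ee_d)$ solves the same system \eqref{eq_vector_potential} and lies in $\HH^1_\sharp(Y)$, and conclude by uniqueness from Lemma \ref{lemma_smooth_vector_potential}.)

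For part (ii), having established (i), I would view $\qq$ as a function on the $(d-1)$-dimensional torus $Y' := (0,1)^{d-1}$. Because only Fourier modes with $\alpha_d = 0$ contribute, the seminorms defined by \eqref{eq_fourier_sobolev_norm} satisfy $|\qq|_{\HH^t(Y)} = |\qq|_{\HH^t(Y')}$ for all $t\geq 0$, and likewise $\N{\qq}_{\LL^\infty(Y)} = \N{\qq}_{\LL^\infty(Y')}$ and $\N{\qq}_{\LL^2(Y)} = \N{\qq}_{\LL^2(Y')}$. From Lemma \ref{lem:A1} with exponent $s$ (using $\curly \qq = \pphi$ and $\divy \qq = 0$) together with the Poincaré inequality used in \eqref{eq_stability_vector_potential}, one obtains $|\qq|_{\HH^{1+s}(Y)} \leq |\pphi|_{\HH^s(Y)}$ and $\N{\qq}_{\LL^2(Y)} \leq \pi^{-1}\N{\pphi}_{\LL^2(Y)}$. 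Unfolding the definition \eqref{eq_norm_not_int} of $\N{\cdot}_{\HH^{1+s}(Y)}$ then yields
\[
\N{\qq}_{\HH^{1+s}(Y)} \leq \left(1 + \tfrac{1}{\pi}\right) \N{\pphi}_{\HH^s(Y)}.
\]

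The last step is to apply the scalar Sobolev embedding $H^{1+s}(Y') \hookrightarrow L^\infty(Y')$ componentwise via \eqref{eq_norm_scalar_vector2}. This embedding is valid precisely when $1+s > (d-1)/2$, which for the paper's setting $d\in\{2,3\}$ is satisfied by every $s>0$. Combining everything,
\[
\N{\qq}_{\LL^\infty(Y)} = \N{\qq}_{\LL^\infty(Y')} \leq \Cemb \N{\qq}_{\HH^{1+s}(Y')} \leq \Cemb\!\left(1+\tfrac{1}{\pi}\right)\!\N{\pphi}_{\HH^s(Y)},
\]
which is \eqref{eq_vector_potential_Linfty}.

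The only subtle point is dimensional: the direct full-dimensional Sobolev embedding $\HH^{1+s}(Y)\hookrightarrow \LL^\infty(Y)$ requires $1+s>d/2$ and fails for small $s>0$ when $d=3$. The crucial role of part (i) is therefore to reduce the effective dimension by one, which is exactly enough to make the embedding work for any $s>0$ in the three-dimensional case. All other ingredients are direct consequences of the Fourier-based framework already developed in the appendix.
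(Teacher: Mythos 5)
Your proof is correct. Part (ii) follows the same route as the paper: reduce to the $(d-1)$-dimensional torus $Y'$ using part (i), apply Lemma \ref{lem:A1} and the stability estimate \eqref{eq_stability_vector_potential} to get $\|\qq\|_{\HH^{1+s}(Y)}\leq(1+1/\pi)\|\pphi\|_{\HH^s(Y)}$, and then use the componentwise Sobolev embedding $H^{1+s}(Y')\hookrightarrow L^\infty(Y')$, which is valid for all $s>0$ when $d\leq 3$ precisely because the effective dimension has dropped by one. Your explanation of why the dimensional reduction is essential matches the paper's own remark word for word in spirit.

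For part (i) you take a genuinely simpler route than the paper. The paper constructs, via separate $(d-1)$-dimensional Fourier arguments, auxiliary potentials $p_3$ and $\pp'$ (see \eqref{eq_Sunday1}), assembles $\pp=(\pp',p_3)$, verifies that $\pp$ solves \eqref{eq_vector_potential} and is in $\HH^1_\sharp(Y)$, and only then invokes uniqueness from Lemma \ref{lemma_smooth_vector_potential} to conclude $\qq=\pp$. You instead read off directly from the explicit coefficient formula $\qq^\al=-\frac{1}{2\pi\ri}(\al\times\pphi^\al)/|\al|^2$ that $\qq^\al=\boldsymbol 0$ whenever $\alpha_d\neq0$, because the independence of $\pphi$ from $y_d$ kills those $\pphi^\al$. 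This is shorter, avoids introducing the $\vcurl_{\yy'}$/$\scurl_{\yy'}$ machinery, and immediately covers both $d=2$ and $d=3$ in a uniform way. Your alternative via the translation operator $\qq\mapsto\qq(\cdot+h\ee_d)$ and uniqueness is the same idea the paper used elsewhere (in Lemma \ref{lemma_cell_problems}); either version is valid. What the paper's longer construction buys is a self-contained exhibition of the $(d-1)$-dimensional potential structure, which is reused implicitly in the embedding step, but your argument reaches the same conclusion with less overhead.
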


\begin{proof}
(i) 
We consider the case $d=3$; the case $d=2$ is very similar (and easier).
For $\yy \in \Rea^3$, we write $\yy = (\yy',y_3)$ for $\yy'\in Y' = (0,1)^2$.
Similarly, for $\ppsi\in \Rea^3$ we write $\ppsi = (\ppsi',\psi_3)$.
We introduce the notation that, for $\ppsi'(\yy')$ a two-dimensional vector independent of $y_3$,
$\ddiv_{\yy'} \ppsi' := \partial_{y_1} \psi'_1 + \partial_{y_2} \psi'_2$
and $\scurl_{\yy'} \ppsi' := \partial_{y_1} \psi'_2 - \partial_{y_2} \psi'_1$.
Furthermore, if $\psi_3(\yy')$ is a scalar field that does not depend on $y_3$, then
$\ccurl_{\yy'} \psi_3 := (\partial_{y_2} \psi_3,-\partial_{y_1} \psi_3)$ (i.e.~$\ccurl_{\yy'}$
is the rotation of the gradient of $\psi$).  These definitions imply that, if
$\ppsi = (\ppsi',\psi_3)$ is a vector field that does not depend on $y_3$, then 
\beq\label{eq:EAS3}
\ccurl_{\yy} \ppsi = (\ccurl_{\yy'} \psi_3,\scurl_{\yy'} \ppsi').
\eeq
Now, since $\pphi$ is independent of $y_3$ and  $\divy \pphi = 0$, $\ddiv_{\yy'} \pphi' =0$.
Observe that, with the notation $\pphi =(\pphi',\phi_3)$, both $\pphi'$ and $\phi_3$ have
mean value zero. Using the lower-dimensional analogues of the Fourier-series argument
used to prove Lemma \ref{lemma_smooth_vector_potential}, one can show that there exists
a unique $p_d \in H_\sharp(\ccurl, Y')$ and a unique $\pp' \in \HH_\sharp(\scurl, Y')$
such that
\begin{equation}
\label{eq_Sunday1}
\vcurl_{\yy'} p_3 = \pphi',
\qquad
\left \{
\begin{array}{rcl}
\scurl_{\yy'} \pp' &=& \phi_3,
\\
\ddiv_{\yy'} \pp' &=& 0.
\end{array}
\right .
\end{equation}
Indeed, the Fourier coefficients of $p_3$ and $\pp'$ are given by 
\beqs
(p_3)^\al =
\begin{cases} 
|\al|^{-2}  (\alpha_2, -\alpha_1)\cdot (\pphi')^\al, & \al \neq {\bf 0},\\
0, & \al  ={\bf 0},
\end{cases}
\quad
\tand
\quad
(\pp')^\al =
\begin{cases} 
|\al|^{-2}  (\alpha_2, -\alpha_1)(\phi_d)^\al, & \al \neq {\bf 0},\\
0, & \al  ={\bf 0},
\end{cases}
\eeqs
(compare to \eqref{eq_Fourier_coeff}). We then set $\pp \eq (\pp',p_3)$.
Since $\pp'$ and $p_3$ both have mean value zero, so does $\pp$.
Since $\pp$ is independent of $y_3$, \eqref{eq:EAS3} implies that
\begin{equation*}
\curly \pp = (\vcurl_{\yy'} p_3,\scurl_{\yy'} \pp'),
\end{equation*}
which equals $\pphi$ by \eqref{eq_Sunday1}; therefore $\pp\in \HH_\sharp(\ccurl,Y)$. Since
$\divy \pp = \ddiv_{\yy'} \pp'+\partial p_3/\partial y_3 = 0,$ $\pp\in \HH_\sharp(\ddiv,Y)$
and then the relation \eqref{eq_H1} implies that $\pp\in \HH^1_\sharp(Y)$. Since the solution
to \eqref{eq_vector_potential} is unique by Lemma \ref{lemma_smooth_vector_potential},
$\qq = \pp$, so that $\qq$ does not depend on $y_3$.

(ii)
Since $\curl \qq = \pphi \in \HH^s(Y)$, $\qq \in \HH^{1+s}(Y)$ by Lemma \ref{lem:A1}, with 
$|\qq|_{\HH^{1+s}(Y)} \leq |\pphi|_{\HH^s(Y)}.$
Using this with \eqref{eq_stability_vector_potential}, we find that
\begin{align}
\|\qq\|_{\HH^{1+s}(Y)}^2
=
\|\qq\|_{\HH^{1}(Y)}^2 + |\qq|_{\HH^{1+s}(Y)}^2
\label{tmp_linfty_2}
\leq
\left (1 + \pi^{-1}\right )^2\|\pphi\|_{\LL^2(Y)}^2 + |\pphi|_{\HH^s(Y)}^2
\leq
\left (1 + \pi^{-1}\right )^2\|\pphi\|_{\HH^s(Y)}^2,
\end{align}
Our goal is now to use Sobolev embedding to bound $\|\qq\|_{L^\infty(Y)}$
by $\|\qq\|_{\HH^{1+s}(Y)}$.  Recall that $\|v\|_{L^\infty}\lesssim \|v\|_{H^{d/2+s}}$ for
$0<s<1$ (see, e.g., \cite[Theorem 3.26]{Mc:00}), indicating that for $d=3$ we require the
$\HH^{3/2+s}$ norm of $\qq$. However, since $\qq$ and its components are only functions of
$\yy'\in Y' \subset \Rea^{d-1}$, and $d-1 \leq 2$, we only require the $\HH^{1+s}$ norm of $\qq$.
Given $0<s<1$, there exists a constant $\Cemb$ such that
\begin{equation}\label{eq_Sobolev_emb}
\|\psi\|_{\LL^\infty(Y')} \leq \Cemb\|\psi\|_{H^{1+s}(Y')}
\quad
\tfa \psi \in H^{1+s}(Y').
\end{equation}
Using (in this order) the definition of the $\LL^\infty(Y)$ norm \eqref{eq:Linftynorm},
the bound \eqref{eq_Sobolev_emb}, the fact that (by Part (i)) $\qq \in \HH^{1+s}(Y)$
and does not depend on the $y_d$ variable, and the property \eqref{eq_norm_scalar_vector2},
we have
\begin{equation}
\label{tmp_linfty_1}
\|\qq\|_{\LL^\infty(Y)}^2
\leq
\sum_{j=1}^d \|q_j\|_{L^\infty(Y')}^2
\leq
\Cemb^2 \sum_{j=1}^d \|q_j\|_{H^{1+s}(Y')}^2
=
\Cemb^2 \|\qq\|_{\HH^{1+s}(Y)}^2.
\end{equation}
The result \eqref{eq_vector_potential_Linfty} then follows from combining \eqref{tmp_linfty_1} and \eqref{tmp_linfty_2}
\end{proof}

\subsection{Bivariate vector potentials}

We now consider functions $\pphi: \Din \times Y \to \mathbb C$
such that
\begin{subequations}
\label{eq_assumption_pphi}
\begin{equation}
\pphi(\xx,\yy) = \hM(\yy) \vv(\xx),
\end{equation}
where $\hM$ is a $Y$-periodic matrix-valued function with zero mean value that does not depend on
$y_d$ and is such that
\begin{equation}
\hM_{j \ell} \in H^s(Y) \cap L^\infty(Y)
\end{equation}
for some fixed $s > 0$, and $\vv \in \HH^2(\Din)$. We further assume that
$\divy \pphi(\xx,\cdot) = 0$
in the sense of distribution for a.e.~$\xx \in \Din$.
\end{subequations}

\begin{lemma}\label{lem:A5}
Assume that $\pphi$ satisfies \eqref{eq_assumption_pphi} and additionally
that $\vv \in \CC^\infty(\overline{\Din})$. Then, for all $\xx \in \overline{\Din}$,
there exists a unique $\qq(\xx,\cdot) \in \HH^1_\sharp(Y)$ such that
\begin{equation}
\label{eq_bivariate_vector_potential}
\curly \qq(\xx, \cdot) = \pphi(\xx,\cdot)
\quad\tand\quad
\divy \qq(\xx, \cdot) = 0
\end{equation}
for all $\yy \in Y$. Furthermore, $\qq \in C^\infty(\overline{\Din}, \HH^1_\sharp(Y))$
with
\beq\label{eq:EAS4}
\N{(\partial^\bt_{\xx} \qq)(\xx,\cdot)}_{\HH^1(Y)} \leq \left( 1+ \pi^{-1}\right) \N{\hM(\cdot) \partial^\bt_{\xx} \vv(\xx,\cdot)}_{\LL^2(Y)}
\eeq
and  $\qq(\xx,\cdot) \in C^\infty(\overline{\Din}, \LL^\infty(Y))$ with 
\begin{equation}
\label{eq_bivariate_vector_potential_Linfty}
\|(\partial^\bt_{\xx} \qq)(\xx,\cdot)\|_{\LL^\infty(Y)}
\leq
\Cemb\left(1+ \pi^{-1}\right) \|\hM\|_{\HH^s(Y)} |(\partial^\bt \vv)(\xx)|
\end{equation}
for all $\bt \in \ZZZ^d_+$ and $\bx \in \overline{\Din}$. In particular, for all $\bt \in \ZZZ^d_+$,
\begin{equation}
\label{eq_bivariate_sobolev_Din}
\|(\partial^\bt_{\xx} \qq)^\eps\|_{\LL^2(\Din)}
\leq
\Cemb \left(1+ \pi^{-1}\right)\|\hM\|_{\HH^s(Y)} \N{\partial^\bt \vv}_{\LL^2(\Din)}, \quad\tand
\end{equation}
\begin{equation}
\label{eq_bivariate_sobolev_Gamma}
\|(\partial^\bt_{\xx} \qq)^\eps\|_{\LL^2(\Gamma)}
\leq
\Cemb\left(1+ \pi^{-1}\right)\|\hM\|_{\HH^s(Y)} \N{\partial^\bt \vv}_{\LL^2(\Gamma)}.
\end{equation}
\end{lemma}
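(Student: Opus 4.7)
The plan is to apply Theorem \ref{theorem_vector_potential} pointwise in $\xx \in \overline{\Din}$ and then exploit the linearity of the vector-potential construction from Lemma \ref{lemma_smooth_vector_potential} to transfer the $C^\infty$ smoothness of $\vv$ in $\xx$ to smoothness of $\qq$ in $\xx$. The heavy lifting has already been done in the appendix, so the present lemma should amount to a careful unpacking of what Theorem \ref{theorem_vector_potential} says once the $\xx$-variable is present.

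First, fix $\xx \in \overline{\Din}$. Under the assumptions \eqref{eq_assumption_pphi}, $\pphi(\xx,\cdot) = \hM(\cdot)\vv(\xx)$ belongs to $\HH_\sharp(\ddiv,Y)$ (mean zero because $\hM$ has mean zero), is divergence-free in $\yy$ (by hypothesis), and is independent of $y_d$ (since $\hM$ is). Thus Theorem \ref{theorem_vector_potential} applies and produces a unique $\qq(\xx,\cdot) \in \HH^1_\sharp(Y)$ satisfying \eqref{eq_bivariate_vector_potential}, which moreover is independent of $y_d$. At this level, the estimates \eqref{eq_stability_vector_potential} and \eqref{eq_vector_potential_Linfty} already yield the $\bt = \bze$ case of \eqref{eq:EAS4} and \eqref{eq_bivariate_vector_potential_Linfty}, once one notes that $\partial^\bt\vv(\xx)$ is a constant vector in $\yy$ so that $\|\hM\partial^\bt\vv(\xx)\|_{\HH^s(Y)} \leq \|\hM\|_{\HH^s(Y)}|\partial^\bt\vv(\xx)|$.

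To handle general multi-indices $\bt$, observe that the map $\pphi(\xx,\cdot) \mapsto \qq(\xx,\cdot)$ from Lemma \ref{lemma_smooth_vector_potential} is linear in $\pphi$, and $\hM$ does not depend on $\xx$. Hence the map $\vv(\xx) \mapsto \qq(\xx,\cdot)$ factorises through a bounded linear operator $\CCC^d \to \HH^1_\sharp(Y)$. Since $\vv \in C^\infty(\overline{\Din};\CCC^d)$, we conclude that $\qq \in C^\infty(\overline{\Din}, \HH^1_\sharp(Y))$ and that $\partial^\bt_\xx \qq(\xx,\cdot)$ is, by uniqueness, precisely the vector potential (in the sense of \eqref{eq_vector_potential}) of the function $\hM \partial^\bt \vv(\xx)$. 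Applying \eqref{eq_stability_vector_potential} and \eqref{eq_vector_potential_Linfty} to this vector potential yields \eqref{eq:EAS4} and \eqref{eq_bivariate_vector_potential_Linfty} for all $\bt$. The inclusion $\qq \in C^\infty(\overline{\Din}, \LL^\infty(Y))$ then follows from \eqref{eq_bivariate_vector_potential_Linfty} combined with the fact that difference quotients of $\qq$ in $\xx$ satisfy analogous $\LL^\infty(Y)$-bounds.

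For the bounds \eqref{eq_bivariate_sobolev_Din} and \eqref{eq_bivariate_sobolev_Gamma}, use the pointwise estimate $|(\partial^\bt_\xx \qq)^\eps(\xx)|_2 \leq \|(\partial^\bt_\xx \qq)(\xx,\cdot)\|_{\LL^\infty(Y)}$, square, insert \eqref{eq_bivariate_vector_potential_Linfty}, and integrate over $\Din$ or over $\Gamma$; the trace of $\partial^\bt\vv$ on $\Gamma$ is well-defined because $\vv \in C^\infty(\overline{\Din})$. The main obstacle is essentially bookkeeping: verifying that $\hM\partial^\bt\vv(\xx)$ remains within the class of admissible right-hand sides (mean zero, divergence-free, $y_d$-independent) at each differentiation, and that the two types of $\LL^\infty$ on $Y$ versus $\CCC^d$ norms are tracked correctly; no genuine analytic difficulty arises beyond what is already in Theorem \ref{theorem_vector_potential}.
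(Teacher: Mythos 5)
Your proof is correct, and it takes a genuinely different (and arguably cleaner) route to the smoothness statements than the paper does. The paper establishes $\qq \in C^\infty(\overline{\Din}, \HH^1_\sharp(Y)) \cap C^\infty(\overline{\Din}, \LL^\infty(Y))$ by a concrete difference-quotient argument: it compares $\qq(\xx,\cdot)$ and $\qq(\xx',\cdot)$, exploits linearity of the vector-potential construction to show the difference solves \eqref{eq_vector_potential} with right-hand side $\pphi(\xx,\cdot)-\pphi(\xx',\cdot)$, derives Lipschitz-type bounds on $\delta$-interior subdomains $D_{\rm in}^\delta$, lets $\delta \to 0$ to get the $|\bt|=1$ case, and then inducts on $|\bt|$. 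You instead observe that, since $\hM$ does not depend on $\xx$, the assignment $\bw \mapsto$ (unique vector potential of $\hM\bw$) defines a bounded linear operator $L$ from the relevant subspace $V \subset \CCC^d$ (those $\bw$ with $\divy(\hM\bw)=0$) into $\HH^1_\sharp(Y)$ (and, by \eqref{eq_vector_potential_Linfty}, also into $\LL^\infty(Y)$), and that $\qq(\xx,\cdot) = L(\vv(\xx))$; smoothness and the identification $\partial^\bt_\xx \qq = L(\partial^\bt\vv)$ then follow from the general fact that composing a $C^\infty$ map with a bounded linear operator preserves $C^\infty$. This makes the induction and the $\delta$-interior detour unnecessary, and the estimates \eqref{eq:EAS4} and \eqref{eq_bivariate_vector_potential_Linfty} drop out at one stroke for all $\bt$. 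The subsequent integration over $\Din$ and $\Gamma$ is the same in both arguments.

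One point you flag as "bookkeeping" deserves a sentence in a polished write-up: you need $\partial^\bt\vv(\xx)$ to lie in $V$ for all $\bt$ and $\xx$, not just $\vv(\xx)$ itself. This is genuine but easy: $V$ is a closed linear subspace of $\CCC^d$, and since $\vv$ is continuous with $\vv(\xx)\in V$ a.e.\ (hence everywhere), all difference quotients and hence all $\xx$-derivatives of $\vv$ remain in $V$ (equivalently, $\divy(\hM \partial^\bt\vv(\xx)) = \partial^\bt_\xx \big(\divy(\hM\vv(\xx))\big) = 0$ because $\hM$ is $\xx$-independent). With that spelled out, the proof is complete.
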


\begin{proof}
The existence and uniqueness of the solution $\qq(\xx,\cdot) \in \HH^1_\sharp(Y)$
for all $\xx \in \overline{\Din}$ is a direct consequence of Theorem \ref{theorem_vector_potential}.
Now, set $\delta > 0$ and define $D_{\rm in}^\delta  \subset \Din$ as the subset of $\Din$ of
distance $\geq \delta$ from $\Gamma$. With $\xx \in D_{\rm in}^\delta $ and $\xx' \in \Din$,
\begin{equation*}
\curly \left (\qq(\xx,\cdot)-\qq(\xx',\cdot)\right )
=
\pphi(\xx,\cdot)-\pphi(\xx',\cdot) \quad\tand\quad
\divy \big(\qq(\xx,\cdot)-\qq(\xx',\cdot)\big)
=
0,
\end{equation*}
so that, by \eqref{eq_stability_vector_potential},
\begin{align*}
\|\qq(\xx,\cdot)-\qq(\xx',\cdot)\|_{\HH^{1}(Y)}
&\leq\left(1+  \pi^{-1}
\right)
\|\pphi(\xx,\cdot)-\pphi(\xx',\cdot)\|_{\LL^2(Y)}\\
&\leq\left(1+ \pi^{-1}
\right)
|\xx-\xx'| \sup_{\xx \in \Din}\|\hM(\cdot) \gradx \vv(\xx,\cdot)\|_{\LL^2(Y)},
\end{align*}
and, by \eqref{eq_vector_potential_Linfty},
\begin{align*}
\|\qq(\xx,\cdot)-\qq(\xx',\cdot)\|_{\LL^\infty(Y)}
&\leq\Cemb\left(1+ \pi^{-1}\right)
\|\pphi(\xx,\cdot)-\pphi(\xx',\cdot)\|_{\HH^s(Y)},\\
&\leq
\Cemb\left(1+ \pi^{-1}
\right) |\xx-\xx'| \sup_{\xx \in \Din}\|\hM(\cdot) \gradx \vv(\xx,\cdot)\|_{\HH^s(Y)}.
\end{align*}
Since the above inequalities hold for any $\xx'\in \Din$, we deduce that
\begin{equation*}
\sup_{\xx \in D_{\rm in}^\delta } \|\gradx \qq(\xx,\cdot)\|_{\HH^{1}(Y)}
\leq \left(1+ \pi^{-1}\right)
\sup_{\xx \in \Din} \|\hM(\cdot) \gradx \vv(\xx,\cdot)\|_{\LL^2(Y)},
\end{equation*}
and
\begin{equation*}
\sup_{\xx \in D_{\rm in}^\delta } \|\gradx \qq(\xx,\cdot)\|_{\LL^\infty(Y)}
\leq \Cemb\left(1+ \pi^{-1}\right)
\sup_{\xx \in \Din} \|\hM(\cdot) \gradx \vv(\xx,\cdot)\|_{\HH^s(Y)}.
\end{equation*}
But now, since the upper bounds are independent of $\delta$, we obtain
\eqref{eq:EAS4} and \eqref{eq_bivariate_vector_potential_Linfty} 
for $|\bt| = 1$ by letting $\delta \to 0$. We have there proved
that $\qq \in C^1(\overline{\Din},\HH^1(Y)) \cap C^1(\overline{\Din},\LL^\infty(Y))$;
we can therefore differentiate \eqref{eq_bivariate_vector_potential} with respect to $x_j$,
to obtain that
$\partial_{x_j} \qq \in C^0(\overline{\Din},\HH^1(Y)) \cap C^0(\overline{\Din},\LL^\infty(Y))$.
By linearity, we can then repeating the above argument with
$\widetilde \qq \eq \partial_{x_j} \qq$ and $\widetilde \pphi = \partial_{x_j} \pphi$.
We then obtain \eqref{eq:EAS4} and \eqref{eq_bivariate_vector_potential_Linfty}
by induction on $|\bt|$.

By integrating over $\Din$ and using \eqref{eq_bivariate_vector_potential_Linfty}, we have
\begin{align}
\label{eq_Sunday2}
\|(\partial^\bt_{\xx} \qq)^\eps\|_{\LL^2(\Din)}^2
=
\int_{\Din} \left |(\partial^\bt_{\xx} \qq)\left (\xx,\frac{\xx}{\eps}\right )\right |^2 \rd\xx
&\leq
\int_{\Din} \left \|(\partial^\bt_{\xx} \qq)
\left (\xx,\cdot\right )\right \|_{\LL^\infty(Y)}^2 \rd\xx,
\\
&\leq
(\Cemb)^2  \left(1+ \pi^{-1}\right)^2\|\hM\|_{\HH^s(Y)}^2
\int_{\Din}  |(\partial^\bt \vv)(\xx)|^2 \rd\xx,\nonumber
\end{align}
and \eqref{eq_bivariate_sobolev_Din} follows.
The bound \eqref{eq_bivariate_sobolev_Gamma} follows similarly, but
integrating over $\Gamma$.
\end{proof}

\begin{theorem}
\label{theorem_bivariate_vector_potential}
Assume that $\pphi: \Din \times Y \to \CCC$ satisfies the three conditions in \eqref{eq_assumption_pphi} with
$\vv \in \HH^2(\Din)$. Then there exists a unique vector potential
$\qq \in H^2(\Din,\HH^1_\sharp(Y))$ such that
\begin{equation*}
\curly \qq(\xx,\cdot) = \pphi(\xx,\cdot)
\quad\tand\quad
\divy \qq(\xx,\cdot) = 0
\end{equation*}
for a.e.~$\xx \in \Din$.
Futhermore,
\begin{equation}
\label{eq_curlx_qq_eps_Din}
\|(\curlx \qq)^\eps\|_{\LL^2(\Din)} \leq \Cemb\left(1+ \pi^{-1}\right) \|\hM\|_{\HH^s(Y)}|\vv|_{\HH^1(\Din)},
\end{equation}
\begin{equation}
\label{eq_qq_eps_Gamma}
\N{\curl \qq^\eps  \cdot \nnu}_{\HH^{-1}(\Gamma)}
\leq
\N{\qq^\eps \times \nnu}_{\LL^2(\Gamma)}
\leq
\Cemb\left(1+ \pi^{-1}\right)
\N{\hM}_{\HH^s(Y)}\N{\vv}_{\LL^2(\Gamma)},
\end{equation}
and
\begin{equation}
\label{eq_curl_qq_eps_Gamma}
\N{\curl \qq^\eps  \cdot \nnu}_{\LL^2(\Gamma)}
\leq
\frac{1}{\eps} \|\hM\|_{\LL^\infty(Y)} \N{\vv}_{\LL^2(\Gamma)}
+
\Cemb\left(1+ \pi^{-1}\right) \|\hM\|_{\HH^s(Y)} |\vv|_{\HH^1(\Gamma)}.
\end{equation}
In addition, if $kR_0 \gtrsim 1$,
\begin{equation}
\label{eq_curl_qq_eps_Gamma_half}
\N{\curl \qq^\eps  \cdot \nnu}_{H^{-1/2}(\Gamma)}
\lesssim
(1 + (k\eps)^{-1/2})
\left ( k\N{\vv}_{\LL^2(\Din)} + |\vv|_{\HH^{1}(\Din)} \right )
\end{equation}
(where the omitted constants depend on $\|\hM\|_{\HH^s(Y)}$ and $\|\hM\|_{\LL^\infty(Y)}$).
\end{theorem}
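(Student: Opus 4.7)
The plan is to extend Lemma \ref{lem:A5} from smooth $\vv$ to $\vv \in \HH^2(\Din)$ by density and linearity, and then to derive the four bounds by passing to the limit in their smooth-$\vv$ analogues. For smooth $\vv$, the estimates \eqref{eq:EAS4} and \eqref{eq_bivariate_sobolev_Din} with $|\bt| \leq 2$ imply that the linear map $\vv \mapsto \qq$ is bounded from $\CC^\infty(\overline{\Din})$ (with the $\HH^2$-norm) into $H^2(\Din,\HH^1_\sharp(Y))$; approximating $\vv \in \HH^2(\Din)$ by $\vv_n \in \CC^\infty(\overline{\Din})$, the resulting $\qq_n$ form a Cauchy sequence whose limit is the desired $\qq$, with fiberwise uniqueness inherited from Lemma \ref{lemma_smooth_vector_potential}.

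Bound \eqref{eq_curlx_qq_eps_Din} is immediate from \eqref{eq_bivariate_sobolev_Din} applied componentwise to the curl. Bound \eqref{eq_qq_eps_Gamma} follows from the surface identity $\curl\pp \cdot \nnu = \ddiv_\Gamma(\pp \times \nnu)$ combined with duality, which gives $\N{\curl\qq^\eps \cdot \nnu}_{H^{-1}(\Gamma)} \leq \N{\qq^\eps \times \nnu}_{L^2(\Gamma)} \leq \N{\qq^\eps}_{L^2(\Gamma)}$; this is then controlled using \eqref{eq_bivariate_sobolev_Gamma} with $|\bt|=0$. For \eqref{eq_curl_qq_eps_Gamma}, I use \eqref{eq_eps_diff} together with \eqref{eq_bivariate_vector_potential} to decompose $\curl \qq^\eps = (\curlx \qq)^\eps + \eps^{-1}\pphi^\eps$; the first term is bounded via \eqref{eq_bivariate_sobolev_Gamma} with $|\bt|=1$, yielding the $|\vv|_{H^1(\Gamma)}$ contribution, and the second by $\eps^{-1}\N{\hM}_{L^\infty(Y)}\N{\vv}_{L^2(\Gamma)}$.

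The delicate step is \eqref{eq_curl_qq_eps_Gamma_half}, where the sharp prefactor $(1+(k\eps)^{-1/2})$ must be extracted. The plan is to bilinearly interpolate the form $B(\vv,\phi) := \langle \curl\qq^\eps \cdot \nnu,\phi\rangle_\Gamma$, after splitting the right-hand side of \eqref{eq_curl_qq_eps_Gamma} into its two summands to obtain three endpoint bounds:
\[
|B(\vv,\phi)| \lesssim \N{\vv}_{L^2(\Gamma)}\N{\phi}_{H^1(\Gamma)}, \quad
|B(\vv,\phi)| \lesssim \eps^{-1}\N{\vv}_{L^2(\Gamma)}\N{\phi}_{L^2(\Gamma)}, \quad
|B(\vv,\phi)| \lesssim |\vv|_{H^1(\Gamma)}\N{\phi}_{L^2(\Gamma)}.
\]
Interpolating the first with the second at $\theta=1/2$ (in $\phi$ only, since the $\vv$-factor is common) gives $|B(\vv,\phi)| \lesssim \eps^{-1/2}\N{\vv}_{L^2(\Gamma)}\N{\phi}_{H^{1/2}(\Gamma)}$; interpolating the first with the third (in both $\vv$ and $\phi$ simultaneously) gives $|B(\vv,\phi)| \lesssim \N{\vv}_{H^{1/2}(\Gamma)}\N{\phi}_{H^{1/2}(\Gamma)}$. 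Combining yields
\[
\N{\curl\qq^\eps\cdot\nnu}_{H^{-1/2}(\Gamma)} \lesssim \eps^{-1/2}\N{\vv}_{L^2(\Gamma)} + \N{\vv}_{H^{1/2}(\Gamma)}.
\]
Applying the multiplicative trace inequality \eqref{eq:multtrace2} (valid under $kR_0 \gtrsim 1$) converts the first term to $(k\eps)^{-1/2}\N{\vv}_{H^1_k(\Din)}$, and Lemma \ref{lem:trace_weight} combined with the embedding $\N{\vv}_{H^{1/2}(\Gamma)} \leq \N{\vv}_{H^{1/2}_k(\Gamma)}$ (for $k \geq 1$) converts the second to $\N{\vv}_{H^1_k(\Din)} \sim k\N{\vv}_{L^2(\Din)}+|\vv|_{H^1(\Din)}$.

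The main obstacle is precisely this interpolation step: splitting \eqref{eq_curl_qq_eps_Gamma} into its two summands \emph{before} interpolating is essential, since a naive interpolation of \eqref{eq_qq_eps_Gamma} with the full right-hand side of \eqref{eq_curl_qq_eps_Gamma} produces only a prefactor $(1+\eps^{-1/2})$, which is strictly weaker than the target $(1+(k\eps)^{-1/2})$ whenever $k>1$. The pairing of the $\eps^{-1}$ factor specifically with $\N{\vv}_{L^2(\Gamma)}$, which then benefits from the $k^{-1/2}$ gain in the multiplicative trace, is what produces the correct scaling.
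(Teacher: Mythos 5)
Your density argument and the proofs of \eqref{eq_curlx_qq_eps_Din}, \eqref{eq_qq_eps_Gamma}, and \eqref{eq_curl_qq_eps_Gamma} are correct and track the paper's argument (your surface identity $\curl\pp\cdot\nnu = \ddiv_\Gamma(\pp\times\nnu)$ is equivalent to the paper's use of the divergence theorem and the Green-type identity together with $\div\curl=0$). However, the interpolation step you propose for \eqref{eq_curl_qq_eps_Gamma_half} has a genuine gap.

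The two ``endpoint bounds'' $|B(\vv,\phi)|\lesssim\eps^{-1}\N{\vv}_{\LL^2(\Gamma)}\N{\phi}_{L^2(\Gamma)}$ and $|B(\vv,\phi)|\lesssim|\vv|_{\HH^1(\Gamma)}\N{\phi}_{L^2(\Gamma)}$ are \emph{not} consequences of \eqref{eq_curl_qq_eps_Gamma}: that estimate only yields $|B(\vv,\phi)|\lesssim(\eps^{-1}\N{\vv}_{\LL^2(\Gamma)}+|\vv|_{\HH^1(\Gamma)})\N{\phi}_{L^2(\Gamma)}$, and a bound of the form ``$\leq A+C$'' cannot be split into separate bounds ``$\leq A$'' and ``$\leq C$''. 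The two inequalities you write are in fact bounds on the two \emph{summands} $B_1(\vv,\phi):=\big((\curlx\qq)^\eps\cdot\nnu,\phi\big)_\Gamma$ and $B_2(\vv,\phi):=\eps^{-1}\big(\hM^\eps\vv\cdot\nnu,\phi\big)_\Gamma$, not on $B=B_1+B_2$. Your two interpolations therefore pair the first bound (valid for $B$) against bounds valid only for $B_1$ or $B_2$, which is not legitimate. Nor does the decomposition approach rescue the argument: to interpolate $B_1$ and $B_2$ separately you would also need the companion endpoint $|B_i(\vv,\phi)|\lesssim\N{\vv}_{\LL^2(\Gamma)}\N{\phi}_{H^1(\Gamma)}$ for each piece individually, and this is unavailable — the $\HH^{-1}(\Gamma)$ estimate in \eqref{eq_qq_eps_Gamma} relies precisely on $\curl\qq^\eps\cdot\nnu$ being the surface divergence of the \emph{bounded} field $\qq^\eps\times\nnu$, a structure the splitting destroys. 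The paper instead interpolates the single operator $T:\vv\mapsto\curl\qq^\eps\cdot\nnu$, keeping both endpoint bounds for the full $T$ but equipping the $\HH^1(\Gamma)$ endpoint with the $\eps$-dependent norm $\eps^{-1}\N{\cdot}_{\LL^2(\Gamma)}+|\cdot|_{\HH^1(\Gamma)}$ (exactly as in the proof of Lemma \ref{lemma_u1_eps}); the resulting interpolation space carries a norm $\lesssim\eps^{-1/2}\N{\cdot}_{\LL^2(\Gamma)}+|\cdot|_{\HH^{1/2}(\Gamma)}$, which gives the claimed $\eps^{-1/2}\N{\vv}_{\LL^2(\Gamma)}+|\vv|_{\HH^{1/2}(\Gamma)}$ directly. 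Your diagnosis of \emph{why} the naive interpolation loses the $(k\eps)^{-1/2}$ factor, and your observation that $\eps^{-1}$ must end up multiplying $\N{\vv}_{L^2(\Gamma)}$ in order to benefit from the multiplicative trace inequality, are both correct; the legitimate way to realize that pairing is the weighted-norm interpolation, not a three-bound split of $B$.
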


\begin{proof}
Once we have shown that $\qq$ exists, is unique, and satisfies the bounds 
\eqref{eq_bivariate_sobolev_Din} and \eqref{eq_bivariate_sobolev_Gamma} for $|\beta|\leq 1$
the bound \eqref{eq_curlx_qq_eps_Din} is a direct consequence of \eqref{eq_bivariate_sobolev_Din}, and 
the bound on $\|\qq^\eps \times \nnu\|_{\LL^2(\Gamma)}$
in \eqref{eq_qq_eps_Gamma}
is a direct consequence of
\eqref{eq_bivariate_sobolev_Gamma}.
We now prove the bound 
\begin{equation}
\label{eq:Monday1}
\N{\curl \qq^\eps  \cdot \nnu}_{\HH^{-1}(\Gamma)}
\leq
\N{\qq^\eps \times \nnu}_{\LL^2(\Gamma)}
\end{equation}
to complete the proof of  \eqref{eq_qq_eps_Gamma}.
Consider a test function $\phi \in C^\infty(\overline{\Din})$. Observe that
\begin{equation*}
\big(\curl \qq^\eps  \cdot \nnu,\phi\big)_\Gamma
=
\big(\curl \qq^\eps,\grad \phi\big)_{\Din}
=
-\big(\qq^\eps \times \nnu, \grad \phi\big)_\Gamma,
\end{equation*}
with the first equality holding by the divergence theorem and the fact that
$\div (\curl \qq^\eps) = 0$, and the second equality holding by the Green-type
identity in, e.g., \cite[Theorem 3.29]{Mo:03} and the fact that $\curl (\grad \phi)={\bf 0}$.
The bound \eqref{eq:Monday1} then follows since $\gamma(C^\infty(\overline{\Din}))$ is
dense in $H^1(\Gamma)$ (see, e.g., \cite[Page 276]{ChGrLaSp:12}).
To prove \eqref{eq_curl_qq_eps_Gamma}, we observe that, by \eqref{eq_eps_diff},
\begin{equation*}
\curl \qq^\eps
=
(\curlx \qq)^\eps + \frac{1}{\eps}(\curly \qq)^\eps =
(\curlx \qq)^\eps + \frac{1}{\eps} \hM^\eps \vv.
\end{equation*}
Since $(\hM)_{j\ell} \in L^\infty(Y)$ for $1 \leq j,\ell \leq d$,
\eqref{eq_curl_qq_eps_Gamma} follows from this last equality by taking norms and using
\eqref{eq_bivariate_sobolev_Gamma}.
To prove \eqref{eq_curl_qq_eps_Gamma_half}, 
we let $T: \LL^2(\Gamma) \to \HH^{-1}(\Gamma)$ be the operator
$T: \vv \to \curl \qq^{\eps} \cdot \nnu$. 
By interpolation (see, e.g., \cite[Appendix B]{Mc:00})
using the bounds \eqref{eq_qq_eps_Gamma} and \eqref{eq_curl_qq_eps_Gamma}
(similar to in the proof of Lemma \ref{lemma_u1_eps}), we have 
\begin{equation*}
\N{\curl \qq^\eps  \cdot \nnu}_{\HH^{-1/2}(\Gamma)}
\lesssim
\eps^{-1/2} \N{\vv}_{\LL^2(\Gamma)} + |\vv|_{\HH^{1/2}(\Gamma)}\quad\tfa \vv \in \HH^1(\Din).
\end{equation*}
By the multiplicative trace inequality \eqref{eq:multtrace},
\begin{align*}
&\eps^{-1} \N{\vv}_{\LL^2(\Gamma)}^2
\lesssim
\frac{1}{R \eps}
\N{\vv}_{\LL^2(\Din)}^2 + \eps^{-1} \N{\vv}_{\LL^2(\Din)}|\vv|_{\HH^1(\Din)}\\
&\qquad\lesssim
\frac{1}{R \eps}
\N{\vv}_{\LL^2(\Din)}^2 + \frac{k}{\eps} \N{\vv}_{\LL^2(\Din)}^2 + \frac{1}{k\eps}|\vv|_{\HH^1(\Din)}^2
\lesssim
\frac{1}{k\eps} \left (1 + \frac{1}{kR}\right )k^2
\N{\vv}_{\LL^2(\Din)}^2 + \frac{1}{k\eps}|\vv|_{\HH^1(\Din)}^2,
\end{align*}
and then \eqref{eq_curl_qq_eps_Gamma_half} follows from the assumption that $kR_0 \gtrsim 1$.

It therefore remains to show that $\qq$ exists, is unique, and satisfies the bounds 
\eqref{eq_bivariate_sobolev_Din} and \eqref{eq_bivariate_sobolev_Gamma} for $|\beta|\leq 1$.
Given $\vv \in \HH^2(\Din)$ there exist $\vv_n \in \CC^\infty(\overline{\Din})$ such that $\vv_n \rightarrow \vv$ in $\HH^2(\Din)$ as $n\tendi$. Let $\qq_n\in C^\infty(\overline{\Din}, \HH^1_\sharp(Y))$ be the solution of 
\beqs
\curly \qq_n(\xx,\yy) = \hM(\yy)\vv_n(\xx), \quad\divy \qq_n(\xx,\yy) =0,
\eeqs
which exists by Lemma \ref{lem:A5}.
By linearity,
\beqs
\curly \big(\qq_n-\qq_m\big)(\xx,\yy) = \hM(\yy)\big(\vv_n-\vv_m\big)(\xx), \quad\divy \big(\qq_n-\qq_m\big)(\xx,\yy) =0.
\eeqs
The definition of the Bochner norm
\beqs
\N{\qq}^2_{H^2(\overline{\Din}, \HH^1_\sharp(Y))}:= 
\sum_{|\bt|\leq 2}\N{\partial^{\bt}_{\xx} \qq}^2_{L^2(\overline{\Din}, \HH^1_\sharp(Y))}
\eeqs
and the bound \eqref{eq:EAS4} 
imply that
\beqs
\N{\qq_n-\qq_m}_{\HH^2(\overline{\Din}, \HH^1_\sharp(Y))}\leq \left(1+\pi^{-1}\right) \big\|\hM\big\|_{L^{\infty}(Y)}\N{\vv_n-\vv_m}_{\HH^2(\overline{\Din})}
\eeqs
for all $n, m$. Therefore $\qq_n$ is Cauchy sequence in $H^2(\overline{\Din}, \HH^1_\sharp(Y))$
and converges to some $\qq \in H^2(\overline{\Din}, \HH^1_\sharp(Y))$. By continuity of weak
derivative operators, $\curly \qq = \pphi$ and $\divy \qq = 0$ for a.~e.~$\bx \in \Din$.
Furthermore, by \eqref{eq_bivariate_vector_potential_Linfty},  
\beqs
\int_{\Din}\|(\partial^\bt_{\xx} \qq_n)(\xx,\cdot)\|_{\LL^\infty(Y)}^2\, \rd \bx
\leq
(\Cemb)^2  \left(1+\pi^{-1}\right)^2\|\hM\|_{\HH^s(Y)}^2 \int_{\Din}|(\partial^\bt \vv_n)(\xx)|^2\,\rd \bx
\eeqs
and
\beqs
\int_{\Din}\|\partial^\bt_{\xx} (\qq_n-\qq_m)(\xx,\cdot)\|_{\LL^\infty(Y)}^2\, \rd \bx
\leq
(\Cemb)^2  \left(1+\pi^{-1}\right)^2\|\hM\|_{\HH^s(Y)}^2 \int_{\Din}|\partial^\bt (\vv_n-\vv_m)(\xx)|^2\,\rd \bx
\eeqs
for $|\beta|\leq 2$ and $n,m \in \mathbb N$. Therefore $\qq_n$ is a Cauchy sequence in $H^2(\Din, \LL^\infty(Y))$ and 
\beq\label{eq:EAS6}
\int_{\Din}\|(\partial^\bt_{\xx} \qq)(\xx,\cdot)\|_{\LL^\infty(Y)}^2\,\rd \bx
\leq
(\Cemb)^2 \left(1+\pi^{-1}\right)^2 \|\hM\|_{\HH^s(Y)}^2 \int_{\Din} |(\partial^\bt \vv)(\xx)|^2 \, \rd \bx
\eeq
for $|\beta|\leq 2$.  The bounds \eqref{eq_bivariate_sobolev_Din} and
\eqref{eq_bivariate_sobolev_Gamma} for $|\beta|\leq 2$ therefore follow from 
\eqref{eq:EAS6} using similar reasoning to in \eqref{eq_Sunday2}.
\end{proof}

\begin{theorem}
\label{theorem_vector_potential_main}
Let $\Gamma$ be $C^{1,1}$. Assume that $u_0 \in H^2(\Din)$, 
$\hM\in H_\sharp(Y) \cap L^\infty(Y)$, for some $s>0$, and that $\hM$ does not depend on $y_d$.
Assume further that $\divy (\hM \bw)=0$ for all $\bw \in \Com^d$ independent of $\by$.
Then there exists a unique vector potential $\qq \in H^1(\Din,\HH^1_\sharp(Y))$ such that
\begin{equation*}
\curly \qq(\xx,\cdot) = \hM(\yy)\nabla u_0(\xx) \quad\tand\quad \divy \qq(\xx,\cdot) = 0
\end{equation*}
for a.e.~$\xx \in \Din$. Futhermore,
\begin{equation}
\label{eq_C1}
\|(\curlx \qq)^\eps\|_{\LL^2(\Din)}
\lesssim
|u|_{H^2(\Din)},
\qquad
\N{\qq^\eps \times \nnu}_{\LL^2(\Gamma)}
\lesssim
|u_0|_{H^1(\Gamma)},
\end{equation}
and, if $kR_0\gtrsim 1$,
\begin{equation}
\label{eq_C2}
\N{\curl \qq^\eps  \cdot \nnu}_{\HH^{-1/2}(\Gamma)}
\lesssim
(1+(k\eps)^{-1/2}) \left (k\|u_0\|_{H^1(\Din)} + \|u_0\|_{H^2(\Din)}\right )
\end{equation}
(where the omitted constants depend on $\|\hM\|_{\HH^s(Y)}$ and $\|\hM\|_{\LL^\infty(Y)}$).
\end{theorem}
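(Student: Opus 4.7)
The plan is to deduce Theorem \ref{theorem_vector_potential_main} as a direct corollary of Theorem \ref{theorem_bivariate_vector_potential} applied to $\pphi(\xx,\yy):=\hM(\yy)\nabla u_0(\xx)$, with the role of $\vv$ played by $\nabla u_0$. The only mismatch is regularity: Theorem \ref{theorem_bivariate_vector_potential} requires $\vv\in\HH^2(\Din)$ (so that $\qq\in H^2(\Din,\HH^1_\sharp(Y))$), but here $\vv=\nabla u_0$ only lies in $\HH^1(\Din)$, which is why the conclusion is merely $\qq\in H^1(\Din,\HH^1_\sharp(Y))$. The rest is a routine density/passage-to-the-limit argument, so the plan is really just to check hypotheses and bookkeep the bounds.

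First, I would verify the structural hypotheses of Theorem \ref{theorem_bivariate_vector_potential} for $\pphi$. The factorization $\pphi(\xx,\yy)=\hM(\yy)\nabla u_0(\xx)$ is given, $\hM\in\HH_\sharp^s(Y)\cap\LL^\infty(Y)$ (so it has zero mean and the required regularity), and $\hM$ is independent of $y_d$, all by hypothesis. The key divergence-free condition $\divy \pphi(\xx,\cdot)=0$ for a.e.~$\xx$ follows because $\nabla u_0(\xx)\in\Com^d$ is constant in $\yy$, so we can apply the hypothesis $\divy(\hM\bw)=0$ with $\bw=\nabla u_0(\xx)$.

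Next, to bypass the $\HH^2$ gap, I would choose a sequence $u_0^{(n)}\in C^\infty(\overline{\Din})$ with $u_0^{(n)}\to u_0$ in $H^2(\Din)$ (such a sequence exists since $\Gamma$ is $C^{1,1}$, see, e.g., \cite[Thm.~3.29]{Mc:00}). Setting $\vv_n:=\nabla u_0^{(n)}\in\HH^2(\Din)$, Theorem \ref{theorem_bivariate_vector_potential} yields $\qq_n\in H^2(\Din,\HH^1_\sharp(Y))$ solving the cell-wise curl/div system. By linearity, $\qq_n-\qq_m$ is the vector potential for $\hM(\yy)(\nabla u_0^{(n)}-\nabla u_0^{(m)})$, and applying \eqref{eq_bivariate_sobolev_Din} with multi-indices $|\bt|\le 1$ to this difference gives
\begin{equation*}
\|\qq_n-\qq_m\|_{H^1(\Din,\HH^1_\sharp(Y))}
\;\lesssim\;
\|\hM\|_{\HH^s(Y)}\,\|\nabla u_0^{(n)}-\nabla u_0^{(m)}\|_{\HH^1(\Din)}
\;\lesssim\;
\|u_0^{(n)}-u_0^{(m)}\|_{H^2(\Din)}.
\end{equation*}
Hence $(\qq_n)$ is Cauchy in $H^1(\Din,\HH^1_\sharp(Y))$ and its limit $\qq$ satisfies $\curly\qq=\hM\,\nabla u_0$ and $\divy\qq=0$ a.e.~in $\Din$. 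Uniqueness follows from the cell-wise uniqueness in Lemma \ref{lemma_smooth_vector_potential} applied pointwise in $\xx$.

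Finally, the bounds \eqref{eq_C1} and \eqref{eq_C2} follow by applying the bounds \eqref{eq_curlx_qq_eps_Din}, \eqref{eq_qq_eps_Gamma}, \eqref{eq_curl_qq_eps_Gamma_half} of Theorem \ref{theorem_bivariate_vector_potential} to $\qq_n$ with $\vv_n=\nabla u_0^{(n)}$, and passing to the limit $n\to\infty$. This requires that both sides converge: the left-hand sides converge because $\qq_n\to\qq$ in $H^1(\Din,\HH^1_\sharp(Y))$ (and hence the traces $\qq_n^\eps\times\nnu$ on $\Gamma$ and $\curl\qq_n^\eps$, $\curlx\qq_n^\eps$ on $\Din$ converge in the relevant norms), while the right-hand sides converge because $u_0^{(n)}\to u_0$ in $H^2(\Din)$ and the trace operators $H^2(\Din)\to H^1(\Gamma)$, $H^2(\Din)\to \HH^{1/2}(\Gamma)\hookrightarrow\LL^2(\Gamma)$ (using $\Gamma\in C^{1,1}$, Lemma \ref{lem:trace}) are continuous. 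The main ``hard part'' is not really hard: it is just verifying that the divergence hypothesis of Theorem \ref{theorem_bivariate_vector_potential} is available (which is precisely the role of the extra assumption $\divy(\hM\bw)=0$ for constant $\bw$) and then carefully tracking the two traces that produce the $(k\eps)^{-1/2}$ factor in \eqref{eq_C2} through the limit.
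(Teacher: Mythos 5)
Your proposal is correct and follows essentially the same route as the paper: invoke Theorem \ref{theorem_bivariate_vector_potential} with $\vv=\nabla u_0$ for smoother $u_0$, then pass to general $u_0\in H^2(\Din)$ by density, using that the right-hand sides of \eqref{eq_C1}--\eqref{eq_C2} are controlled by $\|u_0\|_{H^2(\Din)}$. The only slip is cosmetic: to conclude that $(\qq_n)$ is Cauchy in the Bochner space $H^1(\Din,\HH^1_\sharp(Y))$ you should integrate the bound \eqref{eq:EAS4} (which controls $\|\partial^\bt_{\xx}\qq\|_{\HH^1(Y)}$, with constant involving $\|\hM\|_{\LL^\infty(Y)}$) rather than \eqref{eq_bivariate_sobolev_Din} (which controls the pulled-back function $(\partial^\bt_{\xx}\qq)^\eps$ in $\LL^2(\Din)$), but this does not affect the validity of the argument.
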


\bpf
By applying Theorem \ref{theorem_bivariate_vector_potential} with $\vv=\nabla u_0$,
we see that $\qq$ exists and the bounds in \eqref{eq_C1}, \eqref{eq_C2} hold when $u_0\in H^3(\Din)$.
By the density of $H^3(\Din)$ in $H^2(\Din)$, and the fact that right-hand sides
of the bounds \eqref{eq_C1} are controlled by $\|u_0\|_{H^2(\Din)}$, $\qq$ exists and the bounds in \eqref{eq_C1}, \eqref{eq_C2} hold for $u_0\in H^2(\Din)$.
\epf

\section{Bounding $\Cse$ for $kR$ sufficiently small}\label{app:small_k}

As discussed in \S\ref{sec:homo_discuss}, in \cite[Bottom of Page 2539 and top of Page 2540]{CaGuMo:16} $\Cse$ is assumed to be independent of $\eps$ and to only depend on $n_{\min}, n_{\max}, A_{\min},$ and $A_{\max}$. We now show that this is true if
$kR$ is sufficiently small.

By \cite[Lemma 3.3]{MeSa:10}, given $k_0, R_0>0$, there exist $\CTR= \CTR(k_0 R_0)$ such that 
\beq\label{eq:CDtN}
- \Re \big\langle \DtN \phi,\phi\big\rangle_{\GR} \geq \CTR R^{-1}\N{\phi}^2_{L^2(\GR)}  \quad\tfa \phi \in H^{1/2}(\GR), \,\,k\geq k_0, \tand R\geq R_0.
\eeq
By, e.g., \cite[Corollary A.15]{ToWi:05}, there exists $\CPF = \CPF(\Omega_-)$ (`PF' standing for `Poincar\'e--Friedrichs') such that
\beq\label{eq:CPF}
R^{-2} \N{v}^2_{L^2(B_R)} \leq \CPF \left(R^{-1}\N{ v}^2_{L^2(\GR)} + \N{\nabla v}^2_{L^2(B_R)}\right) \quad\tfa v\in H^1(B_R).
\eeq

\ble[Coercivity of $b(\cdot,\cdot)$ for $kR$ sufficiently small]\label{lem:coercivity}
If 
\beq\label{eq:lower_coercivity}
kR \leq 3 n_{\max} \CPF\big( \min\{ A_{\min}, \CTR\}\big)^{-1},
\eeq
then
\beq\label{eq:coercivity}
\Re b(v,v) \geq \frac{ \min\{ A_{\min}, \CTR\}}{2} \N{\nabla v}^2_{L^2(B_R)} +\frac{n_{\max}}{2} k^2\N{v}^2_{L^2(B_R)} \quad \tfa v\in H^1(B_R).
\eeq
\ele
The bound 
$\Cse \leq (
\min\{
\min\{ A_{\min}, \CTR\}\, , \, n_{\max}
\}
)^{-1}$
under the assumption \eqref{eq:lower_coercivity} then immediately follows from the Lax--Milgram lemma and the definition of $\Csol$ \eqref{eq_Csol_def}

\bpf[Proof of Lemma \ref{lem:coercivity}]
Using the definition of $b(\cdot,\cdot)$ \eqref{eq:sesqui} and the inequalities \eqref{eq:CDtN} and \eqref{eq:CPF}, we have
\begin{align*}
&\Re b(v,v) \geq A_{\min} \N{\nabla v}^2_{L^2(B_R)} - k^2 n_{\max} \N{v}^2_{L^2(B_R)} + R^{-1}\CTR \N{ v}^2_{L^2(\GR)},\\
& \geq \min\{ A_{\min}, \CTR\} \Big (\N{\nabla v}^2_{L^2(B_R)} + \frac{1}{R}\N{ v}^2_{L^2(\GR)}\Big) - k^2 n_{\max}  \N{v}^2_{L^2(B_R)},\\
& \geq  \frac{\min\{ A_{\min}, \CTR\}}{2}\Big(\N{\nabla v}^2_{L^2(B_R)} + \frac{1}{R}\N{ v}^2_{L^2(\GR)}\Big) + n_{\max} \left( - 1 + \frac{\min\{ A_{\min}, \CTR\}}{2n_{\max} (kR)^2 \CPF}\right)k^2 \N{v}^2_{L^2(B_R)} ,
\end{align*}
and the result \eqref{eq:coercivity} follows.
\epf

\section*{Acknowledgements} 

The authors thank Kirill Cherednichenko (University of Bath),
Daniel Peterseim (Universit\"at Augsburg), and Barbara Verf\"urth (KIT)
for useful discussions about the homogenisation literature.
The authors also thank the anonymous referees for their constructive comments on the paper.
EAS was supported by EPSRC grant EP/R005591/1. 

\bibliographystyle{siamplain}
\bibliography{bibliography}

\end{document}